\documentclass{amsart}
\numberwithin{equation}{section}
\usepackage[english]{babel}
\usepackage[T1]{fontenc}
\usepackage[latin1]{inputenc}
\usepackage{indentfirst}
\usepackage{amsmath,amssymb, amsbsy}
\usepackage{amsfonts}
\usepackage{latexsym}
\usepackage{amsthm}
\usepackage[dvips]{graphicx}
\DeclareGraphicsExtensions{.pdf,.png,.jpg,.eps}
\usepackage[usenames,dvipsnames]{color}

\usepackage{newtxtext,newtxmath,helvet,courier}
\usepackage{fix-cm}

\usepackage{tikz}

\usepackage{color}
\usepackage[latin1]{inputenc}
\usepackage[active]{srcltx}
\setlength{\topmargin}{16pt} \setlength{\headheight}{20pt}
\setlength{\headsep}{30pt}
\setlength{\textwidth}{15cm}
\setlength{\textheight}{19cm}
\setlength{\oddsidemargin}{1cm} 
\setlength{\evensidemargin}{1cm} 
\definecolor{navyblue}{RGB}{0, 25, 111}

\newcommand{\brd}[1]{\mathbb{#1}}
\newcommand{\R}{\brd{R}}
\newcommand{\abs}[1]{\left\lvert {#1} \right\rvert}
\newcommand{\norm}[2]{\left\Vert {#1} \right\Vert_{#2}}
\newcommand\ddfrac[2]{\frac{\displaystyle #1}{\displaystyle #2}}
\newtheorem{teo}{Theorem}[section]
\newtheorem{Corollary}[teo]{Corollary}
\newtheorem{Lemma}[teo]{Lemma}
\newtheorem{Theorem}[teo]{Theorem}
\newtheorem{Proposition}[teo]{Proposition}
\theoremstyle{definition}
\newtheorem{Definition}[teo]{Definition}

\newtheorem{remark}[teo]{Remark}
\newtheorem{problem}[teo]{Problem}

\begin{document}

\title[On $s$-harmonic functions on cones]
{On S-harmonic functions on cones}
\author{Susanna Terracini}
\author{Giorgio Tortone}
\author{Stefano Vita}

\date{\today}

\address{Susanna Terracini, Giorgio Tortone and Stefano Vita \newline \indent
 Dipartimento di Matematica ``Giuseppe Peano'', \newline \indent
Universit\`a di Torino, \newline \indent
Via Carlo Alberto, 10,
10123 Torino, Italy}
\email{susanna.terracini@unito.it}
\email{giorgio.tortone@unito.it}
\email{stefano.vita@unito.it}

\keywords{Fractional Laplacian, conic functions, asymptotic behaviour, Martin kernel}
\subjclass{35R11 (35B45,35B08)}

\thanks{{\bf Acknowlegments.}   Work partially supported by the
ERC Advanced Grant 2013 n. 339958
{\it Complex Patterns for Strongly Interacting Dynamical Systems - COMPAT} and by the INDAM-GNAMPA project {\it Aspetti non-locali in fenomeni di segregazione}. The authors wish to thank Alessandro Zilio for many fruitful conversations.
}

\maketitle

\begin{abstract}
We deal with non negative functions satisfying

\begin{equation*}\label{P_C}
\begin{cases}
(-\Delta)^s u_s=0 & \mathrm{in}\quad C, \\
u_s=0 & \mathrm{in}\quad \mathbb{R}^n\setminus C,
\end{cases}
\end{equation*}

where $s\in(0,1)$ and $C$ is a given cone on $\mathbb R^n$ with vertex at zero. We consider the case when $s$ approaches $1$,  wondering whether solutions of the problem do converge to harmonic functions in the same cone or not. Surprisingly, the answer will depend on the opening of the cone through an auxiliary eigenvalue problem on the upper half sphere.
These conic functions are involved in the study of the nodal regions in the case of optimal partitions and other free boundary problems and play a crucial role in the extension of the Alt-Caffarelli-Friedman monotonicity formula to the case of fractional diffusions.

\end{abstract}

\tableofcontents

\section{Introduction}
Let $n\geq 2$ and let $C$ be an open cone in $\mathbb{R}^n$ with vertex in $0$;
for a given $s\in(0,1)$, we consider the problem of the classification of nontrivial functions which are $s$-harmonic inside the cone and vanish identically outside, that is:
\begin{equation}\label{Pstheta}
\begin{cases}
(-\Delta)^s u_s=0 & \mathrm{in}\quad C, \\
u_s\geq 0 & \mathrm{in}\quad \mathbb{R}^n\\  u_s\equiv0 & \mathrm{in}\quad \mathbb{R}^n\setminus C.
\end{cases}
\end{equation}
Here we define (see \S\ref{sec:2} for the details)
$$
(-\Delta)^s u(x)=  C(n,s) \mbox{ P.V.}\int_{\R^n}{\frac{u(x)-u(\eta)}{\abs{x-\eta}^{n+2s}}\mathrm{d}\eta}\;,
$$
where  $u$ is a sufficiently smooth function and
\begin{equation}\label{eq:Cns}
C(n,s) = \frac{2^{2s}s\Gamma(\frac{n}{2}+s)}{\pi^{n/2}\Gamma(1-s)}>0,
\end{equation}
where
$$
\Gamma(x)=\int_{0}^\infty{t^{x-1}e^{-t}\mathrm{d}t}.
$$
The principal value is taken at $\eta=x$: hence, though $u$ needs not to decay at infinity, it has to keep an algebraic growth with a power strictly smaller than $2s$ in order to make the above expression meaningful. By Theorem 3.2 in \cite{banuelos}, it is known that there exists a homogeneous, nonnegative and nontrivial solution to \eqref{Pstheta} of the form
\begin{equation*}
u_s(x)=|x|^{\gamma_s}u_s\left(\frac{x}{|x|}\right),
\end{equation*}
where $\gamma_s:=\gamma_s(C)$ is a definite homogeneity degree (characteristic exponent), which depends on the cone. Moreover, such a solution is continuous in $\mathbb{R}^n$ and unique, up to multiplicative constants. We can normalize it in such a way that  $\norm{u_s}{L^\infty(S^{n-1})}=1$. We consider the case when $s$ approaches $1$,  wondering whether solutions of the problem do converge to a harmonic function in the same cone and, in case, which are the suitable spaces for convergence.

\bigskip

Such conic $s$-harmonic functions appear as limiting blow-up profiles and play a major role in many free boundary problems with fractional diffusions and in the study of the geometry of nodal sets, also in the case of partition problems (see, e.g. \cite{MR3039830,BaFiRo, CafSilvaSavin, allfunctions,GaRo}). Moreover, as we shall see later, they are strongly involved with the possible extensions of the Alt-Caffarelli-Friedman monotonicity formula to the case of fractional diffusion. The study of their properties and, ultimately,  their classification is therefore a major achievement in this setting.  The problem of homogeneous $s$-harmonic functions on cones has been deeply studied in \cite{banuelos,MR1671973, Bogdan.narrow,  MR2213639}. The present paper mainly focuses on the limiting behaviour as $s\nearrow 1$.

\bigskip
Our problem \eqref{Pstheta} can be linked to a specific spectral problem of local nature in the upper half sphere; indeed let us look at the extension technique popularized by Caffarelli and Silvestre (see \cite{MR2354493}), characterizing the fractional Laplacian in $\mathbb{R}^n$ as the Dirichlet-to-Neumann map for a variable $v$ depending on one more space dimension and satisfying:
\begin{equation}\label{eq:Pextended}
\begin{cases}
L_s v=\mathrm{div}(y^{1-2s}\nabla v)=0\qquad &\mathrm{in}\quad\mathbb{R}^{n+1}_+,\\
v(x,0)=u(x) & \mathrm{on}\quad\mathbb{R}^{n}\;.
\end{cases}
\end{equation}
Such an extension exists unique for a suitable class of functions $u$ (see \eqref{norml1s}) and it is given by the formula:
\begin{equation*}
v(x,y)=\gamma(n,s)\int_{\R^n} \frac{y^{2s}u(\eta)}{(|x-\eta|^2+y^2)^{n/2+s}} \mathrm{d}\eta \qquad \mathrm{where}\; \gamma(n,s)^{-1}:=\int_{\R^n} \frac{1}{(|\eta|^2+1)^{n/2+s}} \mathrm{d}\eta\;.
\end{equation*}
Then, the nonlocal original operator translates into a boundary derivative operator of Neumann type:
$$
-\dfrac{C(n,s)}{\gamma(n,s)}\lim_{y\to0}y^{1-2s}\partial_yv(x,y)=(-\Delta)^su(x).$$
Now, let us consider an open region $\omega\subseteq S^{n-1}=\partial S^n_+$, with $S^{n}_+= S^{n}\cap \{ y>0\}$, and define the eigenvalue
$$
\lambda_1^s(\omega)=\inf\left\{\ddfrac{\int_{S^n_+}y^{1-2s}|\nabla_{S^{n}}u|^2\mathrm{d}\sigma}{\int_{S^n_+}y^{1-2s}u^2\mathrm{d}\sigma} \ : \ u\in H^1(S^n_+;y^{1-2s}\mathrm{d}\sigma)\setminus\{0\} \ \mathrm{and} \ u\equiv0 \ \mathrm{in} \ S^{n-1}\setminus\omega\right\}.
$$
Next, define the \emph{characteristic exponent} of the cone $C_\omega$ spanned by $\omega$ (see Definition \ref{spanned}) as
\begin{equation}\label{eq:characteristic}
\gamma_s(C_\omega)=\gamma_s(\lambda_1^s(\omega))\;,
\end{equation}
where the function $\gamma_s(t)$ is defined by
$$
\gamma_s(t):=\sqrt{\bigg(\frac{n-2s}{2}\bigg)^2 +t}-\frac{n-2s}{2}\;.$$

\begin{remark}
There is a remarkable link between the nonnegative $\lambda_1^s(\omega)$-eigenfunctions  and the $\gamma_s(\lambda_1^s(\omega))$-homogeneous $L_s$-harmonic functions: let
consider the spherical coordinates $(r,\theta)$ with $r>0$ and $\theta\in S^{n}$. Let $\varphi_s$ be the first nonnegative eigenfunction to $\lambda_1^s(\omega)$ and let $v_s$ be its $\gamma_s(\lambda_1^s(\omega))$-homogeneous extension to $\mathbb{R}^{n+1}_+$, i.e.
$$
v_s(r,\theta)= r^{\gamma_s(\lambda_1^s(\omega))}\varphi_s(\theta),
$$
which is well defined as soon as
\(\gamma_s(\lambda_1^s(\omega))<2s\) (as we shall see, this fact is always granted). By \cite{spectralLs}, the operator $L_s$ can be decomposed as
$$
L_s u=\sin^{1-2s}(\theta_n) \frac{1}{r^n}\partial_r \left(r^{n+1+2s}\partial_r u\right) + \frac{1}{r^{1+2s}} L_s^{S^n} u
$$
where $y=r \sin(\theta_n)$ and the Laplace-Beltrami type operator is defined as
$$
L_s^{S^n} u = \mbox{div}_{S^n}(\sin^{1-2s}(\theta_n) \nabla_{S^n} u)
$$
with $\nabla_{S^n}$ the tangential gradient on $S^n$. Then, we easily get that $v_s$ is  $L_s$-harmonic in the upper half-space; moreover its trace $u_s(x)=v_s(x,0)$ is $s$-harmonic in the cone $C_\omega$ spanned by $\omega$, vanishing identically outside: in other words $u_s$ is a solution of our problem \eqref{Pstheta}.
\end{remark}

In a symmetric way, for the standard Laplacian, we consider the problem of $\gamma$-homogeneous functions which are harmonic inside the cone spanned by $\omega$ and vanish outside:
\begin{equation}\label{P1theta}
\begin{cases}
-\Delta u_1=0 & \mathrm{in}\quad C_\omega, \\
u_1\geq0 & \mathrm{in}\quad \mathbb{R}^n\\
u_1=0 & \mathrm{in}\quad \mathbb{R}^n\setminus C_\omega.
\end{cases}
\end{equation}
Is is well known that the associated eigenvalue problem on the sphere is that of the Laplace-Beltrami operator with Dirichlet boundary conditions:

$$
\lambda_1(\omega)=\inf\left\{\ddfrac{\int_{S^{n-1}}|\nabla_{S^{{n-1}}}u|^2\mathrm{d}\sigma}{\int_{S^{n-1}}u^2\mathrm{d}\sigma} \ : \ u\in H^1(S^{n-1})\setminus\{0\} \ \mathrm{and} \ u=0 \ \mathrm{in} \ S^{n-1}\setminus\omega\right\},
$$
and the \emph{characteristic exponent} of the cone $C_\omega$ is
\begin{equation}\label{eq:1characteristic}
\gamma(C_\omega)=\sqrt{\bigg(\frac{n-2}{2}\bigg)^2 +\lambda_1(\omega)}-\frac{n-2}{2}={\gamma_s}_{\vert s=1}(\lambda_1(\omega))\;.
\end{equation}
In the classical case, the characteristic exponent enjoys a number of nice properties: it is minimal on spherical caps among sets having a given measure. Moreover for the spherical caps, the eigenvalues enjoy  a fundamental convexity property with respect to the colatitude $\theta$ (\cite{MR732100,MR0412442}).  The convexity plays a major role in the proof of the Alt-Caffarelli-Friedman monotonicity formula, a key tool in the Free Boundary Theory (\cite{MR2145284}).

\bigskip

Since the standard Laplacian can be viewed as the limiting operator of the family $(-\Delta)^s$ as $s\nearrow 1$, some questions naturally arise:
\begin{problem} Is it true that
\begin{itemize}
\item[(a)] \( \lim_{s\to 1}\gamma_s(C)=\gamma(C)\)?
\item[(b)] \( \lim_{s\to 1}u_s=u_1\) uniformly on compact sets, or better, in H\"older local norms?
\item[(c)] for spherical caps of opening $\theta$ is there any convexity of the map $\theta\mapsto \lambda_1^s(\theta)$ at least, for $s$ near $1$?
\end{itemize}
\end{problem}

We therefore addressed the problem of the asymptotic behavior of the solutions of problem \eqref{Pstheta} for $s\nearrow 1$, obtaining a rather unexpected result: our analysis shows high sensitivity to the opening solid angle $\omega$ of the cone $C_\omega$, as evaluated by the value of $\gamma(C)$.  In the case of wide cones, when $\gamma(C)<2$ (that is, $\theta\in(\pi/4,\pi)$ for spherical caps of colatitude $\theta$), our solutions do converge to the harmonic homogeneous function of the cone; instead, in the case of narrow cones, when $\gamma(C)\geq2$ (that is, $\theta\in(0,\pi/4]$ for spherical caps),
then limit of the homogeneity degree will be always two and the limiting profile will be something different, though related, of course, through a correction term.  Similar transition phenomena have been detected in other contexts for some types of free boundary problems on cones (\cite{AlCh2015, Sha2004}). As a consequence of our main result, we will see a lack of convexity of the eigenvalue as a function of the colatitude. Our main result is the following Theorem.
\begin{Theorem}\label{teolimit1}
Let $C$ be an open cone with vertex at the origin. There exist finite the following limits:

\begin{equation*}\label{limit.mu1}
\overline\gamma(C):=\lim_{s\to 1^-}\gamma_s(C)=\min\{\gamma(C),2\}
\end{equation*}
and
\begin{equation*}\label{limit.mu2}
\mu(C):=\lim_{s\to 1^-}\frac{C(n,s)}{2s-\gamma_s(C)}=\begin{cases}
0 & \mathrm{if} \  \gamma(C)\leq2,\\
\mu_0(C) & \mathrm{if} \  \gamma(C)\geq2,
\end{cases}
\end{equation*}
where $C(n,s)$ is defined in \eqref{eq:Cns} and
\begin{equation*}\label{mu012}
\mu_0(C):=\inf\left\{\ddfrac{\int_{S^{n-1}}\abs{\nabla_{S^{n-1}}u}^2-2nu^2\mathrm{d}\sigma}{\left(\int_{S^{n-1}}|u|\mathrm{d}\sigma\right)^2} \ : \ u\in H^1(S^{n-1})\setminus\{0\} \ \mathrm{and} \ u=0 \ \mathrm{in} \ S^{n-1}\setminus C\right\}.
\end{equation*}
Let us consider the family $(u_{s})$ of nonnegative solutions to \eqref{Pstheta} such that  $||u_{s}||_{L^\infty(S^{n-1})}=1$. Then, as $s\nearrow 1$, up to a subsequence, we have
\begin{itemize}
\item[1.] $u_{s}\to\overline u$ in $L^2_{\mathrm{loc}}(\mathbb{R}^n)$ to some $\overline u\in H^1_{\mathrm{loc}}(\mathbb{R}^n)\cap L^\infty(S^{n-1})$.
\item[2.] The convergence is uniform on compact subsets of $C$ , $\overline u$ is nontrivial with $||\overline u||_{L^\infty(S^{n-1})}=1$ and is $\overline\gamma(C)$-homogeneous.
\item[3.] The limit $\overline u$ solves
\begin{equation}
\begin{cases}\label{limit.equation1}
-\Delta\overline u=\mu(C)\displaystyle\int_{S^{n-1}}{\!\!\!\!\overline u \mathrm{d}\sigma}& \mathrm{in} \ C,\\
\overline u=0 & \mathrm{in} \ \mathbb{R}^n\setminus C\;.
\end{cases}
\end{equation}

\end{itemize}
\end{Theorem}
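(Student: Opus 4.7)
The strategy combines a spectral analysis of the weighted eigenvalue $\lambda_1^s(\omega)$ with a splitting of the nonlocal equation into a local and a ``cone exterior'' piece, exploiting the $\gamma_s$-homogeneity of $u_s$. For the asymptotics, I first note that $\gamma_s(C) = \gamma_s(\lambda_1^s(\omega))$; the admissibility constraint $\gamma_s<2s$ is equivalent to $\lambda_1^s(\omega) < 2sn$, producing the cap at $2n$ as $s\to 1^-$. Matching lower bounds, via Dirichlet eigenfunctions on $\omega$ extended along the $y$-direction and a $\Gamma$-convergence-type argument that handles the degeneration $y^{1-2s}\to y^{-1}$, show $\lambda_1^s(\omega) \to \min\{\lambda_1(\omega),2n\}$, hence $\bar\gamma(C) = \min\{\gamma(C),2\}$. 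For $\mu(C)$, using $C(n,s)\sim c_n(1-s)$ and the elementary expansion $2s-\gamma_s(C) \approx (2sn - \lambda_1^s(\omega))/(n+2s)$, one obtains $\mu(C)=0$ in the wide case; in the narrow case both numerator and denominator vanish, and the rate is identified by matching upper and lower variational bounds for $2sn-\lambda_1^s(\omega)$ against $C(n,s)/\mu_0(C)$, giving $\mu(C) = \mu_0(C)$.

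For the compactness of $(u_s)$: from $\|u_s\|_{L^\infty(S^{n-1})}=1$ and $\gamma_s$-homogeneity, $|u_s(x)|\leq |x|^{\gamma_s}$ globally. I would use uniform interior regularity estimates (valid for $s\in[s_0,1)$) for the extension problem $L_s v_s = 0$ to obtain equicontinuity on compact subsets of $C$, and combined with the growth bound, extract a subsequence converging uniformly on compacts of $C$ and in $L^2_{\mathrm{loc}}(\mathbb{R}^n)$. The limit $\bar u$ inherits nonnegativity, vanishing outside $C$, $\bar\gamma(C)$-homogeneity, and (via uniform boundary behavior near $\partial C$) the normalization $\|\bar u\|_{L^\infty(S^{n-1})}=1$, so in particular $\bar u$ is nontrivial.

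For the limit equation, write, for $x$ interior to $C$,
\[
0 = (-\Delta)^s u_s(x) = J_s(x) + C(n,s)\, u_s(x) \int_{\mathbb{R}^n \setminus C} |x-\eta|^{-n-2s}\,d\eta,
\]
with $J_s(x) := C(n,s)\,\mathrm{P.V.}\int_C \frac{u_s(x)-u_s(\eta)}{|x-\eta|^{n+2s}}\,d\eta$. The exterior term vanishes in the limit since $C(n,s)\to 0$ and the outer integral has a finite limit. I then split $J_s$ into an integral over $C\cap B_R(x)$ (yielding $-\Delta \bar u(x)$ by classical fractional-to-local convergence) plus a far tail; by $\gamma_s$-homogeneity and the scaling $\eta = r\theta$, the leading contribution of the tail (coming from the $-u_s(\eta)$ part) is $-C(n,s)\,R^{\gamma_s-2s}/(2s-\gamma_s)\int_{S^{n-1}} u_s\,d\sigma$, which tends to $-\mu(C)\int_{S^{n-1}}\bar u\,d\sigma$ since $R^{\gamma_s-2s}\to 1$ in the narrow regime. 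Assembling all pieces yields $-\Delta \bar u = \mu(C)\int_{S^{n-1}}\bar u\,d\sigma$ in $C$. The main obstacle is the two-sided matching identifying $\mu(C)=\mu_0(C)$ in the narrow regime together with the parallel tail estimate in the equation, both relying on a delicate cancellation between the vanishing constant $C(n,s)$ and the saturation of $\gamma_s$ toward $2s$.
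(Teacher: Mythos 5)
Your overall architecture (compactness of $(u_s)$, explicit homogeneity computation of the tail, limiting equation with the coefficient $\mu(C)$) parallels the paper, but the core of the theorem is precisely the quantitative control of $2s-\gamma_s(C)$ (equivalently of $2sn-\lambda_1^s(\omega)$) in the narrow regime, and your proposal asserts this rather than proves it. From $\gamma_s<2s$ you only get the trivial cap $\limsup_s\lambda_1^s(\omega)\le 2sn$; the statement $\lambda_1^s(\omega)\to\min\{\lambda_1(\omega),2n\}$ — in particular that narrow cones actually saturate the cap and wide cones do not lose anything in the degenerate limit $y^{1-2s}\to y^{-1}$ — is exactly the surprising content of the theorem, and ``a $\Gamma$-convergence-type argument'' is a placeholder: nothing you wrote excludes $\liminf_s\lambda_1^s$ dropping below $\min\{\lambda_1(\omega),2n\}$, nor explains where the value $2n$ is produced by the degenerating weighted Rayleigh quotient. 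Worse, the identification $\mu(C)=\mu_0(C)$ requires the two-sided asymptotic $2sn-\lambda_1^s(\omega)\sim (n+2s)\,C(n,s)/\mu_0(C)$, and you give no mechanism for either inequality. In the paper the upper bound on $\mu$ (i.e.\ the lower bound on $2s-\gamma_s$) is obtained only through an explicit barrier: the supersolution $r^{\gamma_s^*}v$ with $\gamma_s^*=2s-s\,C(n,s)/\mu_0(\theta)$ on small spherical caps (Theorem \ref{teobarriera}), combined with the distributional semigroup property and a comparison argument (Corollary \ref{corollmu}), then inclusion for general cones; the exact constant $\mu(C)=\mu_0(C)$ is only recovered \emph{a posteriori}, by deriving the limit equation and testing it against the minimizer $\psi$ of \eqref{mu01} (and against the first Dirichlet eigenfunction of $S^{n-1}\cap C$ to get $\overline\gamma=\min\{\gamma,2\}$). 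Without a substitute for this barrier/comparison step your scheme cannot even guarantee that $\mu(C)$ is finite, which you implicitly use when passing to the limit in the tail term $C(n,s)R^{\gamma_s-2s}(2s-\gamma_s)^{-1}\int_{S^{n-1}}u_s\,\mathrm{d}\sigma$.

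There are also two secondary gaps. Your pointwise limit $C(n,s)\,\mathrm{P.V.}\int_{C\cap B_R(x)}(u_s(x)-u_s(\eta))|x-\eta|^{-n-2s}\mathrm{d}\eta\to-\Delta\overline u(x)$ needs interior estimates of second order for $u_s$ that are uniform as $s\nearrow1$ (the functions change with $s$); the paper avoids this entirely by working in distributional form, moving $(-\Delta)^s$ onto a fixed $\varphi\in C^\infty_c(C)$, so that only $L^2_{\mathrm{loc}}$ convergence of $u_s$ is needed, while the tail is computed by the same homogeneity argument you sketch. Finally, item 1 of the statement asserts $\overline u\in H^1_{\mathrm{loc}}(\R^n)$, i.e.\ Sobolev regularity across $\partial C$; interior equicontinuity plus the growth bound $|u_s|\le|x|^{\gamma_s}$ does not give this. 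The paper obtains it from the uniform $H^s$ bound of Proposition \ref{propHs} — whose constant again involves $C(n,s)/(2s-\gamma_s)$, hence once more the finiteness of $\mu(C)$ — together with the Bourgain--Brezis--Mironescu type compactness of Proposition \ref{Brezis}.
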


\begin{figure}
\begin{tikzpicture}

          \draw[->] (0,0) -- (6.5,0) node[right] {$\theta$};
          \draw[->] (0,0) -- (0,5) node[above] {$y$};
          \draw	(0,0) node[anchor=north] {0}
		(pi/2,0) node[anchor=north] {$\pi/4$}
		(pi,0) node[anchor=north] {$\pi/2$}
		(3*pi/2,0) node[anchor=north] {$3\pi/4$}
		(2*pi,0) node[anchor=north] {$\pi$}
		(0,2.2) node[anchor=east] {$2$}
		(0,1.8) node[anchor=east] {$2s$}
		(0,0.85) node[anchor=east] {$s$};
		
	\draw(2.7,2.2) node{$y=\gamma(\lambda_1(\theta))$}
		(1.1,1.2) node{$y=\gamma_s(\lambda_1^s(\theta))$};

          \draw [domain=0:pi/2,thick, dashed] plot (\x,2);
          \draw [domain=pi/2:2*pi,red,thick] plot (\x,pi/\x);
          \draw [domain=0.65:pi/2,red,thick] plot (\x,pi/\x);
          \draw [domain=0:pi,dashed] plot (\x,0.85);

          \draw [cyan,thick] plot [smooth, tension=1] coordinates { (0,1.8) (pi/2,1.6) (pi,0.85) (2*pi,0.35)};
         \end{tikzpicture}

             \caption{Characteristic exponents of spherical caps of aperture $2\theta$ for $s<1$ and $s=1$.}

         \end{figure}
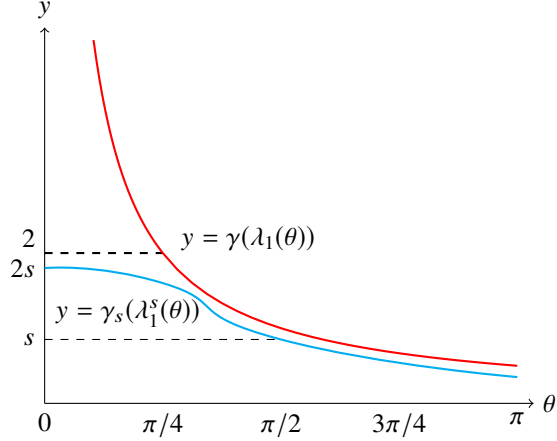
\begin{remark}
 Uniqueness of the limit $\overline u$ and therefore existence of the limit of $u_s$ as $s\nearrow 1$ holds in the case of connected cones and, in any case,  whenever $\gamma(C)>2$. We will see in Remark \ref{rem:uniqueness} that under symmetry assumptions on the cone $C$, the limit function $\overline{u}$ is unique and hence it does not depend on the choice of the subsequence. \\
  \end{remark}
    A nontrivial improvement of the main Theorem concerns uniform bounds in H\"older spaces holding uniformly for $s\to 1$.
\begin{Theorem}\label{thm:holder}
Assume the cone is $\mathcal C^{1,1}$. Let $\alpha\in(0,1)$, $s_0\in(\max\{1/2,\alpha\},1)$ and $A$ an annulus centered at zero. Then the family of solutions $u_s$ to \eqref{Pstheta} is uniformly bounded in $C^{0,\alpha}(A)$ for any $s\in[s_0,1)$.
\end{Theorem}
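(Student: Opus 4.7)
Plan. The argument combines three ingredients: uniform $L^\infty$ control from the normalization $\norm{u_s}{L^\infty(S^{n-1})}=1$ and the $\gamma_s(C)$-homogeneity; interior H\"older regularity for bounded $s$-harmonic functions; and a uniform-in-$s$ boundary estimate near $\partial C\cap A$ that exploits the $\mathcal C^{1,1}$ regularity of the cone away from its vertex. By homogeneity one has $u_s(x)\leq |x|^{\gamma_s(C)}\leq C(A)$ on any annulus $A$, uniformly in $s\in[s_0,1)$, because $\gamma_s(C)\to\overline\gamma(C)\leq 2$ by Theorem~\ref{teolimit1}; moreover the weighted tail $\int_{\R^n} u_s(x)(1+|x|)^{-n-2s}\,\mathrm{d}x$ stays uniformly finite since $\gamma_s(C)<2s$ holds throughout.

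On any compact subset of $A\cap C$ separated from $\partial C$, I would obtain $C^{0,\alpha}$ bounds uniform in $s$ from the classical interior H\"older estimates applied either to $(-\Delta)^s u_s=0$ directly or, equivalently, to the Caffarelli--Silvestre extension $L_s v_s=0$. The relevant constants depend on the $A_2$-weight $y^{1-2s}$ continuously in $s$ and therefore remain bounded as $s$ ranges over the closed interval $[s_0,1]$.

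The delicate point is the behaviour near a boundary point $x_0\in\partial C\cap A$. Using the $\mathcal C^{1,1}$ regularity of $\partial C$ away from the vertex, I would locally straighten the boundary and compare $u_s$ from above with the one-dimensional $s$-harmonic profile $(x_1)_+^s$, which solves the analogous problem on a half-space. Together with a matching lower barrier, this would yield a bound $u_s(x)\leq C\,\mathrm{dist}(x,\partial C)^s$ with $C$ independent of $s\in[s_0,1)$. Combined with the interior estimates, this upgrades to a uniform $C^{0,s}$ bound in $A$, and since $s\geq s_0>\alpha$ a trivial interpolation against the uniform $L^\infty$ bound gives the desired uniform $C^{0,\alpha}(A)$ estimate.

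The main obstacle is precisely the uniform-in-$s$ character of the boundary estimate: the boundary H\"older constants in the Ros-Oton--Serra framework carry an a priori $s$-dependence that could degenerate as $s\nearrow 1$. The natural remedy is a compactness/blow-up argument: if the uniform bound failed along a sequence $s_k$, a suitable rescaling of $u_{s_k}$ would, up to a subsequence, converge either to an $s$-harmonic function in a half-space (when $s_k\to\overline s<1$) or to a solution of the limit equation~\eqref{limit.equation1} in a half-space (when $s_k\to 1$, invoking Theorem~\ref{teolimit1} to control the limit). In both cases the $\mathcal C^{1,1}$ regularity of the straightened boundary, together with the known boundary regularity of the limit profile, rules out concentration of the H\"older seminorm and yields the required contradiction.
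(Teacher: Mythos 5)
Your overall strategy, namely reducing the uniform bound to a compactness/blow-up argument when the H\"older seminorm degenerates, is in the same spirit as the paper, which argues by contradiction: via homogeneity (Lemma \ref{lemmanello}) the seminorm on the annulus is controlled by the seminorm $L_s$ on $S^{n-1}$; one then picks $x_k,y_k\in S^{n-1}$ realizing $L_{s_k}=L_k\to\infty$, sets $r_k=|x_k-y_k|\to0$, and blows up $u^k(x)=\bigl(u_{s_k}(x_k+r_kx)-u_{s_k}(x_k)\bigr)/(r_k^\alpha L_k)$. However, your proposal has two genuine gaps. First, the identification of the blow-up limit when $s_k\to1$ is wrong: it is not a solution of \eqref{limit.equation1} in a half-space, and Theorem \ref{teolimit1} is not what controls it. Because of the normalization by $r_k^\alpha L_k$ with $L_k\to\infty$ and the zoom at scale $r_k\to0$, all zero-order and far-field contributions vanish, and the limit is a \emph{harmonic} function in $\mathbb{R}^n$ or in a half-space with zero boundary data. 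The quantitative input actually needed from the earlier analysis is the finiteness of $\mu(C)=\lim_{s\to1} C(n,s)/(2s-\gamma_s(C))$ together with $C(n,s_k)\to0$: this is exactly what makes the tail term $\int_{\mathbb{R}^n\setminus B_R}u^k(-\Delta)^{s_k}\phi\,\mathrm{d}x$ go to zero. Your ``uniformly finite weighted tail'' $\int u_s(1+|x|)^{-n-2s}\mathrm{d}x$ is not the right quantity here; without tracking $C(n,s)/(2s-\gamma_s)$ the tail estimate does not close. Second, the contradiction is not articulated: ``boundary regularity of the limit profile rules out concentration'' does not conclude anything. What is needed is a Liouville-type statement: the blow-up limit is globally $\alpha$-H\"older, non-constant (since $u^k(0)=0$ and $u^k((y_k-x_k)/r_k)=1$ pass to the limit), and harmonic in $\mathbb{R}^n$, or harmonic in a half-space with zero boundary values and hence, after odd reflection, in $\mathbb{R}^n$; such a function must be constant, which is the contradiction.

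Concerning your first, barrier-based route: the uniform-in-$s$ bound $u_s\le C\,\mathrm{dist}(\cdot,\partial C)^s$ with $C$ independent of $s$ is precisely the point you concede is unproven (the constants in the boundary Harnack/barrier arguments of the Ros-Oton--Serra type may a priori degenerate as $s\nearrow1$), so as written the argument is circular: the hard step is deferred to the same compactness machinery whose limit and contradiction are misidentified. The paper sidesteps this entirely: the only boundary information it uses is \eqref{c(s)original} with $s$-dependent constants, and uniformity in $s$ is recovered through homogeneity (reduction to the unit sphere) plus the blow-up and the Liouville theorem, with $\mu(C)<\infty$ as the key uniform ingredient. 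One genuine merit of your write-up is that you also consider the case $s_k\to\overline s<1$, which the paper's contradiction statement glosses over; but for the main case $s_k\to1$ your argument does not close.
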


\subsection{On the fractional Alt-Caffarelli-Friedman monotonicity formula}

In the case of reaction-diffusion systems with strong competition  between a number of densities which spread in space, one can observe a segregation phenomenon: as the interspecific competition rate grows, the populations tend to separate their supports in nodal sets, separated by a free boundary.  For the case of standard diffusion, both the asymptotic analysis and the properties of the segregated limiting profiles are fairly well understood, we refer to \cite{MR2393430, MR2146353,MR2863857, MR2599456,MR2984134} and references therein. Instead, when the diffusion is nonlocal and modeled by the fractional Laplacian, the only known results are contained in \cite{tvz2,tvz1,2016arXiv160403264T,ww}. As shown in \cite{tvz2,tvz1}, estimates in H\"older spaces can be obtained by the use of fractional versions of the Alt-Caffarelli-Friedman (ACF) and Almgren monotonicity formul\ae. For the statement, proof and applications of the original ACF monotonicity formula we refer to the book by Caffarelli and Salsa \cite{MR2145284} on free boundary problems. Let us state here the fractional version of the spectral problem beyond the ACF formula used in \cite{tvz2,tvz1}: consider the set of $2$-partitions of $S^{n-1}$ as
$$
\mathcal{P}^2:= \left\{(\omega_1,\omega_2): \omega_i\subseteq S^{n-1} \mbox{ open, }\omega_1\cap\omega_2 = \emptyset, \ \overline{\omega_1}\cup\overline{\omega_2}=S^{n-1}\right \}
$$
and define the  optimal partition value as:
\begin{equation}
\nu^{\textit{ACF}}_s := \frac{1}{2}\inf_{(\omega_1,\omega_2)\in\mathcal{P}^2}\sum_{i=1}^2 \gamma_s(\lambda_1^s(\omega_i)).
\end{equation}
It is easy to see, by a Schwarz symmetrization argument, that $\nu_s^{\textit{ACF}}$ is achieved by a pair of complementary spherical caps $(\omega_\theta,\omega_{\pi-\theta})\in\mathcal{P}^2$ with aperture $2\theta$ and $\theta\in(0,\pi)$ (for a detailed proof of this kind of symmetrization we refer to \cite{2016arXiv160403264T}), that is:
\begin{equation*}
\nu_s^{\textit{ACF}}=\min_{\theta\in[0,\pi]}\Gamma^s(\theta)=\min_{\theta\in[0,\pi]}\frac{\gamma_s(\theta)+\gamma_s(\pi-\theta)}{2}.
\end{equation*}
This gives a further motivation to our study of \eqref{Pstheta} for spherical caps. A classical result by Friedland and Hayman, \cite{MR0412442}, yields $ \nu^{\textit{ACF}}=1$ (case $s=1$), and the minimal value is achieved for two half spheres;  this equality is the core of the proof of the classical Alt-Caffarelli-Friedman monotonicity formula.

It was proved in \cite{tvz2} that $ \nu^{\textit{ACF}}_s$ is linked to the threshold for uniform bounds in H\"older norms for competition-diffusion systems, as the interspecific competition rate diverges to infinity, as well as the exponent of the optimal H\"older regularity for their limiting profiles. It was also conjectured that $ \nu^{\textit{ACF}}_s=s$ for every $s\in(0,1)$.
Unfortunately, the exact value of $\nu^{\textit{ACF}}_s$ is still unknown, and we only know that $0<\nu_s^{\textit{ACF}}\leq s$ (see \cite{tvz2,tvz1}). Actually one can easily give a better lower bound given by $\nu^{\textit{ACF}}_s\geq\max\{s/2,s-1/4\}$ when $n=2$ and $\nu^{\textit{ACF}}_s\geq s/2$ otherwise, which however it is not satisfactory.  As already remarked in \cite{MR3039830}, this lack of information implies also the lack of an exact Alt-Caffarelli-Friedman monotonicity formula for the case of fractional Laplacians.  Our contribution to this open problem is  a byproduct of the main Theorem \ref{limit.mu1}.
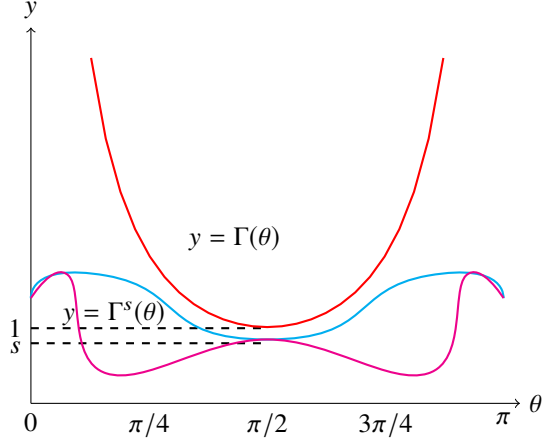
\begin{figure}
\begin{tikzpicture}

          \draw[->] (0,0) -- (6.5,0) node[right] {$\theta$};
          \draw[->] (0,0) -- (0,5) node[above] {$y$};
          \draw	(0,0) node[anchor=north] {0}
		(pi/2,0) node[anchor=north] {$\pi/4$}
		(pi,0) node[anchor=north] {$\pi/2$}
		(3*pi/2,0) node[anchor=north] {$3\pi/4$}
		(2*pi,0) node[anchor=north] {$\pi$}
		(0,1) node[anchor=east] {$1$}
		(0,0.75) node[anchor=east] {$s$};
		
	\draw(2.7,2.2) node{$y=\Gamma(\theta)$}
		(1.1,1.2) node{$y=\Gamma^s(\theta)$};

          \draw [domain=0:pi,thick, dashed] plot (\x,1);
          \draw [domain=0:pi,thick,dashed] plot (\x,0.8);
          \draw [domain=0.8:2*pi-0.8,red,thick] plot (\x,{-1.85+4.5/\x+4.5/(2*pi-\x)});
          \draw [cyan,thick] plot [smooth, tension=1.2] coordinates { (0,1.4) (1,1.7) (pi,0.85) (2*pi-1,1.7)  (2*pi,1.4)};
          \draw [magenta,thick] plot [smooth, tension=0.7] coordinates { (0,1.4) (0.5,1.7)(1,0.4) (pi,0.85) (2*pi-1,0.4)(2*pi-0.5,1.7)  (2*pi,1.4)};

         \end{tikzpicture}

             \caption{Possible values of $\Gamma^s(\theta)=\Gamma^s(\omega_\theta,\omega_{\pi-\theta})$ for $s<1$ and $s=1$ and $n=2$.}

         \end{figure}

\begin{Corollary}\label{cor1} In any space dimension we have
\[
\lim_{s\to 1}\nu^{\textit{ACF}}_s=1\;.
\]
\end{Corollary}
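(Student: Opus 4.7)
The strategy is to match the already-known upper bound $\nu_s^{\textit{ACF}}\le s$ with a matching lower bound $\liminf_{s\to 1}\nu_s^{\textit{ACF}}\ge 1$, by coupling the cone-by-cone convergence in Theorem \ref{teolimit1} with the monotonicity of $\omega\mapsto\gamma_s(\omega)$ under inclusion and with the classical Friedland--Hayman inequality. The upper bound is immediate: since $(x_n)_+^s$ is $s$-harmonic on the upper half-space, $\gamma_s(\omega_{\pi/2})=s$, so testing the definition of $\nu_s^{\textit{ACF}}$ on the equatorial partition yields $\nu_s^{\textit{ACF}}\le s\to 1$.

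For the lower bound I argue by contradiction. Assume $\nu_{s_k}^{\textit{ACF}}\to \ell<1$ along some $s_k\nearrow 1$. By the Schwarz symmetrization recalled in the introduction, pick optimal spherical caps $\omega_{\theta_k}$ with
$$
2\nu_{s_k}^{\textit{ACF}}=\gamma_{s_k}(\omega_{\theta_k})+\gamma_{s_k}(\omega_{\pi-\theta_k}),
$$
and extract a subsequence so that $\theta_k\to\theta^\star\in[0,\pi]$. The key tool, immediate from the Rayleigh quotient defining $\lambda_1^s(\omega)$ (a wider cone allows more test functions), is that $\omega\mapsto\gamma_s(\omega)$ is monotone nonincreasing under inclusion. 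When $\theta^\star\in(0,\pi)$, sandwiching
$$
\gamma_{s_k}(\omega_{\theta^\star+\eta})\;\le\;\gamma_{s_k}(\omega_{\theta_k})\;\le\;\gamma_{s_k}(\omega_{\theta^\star-\eta})
$$
for each small $\eta>0$, applying Theorem \ref{teolimit1} to the two \emph{fixed} bounding cones as $k\to\infty$, and then letting $\eta\to 0^+$ (using continuity of $\theta\mapsto\gamma(\omega_\theta)$) gives
$$
\lim_k\gamma_{s_k}(\omega_{\theta_k})=\min\{\gamma(\omega_{\theta^\star}),2\},\qquad \lim_k\gamma_{s_k}(\omega_{\pi-\theta_k})=\min\{\gamma(\omega_{\pi-\theta^\star}),2\}.
$$
In the degenerate case $\theta^\star\in\{0,\pi\}$, say $\theta^\star=0$, the same monotonicity together with $\gamma(\omega_{\theta_0})\to+\infty$ as $\theta_0\to 0^+$ forces $\liminf_k\gamma_{s_k}(\omega_{\theta_k})\ge 2$, while the complementary summand is trivially nonnegative.

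In either case the Friedland--Hayman inequality \cite{MR0412442} asserts $\gamma(\omega_{\theta^\star})+\gamma(\omega_{\pi-\theta^\star})\ge 2$, and the elementary observation that $\min\{a,2\}+\min\{b,2\}\ge 2$ whenever $a,b\ge 0$ satisfy $a+b\ge 2$ then yields $2\ell\ge 2$, contradicting $\ell<1$. The step I expect to be the only genuinely delicate one is the upgrade from Theorem \ref{teolimit1} (which is pointwise in the cone) to convergence \emph{along} the moving minimizing sequence of caps; the inclusion-monotonicity of $\gamma_s$ is precisely what makes that upgrade automatic, so once the monotonicity is recorded the rest is routine.
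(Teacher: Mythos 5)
Your argument is correct, but it follows a genuinely different route from the paper. The paper's proof is a uniform-convergence argument: it uses the monotonicity of $s\mapsto\gamma_s(\theta)$ (Proposition \ref{prop1}), the continuity of $\theta\mapsto\gamma_s(\theta)$ on $[0,\pi]$ (including a continuous extension at $\theta=\pi$, which is slightly delicate when $n=2$, $s>\tfrac12$), and the continuity of the limit $\overline\gamma(\theta)=\min\{\gamma(\theta),2\}$, so that Dini's theorem upgrades the pointwise convergence of Theorem \ref{teolimit1} to uniform convergence of $\Gamma^s$ on $[0,\pi]$; then $\nu^{\textit{ACF}}_s=\min_\theta\Gamma^s(\theta)\to\min_\theta\overline\Gamma(\theta)=1$, the last identification resting on Friedland--Hayman. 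You instead keep only the pointwise statement of Theorem \ref{teolimit1} for \emph{fixed} caps and transfer it to the moving (near-)optimal caps by a compactness-plus-sandwich argument based on the inclusion monotonicity of $\omega\mapsto\gamma_s(\omega)$, which is immediate from the Rayleigh quotient; you then conclude with Friedland--Hayman together with the elementary observation that $a+b\ge 2$, $a,b\ge0$ imply $\min\{a,2\}+\min\{b,2\}\ge2$ (a step the paper uses implicitly when writing $\min_\theta\overline\Gamma=1$), and you treat the degenerate angles $\theta^\star\in\{0,\pi\}$ by hand using $\gamma(\omega_{\theta_0})\to\infty$ as $\theta_0\to0^+$. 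What each approach buys: the paper's Dini argument is shorter once continuity and $s$-monotonicity of $\gamma_s(\theta)$ are available, and it gives uniform convergence of $\Gamma^s$ on $[0,\pi]$ as a byproduct; your argument avoids both Dini and the continuity-in-$\theta$ of $\gamma_s$ for fixed $s$ (hence the endpoint extension issue), needing only inclusion monotonicity and continuity of the classical exponent $\theta\mapsto\gamma(\omega_\theta)$, at the price of a case analysis. Two minor points: the attainment of $\nu^{\textit{ACF}}_s$ by complementary caps that you invoke is the symmetrization statement quoted in the introduction (and near-minimizing caps would serve equally well), and your upper bound $\nu^{\textit{ACF}}_s\le s$ via $\gamma_s(\omega_{\pi/2})=s$ is the bound the paper already cites as known, so it can simply be quoted.
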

\bigskip
The paper is organized as follows. In Section 2 we introduce our setting and we state the relevant known properties of homogeneous $s$-harmonic functions on cones. After this, we will obtain local $C^{0,\alpha}$-estimates in compact subsets of $C$ and local $H^s$-estimates in compact subsets of $\mathbb{R}^n$ for solutions $u_s$ of \eqref{Pstheta}. We will see that an important quantity which appears in this estimates and plays a fundamental role is
$$
\frac{C(n,s)}{2s-\gamma_s(C)},
$$
where $C(n,s)>0$ is the normalization constant given in \eqref{eq:Cns}. It will be therefore very important to bound this quantity uniformly in $s$.
In Section 3 we analyze the asymptotic behaviour of $\gamma_s(C)$ as $s$ converges to $1$, in order to understand the quantities $\overline\gamma(C)$ and $\mu(C)$. To do this, we will establish a distributional semigroup property for the fractional Laplacian for functions which grow at infinity. In Section 4 we prove Theorem \ref{teolimit1} and Corollary \ref{cor1}. Eventually, in Section 5, we prove Theorem \ref{thm:holder}.

\section{Homogenous $s$-harmonic functions on cones}\label{sec:2}
In this section, we focus our attention on the local properties of homogeneous $s$-harmonic functions on \textit{regular cones}. Since in the next section we will study the behaviour of the characteristic exponent as $s$ approaches $1$, in this section we recall some known results related to the boundary behaviour of the solution of \eqref{Pstheta} restricted to the unitary sphere $S^{n-1}$ and some estimates of the H$\ddot{\mbox{o}}$lder and $H^s$ seminorm.
\begin{Definition}\label{spanned}
Let $\omega\subset S^{n-1}$ be an open set, that may be disconnected. We call \textit{unbounded cone} with vertex in $0$, spanned by $\omega$ the open set
\begin{equation*}
C_\omega=\{rx \ : \ r>0, \ x\in\omega\}.
\end{equation*}
Moreover we say that $C=C_\omega$ is narrow if $\gamma(C)\geq2$ and wide if $\gamma(C)<2$.
We call $C_\omega$ \textit{regular cone} if $\omega$ is connected and of class $C^{1,1}$. Let $\theta\in(0,\pi)$ and $\omega_\theta\subset S^{n-1}$ be an open spherical cap of colatitude $\theta$. Then we denote by $C_\theta=C_{\omega_\theta}$ the \textit{right circular cone} of aperture $2\theta$.
\end{Definition}
Hence, let $C$ be a fixed unbounded open cone in $\R^n$ with vertex in $0$ and consider
\begin{equation*}\label{Pstheta1}
\begin{cases}
(-\Delta)^s u_s=0 & \mathrm{in}\quad C, \\
u_s\equiv 0 & \mathrm{in}\quad \mathbb{R}^n\setminus C.
\end{cases}
\end{equation*}
with the condition $\norm{u_s}{L^\infty(S^{n-1})}=1$. By Theorem 3.2 in \cite{banuelos} there exists, up to a multiplicative constant, a unique nonnegative function $u_s$ smooth in $C$ and $\gamma_s(C)$-homogenous, i.e.
$$
u_s(x) = \abs{x}^{\gamma_s(C)}u_s\left(\frac{x}{\abs{x}}\right)
$$
where $\gamma_s(C) \in (0,2s)$.
As it is well know (see for example \cite{MR1671973,silvestrethesis}), the fractional Laplacian $(-\Delta)^s$ is a nonlocal operator well defined in the class of integrability $\mathcal{L}^1_s:=\mathcal{L}^1 \left({\mathrm{d}x}/{(1+|x|)^{n+2s}}\right)$, namely the normed space of all Borel functions $u$ satisfying
\begin{equation}\label{norml1s}
\norm{u}{\mathcal{L}^1_s}:=\int_{\R^n}{\frac{\abs{u(x)}}{(1+\abs{x})^{n+2s}} \mathrm{d}x}< +\infty.
\end{equation}
Hence, for every $u\in \mathcal{L}^1_s, \varepsilon>0$ and $x \in\R^n$ we define
$$
(-\Delta)^s_\varepsilon u(x) = C(n,s) \int_{\mathbb{R}^n\setminus B_\varepsilon(x)}{\frac{u(x)-u(y)}{\abs{x-y}^{n+2s}}\mathrm{d}y},
$$
where
$$
C(n,s) = \frac{2^{2s}s\Gamma(\frac{n}{2}+s)}{\pi^{n/2}\Gamma(1-s)} \in \left(0, 4\Gamma\left(\frac{n}{2}+1\right) \right].
$$
and we can consider the fractional Laplacian as the limit
$$
(-\Delta)^s u(x)= \lim_{\varepsilon \downarrow 0}(-\Delta)^s_\varepsilon u(x) = C(n,s) \mbox{ P.V.}\int_{\R^n}{\frac{u(x)-u(y)}{\abs{x-y}^{n+2s}}\mathrm{d}y}.
$$
We remark that $u\in\mathcal{L}^1_s$ is such that $u\in\mathcal{L}^1_{s+\delta}$ for any $\delta>0$, which will be an important tool in this section of the paper, in order to compute high order fractional Laplacians. Another definition of the fractional Laplacian, which can be constructed by a double change of variables as in \cite{guide}, is
$$
(-\Delta)^s u (x) = \frac{C(n,s)}{2} \int_{\R^n}{\frac{2u(x)-u(x+y)-u(x-y)}{\abs{y}^{n+2s}}\mathrm{d}y}
$$
which emphasize that given $u\in C^2(D)\cap \mathcal{L}^1_s$, we obtain that $x\mapsto (-\Delta)^s u(x)$ is a continuous and bounded function on $D$, for some bounded $D\subset \R^n$.\\\\
By \cite[Lemma 3.3]{MR2213639}, if we consider a regular unbounded cone $C$ symmetric with respect to a fixed axis, there exists two positive constant $c_1= c_1(n,s,C)$ and $c_2=c_2(n,s,C)$ such that
\begin{equation}\label{c(s)original}
c_1|x|^{\gamma_s-s}\mbox{dist}(x,\partial C)^s\leq u_s(x)\leq c_2|x|^{\gamma_s-s}\mbox{dist}(x,\partial C)^s
\end{equation}
for every $x \in C$. We remark that this result can be easily generalized to regular unbounded cones $C_\omega$ with $\omega\subset S^{n-1}$ which is a finite union of connected $C^{1,1}$ domain $\omega_i$, such that $\overline{\omega}_i \cup\overline{ \omega}_j = \emptyset$ for $i\neq j$, since the reasonings in \cite{MR2213639} rely on a Boundary Harnack principle and on sharp estimates for the Green function for bounded $C^{1,1}$ domain non necessary connected (for more details \cite{ChenSong}).\\\\
Through the paper we will call the coefficient of homogeneity $\gamma_s$ as "\emph{characteristic exponent}", since it is strictly related to an eigenvalue partition problem.\\
As we already mentioned, our solutions are smooth in the interior of the cone and locally $C^{0,s}$ near the boundary $\partial C\setminus \{0\}$ (see for example \cite{MR2213639}), but we need some quantitative estimates in order to better understand the dependence of the H$\ddot{\mbox{o}}$lder seminorm on the parameter $s\in (0,1)$. \\\\
Before showing the main result of H$\ddot{\mbox{o}}$lder regularity, we need the following estimates about the fractional Laplacian of smooth compactly supported functions: this result can be found in \cite[Lemma 3.5]{MR1671973} and \cite[Lemma 5.1]{concentration}, but here we compute the formula with a deep attention on the dependence of the constant with respect to $s\in (0,1)$.
\begin{Proposition}\label{labogdan}
Let $s\in(0,1)$ and $\varphi\in C^2_c(\mathbb{R}^n)$. Then
\begin{equation}
|(-\Delta)^s\varphi(x)|\leq\frac{c}{(1+|x|)^{n+2s}},\qquad\forall x\in\mathbb{R}^n,
\end{equation}
where the constant $c>0$ depends only on $n$ and the choice of $\varphi$.
\end{Proposition}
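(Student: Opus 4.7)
The plan is to treat two regions separately, exploiting different mechanisms for smallness. Fix $R>0$ with $\mathrm{supp}\,\varphi\subset B_R$, so that everything below depends only on $n$ and on $\varphi$.

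First, in the far-field regime $\abs{x}\geq 2R$, since $\varphi(x)=0$ the principal-value integral reduces to
$$
(-\Delta)^s\varphi(x)=-C(n,s)\int_{B_R}\frac{\varphi(y)}{\abs{x-y}^{n+2s}}\,\mathrm{d}y,
$$
and the inequality $\abs{x-y}\geq\abs{x}/2$ for $y\in B_R$ together with the uniform bound $C(n,s)\leq 4\Gamma(n/2+1)$ (recorded in the excerpt) gives
$$
\abs{(-\Delta)^s\varphi(x)}\leq\frac{c_1}{\abs{x}^{n+2s}}\leq\frac{c_1'}{(1+\abs{x})^{n+2s}},
$$
with $c_1,c_1'$ depending only on $n$, $R$, and $\norm{\varphi}{L^1}$.

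Second, in the near-field regime $\abs{x}<2R$, the target weight $(1+\abs{x})^{n+2s}$ is bounded above by $(1+2R)^{n+2}$ uniformly in $s$, so it suffices to show that $\abs{(-\Delta)^s\varphi(x)}$ is bounded uniformly in $s\in(0,1)$. Using the symmetric representation from the excerpt I split at $\abs{y}=1$: on $\{\abs{y}<1\}$ the Taylor bound $\abs{2\varphi(x)-\varphi(x+y)-\varphi(x-y)}\leq\norm{D^2\varphi}{\infty}\abs{y}^2$ produces a term of order $C(n,s)\norm{D^2\varphi}{\infty}/(2-2s)$, and on $\{\abs{y}\geq 1\}$ the crude bound $4\norm{\varphi}{\infty}$ produces a term of order $C(n,s)\norm{\varphi}{\infty}/(2s)$.

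The main, and really only, obstacle is that each of these two coefficients looks like it blows up at one of the endpoints $s\in\{0,1\}$. This is cured by the identity $(1-s)\Gamma(1-s)=\Gamma(2-s)$, which allows me to rewrite
$$
\frac{C(n,s)}{2-2s}=\frac{2^{2s}\,s\,\Gamma(n/2+s)}{2\pi^{n/2}\,\Gamma(2-s)},\qquad\frac{C(n,s)}{2s}=\frac{2^{2s}\,\Gamma(n/2+s)}{\pi^{n/2}\,\Gamma(1-s)}.
$$
Both right-hand sides are continuous on $[0,1]$ (the second even vanishes at $s=1$ because $\Gamma(1-s)$ has a simple pole there), hence bounded by a constant depending only on $n$. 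Combining this with the analysis of the two regimes yields the desired estimate, with constant depending only on $n$ and on $\norm{\varphi}{\infty},\norm{D^2\varphi}{\infty},\norm{\varphi}{L^1}$ and $R$, i.e.\ only on $n$ and $\varphi$.
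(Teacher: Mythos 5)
Your proof is correct and follows essentially the same route as the paper: the same split into a far-field region where $\varphi$ vanishes (giving the decay $|x|^{-n-2s}$) and a near-field region handled through the symmetric second-difference representation, with the potentially dangerous factors $C(n,s)/(1-s)$ and $C(n,s)/s$ absorbed by the behaviour of the normalization constant --- a point the paper uses implicitly and you make explicit via $(1-s)\Gamma(1-s)=\Gamma(2-s)$. The only blemish is cosmetic: your displayed expression for $C(n,s)/(2s)$ is off by a factor $2$, which has no effect on the uniform bound.
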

\proof
Let $K\subset\mathbb{R}^n$ be the compact support of $\varphi$ and $k=\max_{x\in K}|\varphi(x)|$. There exists $R>1$ such that $K\subset B_{R/2}(0)$.\\
Let $|x|>R$.
\begin{eqnarray*}
|(-\Delta)^s\varphi(x)|&=&\left|C(n,s)\int_{\mathbb{R}^n}\frac{\varphi(x)-\varphi(y)}{|x-y|^{n+2s}}\mathrm{d}y\right|=\left|C(n,s)\int_{K}\frac{\varphi(y)}{|x-y|^{n+2s}}\mathrm{d}y\right|\nonumber\\
&\leq&\frac{C(n,s)k}{|x|^{n+2s}}\int_{K}\frac{1}{(1-\left|\frac{y}{x}\right|)^{n+2s}}\mathrm{d}y\leq\frac{C(n,s)k2^{n+2s}|K|}{|x|^{n+2s}}\nonumber\\
&\leq&\frac{C(n,s)k2^{2(n+2s)}|K|}{(1+|x|)^{n+2s}}\leq\frac{c}{(1+|x|)^{n+2s}},
\end{eqnarray*}
where $c>0$ depends only on $n$ and the choice of $\varphi$.\\\\
Let now $|x|\leq R$. We use the fact that any derivative of $\varphi$ of first and second order is uniformly continuous in the compact set $K$ and the fact that in $B_R(0)$ the function $(1+|x|)^{n+2s}$ has maximum given by $(1+R)^{n+2s}$. Hence there exist $0<\delta<1$ and a constant $M>0$, both depending only on $n$ and the choice of $\varphi$ such that
\begin{equation*}
|\varphi(x+z)+\varphi(x-z)-2\varphi(x)|\leq M|z|^2\qquad\forall z\in B_\delta(0).
\end{equation*}
Hence
\begin{eqnarray*}
|(-\Delta)^s\varphi(x)|&=&\left|C(n,s)\int_{\mathbb{R}^n\setminus B_\delta(x)}\frac{\varphi(x)-\varphi(y)}{|x-y|^{n+2s}}\mathrm{d}y+C(n,s)\int_{B_\delta(x)}\frac{\varphi(x)-\varphi(y)}{|x-y|^{n+2s}}\mathrm{d}y\right|\nonumber\\
&\leq& 2kC(n,s)\int_{\mathbb{R}^n\setminus B_\delta(x)}\frac{1}{|x-y|^{n+2s}}\mathrm{d}y+\frac{C(n,s)}{2}\int_{B_\delta(0)}\frac{|\varphi(x+z)+\varphi(x-z)-2\varphi(x)|}{|z|^{n+2s}}\mathrm{d}z\nonumber\\
&\leq& 2kC(n,s)\omega_{n-1}\int_\delta^{+\infty}r^{-1-2s}\mathrm{d}r+\frac{C(n,s)\omega_{n-1}M}{2}\int_0^\delta r^{1-2s}\mathrm{d}r\nonumber\\
&=& \frac{kC(n,s)\omega_{n-1}}{s\delta^{2s}}+\frac{C(n,s)\omega_{n-1}M\delta^{2-2s}}{4(1-s)}\nonumber\\
&\leq& \frac{c}{\delta^2}+c= c\frac{(1+|x|)^{n+2s}}{(1+|x|)^{n+2s}}\leq \frac{c(1+R)^{n+2}}{(1+|x|)^{n+2s}}=\frac{c}{(1+|x|)^{n+2s}},
\end{eqnarray*}
where $c>0$ depends only on $n$ and the choice of $\varphi$. This concludes the proof.
\endproof
By the previous calculations we have also the following result.
\begin{remark}\label{labogdancns}
Let $s\in(0,1)$ and $\varphi\in C^2_c(\mathbb{R}^n)$. Then there exists a constant $c=c(n,\varphi)>0$ and a radius $R=R(\varphi)>0$ such that
\begin{equation}
|(-\Delta)^s\varphi(x)|\leq c\frac{C(n,s)}{(1+|x|)^{n+2s}},\qquad\forall x\in\mathbb{R}^n\setminus B_R(0).
\end{equation}
\end{remark}

The following result provides interior estimates for the H\"older norm of our solutions.
\begin{Proposition}\label{propcomp}
Let $C$ be a cone and $K\subset C$ be a compact set and $s_0\in(0,1)$. Then there exist a constant $c>0$ and $\overline\alpha\in(0,1)$, both dependent only on $s_0,K,n,C$, such that
\begin{equation*}
||u_s||_{C^{0,\alpha}(K)} \leq c\left(1+\frac{C(n,s)}{2s-\gamma_s(C)}\right),
\end{equation*}
for any $\alpha\in(0,\overline\alpha]$ and any $s\in[s_0,1)$.
\end{Proposition}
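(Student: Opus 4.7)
My plan is to reduce the estimate to an interior Hölder regularity statement on small balls compactly contained in $C$, where $u_s$ is smooth and $s$-harmonic, and then to control the resulting nonlocal tail using homogeneity. By compactness, $K$ can be covered by finitely many balls $B_{\rho/2}(x_i)$ with $\overline{B_{2\rho}(x_i)}\subset C$, so it suffices to establish the estimate on a single such ball, centered at some $x_0\in K$. The $\gamma_s(C)$-homogeneity of $u_s$, the normalization $\|u_s\|_{L^\infty(S^{n-1})}=1$, and the bound $\gamma_s(C)\in(0,2s)\subset(0,2)$ immediately give
\begin{equation*}
\|u_s\|_{L^\infty(B_{2\rho}(x_0))}\leq(|x_0|+2\rho)^{\gamma_s(C)}\leq M(K,C),
\end{equation*}
uniformly in $s\in[s_0,1)$.

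The key step is the standard localization $u_s=u_s^{(1)}+u_s^{(2)}$ with $u_s^{(1)}:=u_s\,\chi_{B_{2\rho}(x_0)}$ and $u_s^{(2)}:=u_s\,\chi_{\mathbb{R}^n\setminus B_{2\rho}(x_0)}$. Since $(-\Delta)^s u_s=0$ in $B_{2\rho}(x_0)$ and $u_s^{(2)}$ vanishes there, for every $x\in B_\rho(x_0)$
\begin{equation*}
(-\Delta)^s u_s^{(1)}(x)=-(-\Delta)^s u_s^{(2)}(x)=C(n,s)\int_{\mathbb{R}^n\setminus B_{2\rho}(x_0)}\frac{u_s(y)}{|x-y|^{n+2s}}\,\mathrm{d}y.
\end{equation*}
A direct radial computation based on the homogeneity bound $u_s(y)\leq|y|^{\gamma_s(C)}$ and on $\gamma_s(C)<2s$ then yields
\begin{equation*}
\int_{\mathbb{R}^n\setminus B_{2\rho}(x_0)}\frac{u_s(y)}{|x-y|^{n+2s}}\,\mathrm{d}y\leq c\int_{R_0}^{\infty}r^{\gamma_s(C)-2s-1}\,\mathrm{d}r+c\leq\frac{c'}{2s-\gamma_s(C)},
\end{equation*}
uniformly in $x\in B_\rho(x_0)$ and $s\in[s_0,1)$, so that $\|(-\Delta)^s u_s^{(1)}\|_{L^\infty(B_\rho(x_0))}\leq c\,C(n,s)/(2s-\gamma_s(C))$.

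Since $u_s^{(1)}$ is bounded with compact support, a Silvestre-type interior Hölder estimate for $(-\Delta)^s$ with bounded source provides $\overline\alpha=\overline\alpha(s_0,n,\rho)\in(0,1)$ and $c>0$ such that for every $\alpha\in(0,\overline\alpha]$,
\begin{equation*}
\|u_s^{(1)}\|_{C^{0,\alpha}(B_{\rho/2}(x_0))}\leq c\bigl(\|u_s^{(1)}\|_{L^\infty(\mathbb{R}^n)}+\|(-\Delta)^s u_s^{(1)}\|_{L^\infty(B_\rho(x_0))}\bigr).
\end{equation*}
Since $u_s\equiv u_s^{(1)}$ on $B_{\rho/2}(x_0)$, combining with the previous bounds and summing over the finite cover produces the claimed estimate.

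The main obstacle is to guarantee that this interior Hölder estimate holds with a Hölder exponent and multiplicative constant that are uniform in $s\in[s_0,1)$. The exponent is taken, e.g., $\overline\alpha<2s_0$, while uniformity of the constant requires tracking the dependence in the standard proof (for instance via the Poisson representation of $s$-harmonic functions in a ball, or equivalently via the Caffarelli-Silvestre extension and the De Giorgi-Nash-Moser theory for the $y^{1-2s}$-weighted degenerate-elliptic operator $L_s$, noting that the relevant $A_2$-norms of the weight remain bounded as $s\nearrow 1$). The explicit $C(n,s)$-dependence in the final bound arises precisely from the form of $(-\Delta)^s u_s^{(2)}$ above, in the same spirit as the quantitative estimates of Proposition \ref{labogdan} and Remark \ref{labogdancns}.
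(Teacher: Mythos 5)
Your argument is correct and is essentially the paper's own: reduce to a single ball by a finite covering, localize $u_s$ near the ball, observe that the fractional Laplacian of the localized function is controlled in $L^\infty$ by the nonlocal tail, which by the homogeneity bound $u_s(y)\leq|y|^{\gamma_s(C)}$ produces exactly the factor $\tfrac{C(n,s)}{2s-\gamma_s(C)}$, and then invoke an interior H\"older estimate with constants uniform in $s\in[s_0,1)$. The only structural difference is that you truncate sharply, $u_s^{(1)}=u_s\chi_{B_{2\rho}(x_0)}$, while the paper multiplies by a smooth cutoff $\eta$ and estimates $\|(-\Delta)^s(u_s\eta)\|_{L^\infty(B_{2r})}$ via Proposition \ref{labogdan}; both work, since the regularity theorem only needs global boundedness plus an $L^\infty$ right-hand side in the smaller ball.

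One caveat on the point you yourself single out as the main obstacle. The uniform-in-$s$ interior estimate is precisely what the paper imports from \cite[Theorem 12.1]{MR2494809}, whose purpose is that the constants do not degenerate as the order tends to $2$; your alternative justification via the extension and weighted De Giorgi--Nash--Moser theory rests on the claim that ``the relevant $A_2$-norms of the weight remain bounded as $s\nearrow 1$'', and this is false: for $y^{1-2s}$ the $A_2$ constant blows up like $(1-s)^{-1}$ as $s\to1$, so the Fabes--Kenig--Serapioni H\"older exponent and constant obtained that way would a priori degenerate, which is exactly what must be avoided here. So that parenthetical route does not deliver the needed uniformity as stated; either cite the Caffarelli--Silvestre result as the paper does, or carry out an explicit Poisson-kernel argument tracking the $s$-dependence. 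With the uniform estimate sourced correctly, your proof is complete.
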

By a standard covering argument, there exists a finite number of balls such that $K\subset \bigcup_{j=1}^{k}B_{r}(x_j)$, for a given radius $r>0$ such that $\bigcup_{j=1}^{k}\overline{B_{2r}(x_j)}\subset C$. Thus, it is enough to prove
\begin{Proposition}\label{localreg}
Let $\overline{B_{2r}(\overline x)}\subset C$ be a closed ball and $s_0\in(0,1)$. Then there exist a constant $c>0$ and $\overline\alpha\in(0,1)$, both dependent only on $s_0,r,\overline x,n,C$, such that
\begin{equation*}
||u_s||_{C^{0,\alpha}(\overline{B_r(\overline x)})} \leq c\left(1+\frac{C(n,s)}{2s-\gamma_s(C)}\right),
\end{equation*}
for any $\alpha\in(0,\overline\alpha]$ and any $s\in[s_0,1)$.
\end{Proposition}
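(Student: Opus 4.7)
The strategy is to combine the Poisson representation for $s$-harmonic functions on the interior ball $B_{2r}(\overline x) \subset C$ with the controlled polynomial growth at infinity coming from the $\gamma_s(C)$-homogeneity of $u_s$. Since $\|u_s\|_{L^\infty(S^{n-1})}=1$, homogeneity yields $0\leq u_s(y)\leq |y|^{\gamma_s(C)}$ for every $y\in\R^n$, and since $\gamma_s(C)<2s$ this forces $u_s\in \mathcal{L}^1_s$. Because $u_s$ satisfies $(-\Delta)^s u_s=0$ in the entire ball $B_{2r}(\overline x)$, for $x\in B_{2r}(\overline x)$ I can write
\begin{equation*}
u_s(x)=\int_{\R^n\setminus B_{2r}(\overline x)} P^s_{B_{2r}(\overline x)}(x,y)\,u_s(y)\,dy,
\end{equation*}
where the classical Riesz--Bogdan Poisson kernel has the explicit form
\begin{equation*}
P^s_{B_{2r}(\overline x)}(x,y)= c_{n,s} \left(\frac{(2r)^2-|x-\overline x|^2}{|y-\overline x|^2-(2r)^2}\right)^{\!s}\frac{1}{|x-y|^n},
\end{equation*}
with normalizing constant $c_{n,s}=\Gamma(n/2)\pi^{-n/2-1}\sin(\pi s)$.

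\textbf{Key estimates.} For $x\in\overline{B_r(\overline x)}$ and $|y-\overline x|>2r$ the kernel is smooth in $x$ (we stay at positive distance from $\partial B_{2r}(\overline x)$), and a direct differentiation produces
\begin{equation*}
|\nabla_x P^s_{B_{2r}(\overline x)}(x,y)|\leq c\,\frac{\sin(\pi s)}{(1+|y|)^{n+2s}},
\end{equation*}
for a constant $c$ depending on $r,\overline x,n$ but not on $s$. Integrating against $u_s(y)\leq|y|^{\gamma_s(C)}$ and splitting the tail into a bounded near-field and a genuine far-field contribution, the latter reduces (in spherical shells) to $\int_{2r}^{\infty}\rho^{\gamma_s(C)-1-2s}\,d\rho$, which is sharply controlled by $c/(2s-\gamma_s(C))$. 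Combining these two pieces I obtain the Lipschitz-type estimate
\begin{equation*}
\|\nabla u_s\|_{L^\infty(\overline{B_r(\overline x)})} \leq c\,\frac{\sin(\pi s)}{2s-\gamma_s(C)}.
\end{equation*}

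\textbf{Conversion to the stated constant and conclusion.} The last step is to convert $\sin(\pi s)$ into $C(n,s)$. Using the reflection identity $\sin(\pi s)=\pi/(\Gamma(s)\Gamma(1-s))$ together with the definition \eqref{eq:Cns}, one finds $\sin(\pi s)=\pi^{n/2+1}\,C(n,s)/(2^{2s}s\,\Gamma(n/2+s)\,\Gamma(s))$, and the $s$-dependent factor is bounded and bounded away from zero uniformly for $s\in[s_0,1)$. Together with the uniform $L^\infty$-bound $|u_s(x)|\leq (|\overline x|+r)^{\gamma_s(C)}\leq c$ on $\overline{B_r(\overline x)}$, this yields the Lipschitz estimate $c(1+C(n,s)/(2s-\gamma_s(C)))$, which immediately gives the $C^{0,\alpha}$ bound for every $\alpha\in(0,1)$, in particular for any $\overline\alpha\in(0,1)$.

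\textbf{Main obstacle.} The delicate issue is uniformity as $s\nearrow 1$: the Poisson kernel itself degenerates because $\sin(\pi s)\to 0$, yet the boundary behaviour of $u_s$ becomes less regular. The sharp bookkeeping comes from the fact that the $\sin(\pi s)$ in the kernel is precisely compensated by the $1/\Gamma(1-s)$ hidden in $C(n,s)$, while the integrability threshold at infinity produces the denominator $2s-\gamma_s(C)$. Verifying that every absorbed constant stays bounded on $[s_0,1)$ (rather than collecting hidden $1-s$ factors) is the key technical point.
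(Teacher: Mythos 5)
Your proposal is essentially correct, but it takes a genuinely different route from the paper. The paper truncates: it fixes a cut-off $\eta\in C^\infty_c(B_{2r}(\overline x))$ with $\eta\equiv1$ on $B_r(\overline x)$, proves $\|(-\Delta)^s(u_s\eta)\|_{L^\infty(B_{2r}(\overline x))}\le C_0\bigl(1+\tfrac{C(n,s)}{2s-\gamma_s(C)}\bigr)$ (the tail of the homogeneous function over the cone produces the factor $1/(2s-\gamma_s(C))$, the commutator with $\eta$ is handled via Proposition \ref{labogdan}), and then invokes the interior regularity theorem \cite[Theorem 12.1]{MR2494809}, whose exponent $\overline\alpha$ and constant depend only on $n$ and $s_0$. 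You instead exploit that $u_s$ is $s$-harmonic in the whole ball $\overline{B_{2r}(\overline x)}\subset C$, write the explicit Poisson (Riesz--Bogdan) representation and differentiate under the integral sign; the far-field tail gives $\sin(\pi s)/(2s-\gamma_s(C))$, and the reflection formula converts $\sin(\pi s)$ into $C(n,s)$ up to factors bounded above and below on $[s_0,1)$. This buys a stronger conclusion: a genuine Lipschitz bound, hence the $C^{0,\alpha}$ estimate for \emph{every} $\alpha\in(0,1)$ rather than only $\alpha\le\overline\alpha$, with no black-box regularity theorem. The paper's cut-off machinery is softer, does not need the explicit kernel, and is reused elsewhere (the $H^s$ bound, the semigroup arguments). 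You should also justify the representation formula with a reference (it holds since $u_s\ge0$, $u_s\in\mathcal{L}^1_s$ and $u_s$ is $s$-harmonic in a neighbourhood of $\overline{B_{2r}(\overline x)}$; see e.g. \cite{MR1671973}).

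One step must be repaired as written: the displayed bound $|\nabla_x P^s_{B_{2r}(\overline x)}(x,y)|\le c\,\sin(\pi s)\,(1+|y|)^{-n-2s}$ with $c=c(n,r,\overline x)$ cannot hold for all $|y-\overline x|>2r$, because the factor $\bigl(|y-\overline x|^2-(2r)^2\bigr)^{-s}$ blows up as $|y-\overline x|\downarrow 2r$ while the right-hand side stays bounded there. The estimate is valid only in the far field, say $|y-\overline x|\ge 3r$. In the near shell $2r<|y-\overline x|<3r$ you must keep the boundary singularity and integrate it: $\int_{2r<|y-\overline x|<3r}\bigl(|y-\overline x|^2-(2r)^2\bigr)^{-s}\,\mathrm{d}y\le C(n,r)(1-s)^{-1}$, and it is precisely the prefactor $\sin(\pi s)\le\pi(1-s)$ that renders this contribution bounded uniformly for $s\in[s_0,1)$; the $\sin(\pi s)$ compensation is needed here, not only in the conversion to $C(n,s)$ that you describe. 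With this splitting made explicit, your bookkeeping (bounded near field plus far field of size $C(n,s)/(2s-\gamma_s(C))$) is exactly right and the proof closes, with constants depending only on $s_0,r,\overline x,n,C$ as required.
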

In order to achieve the desired result, we need to estimate locally the value of the fractional Laplacian of $u_s$ in a ball compactly contained in the cone $C$.
\begin{Lemma}
Let $\eta\in C^\infty_c(B_{2r}(\overline x))$ be a cut-off function such that $0\leq\eta\leq 1$ with $\eta\equiv 1$ in $B_r(\overline x)$. Under the same assumptions of Proposition \ref{localreg},
\begin{equation*}
||(-\Delta)^s(u_s\eta)||_{L^\infty(B_{2r}(\overline x))}\leq C_0\left(1+\frac{C(n,s)}{2s-\gamma_s(C)}\right)
\end{equation*}
for any $s\in[s_0,1)$, where $C_0>0$ depends on $s_0,n,\overline x,r,C,$ and the choice of the function $\eta$.
\end{Lemma}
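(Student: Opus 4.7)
The plan is to exploit the identity $(-\Delta)^s u_s \equiv 0$ on $\overline{B_{2r}(\overline x)} \subset C$ to rewrite $(-\Delta)^s(u_s\eta)$ as an absolutely convergent commutator-type integral, then split it at an intermediate radius so that the tail produces precisely the factor $C(n,s)/(2s-\gamma_s(C))$.

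First, fix $x\in B_{2r}(\overline x)$ and use the algebraic identity
\[
(u_s\eta)(x) - (u_s\eta)(y) = \eta(x)\bigl(u_s(x)-u_s(y)\bigr) + u_s(y)\bigl(\eta(x)-\eta(y)\bigr).
\]
Dividing by $|x-y|^{n+2s}$ and integrating in the principal-value sense, the first piece produces $\eta(x)(-\Delta)^s u_s(x)=0$ because $x$ lies in the interior of $C$ (this step needs $u_s\in\mathcal L^1_s$, which holds because $\gamma_s<2s$). The second piece is already absolutely integrable, since $|\eta(x)-\eta(y)|\leq \|\nabla\eta\|_\infty|x-y|$ kills one power of the singularity at $y=x$ and $\eta$ has compact support. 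This yields the commutator identity
\[
(-\Delta)^s(u_s\eta)(x) = C(n,s)\int_{\mathbb{R}^n} u_s(y)\,\frac{\eta(x)-\eta(y)}{|x-y|^{n+2s}}\,dy.
\]

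Next, choose $R>0$ large enough that $\mathrm{supp}(\eta)\cup\overline{B_{2r}(\overline x)} \subset B_{R/2}(0)$, and split the last integral at $|y|=R$. On the inner region $|y|\leq R$, bound the integrand by $\|\nabla\eta\|_\infty R^{\gamma_s}|x-y|^{-(n+2s-1)}$, using the homogeneity of $u_s$ together with $\|u_s\|_{L^\infty(S^{n-1})}=1$; integration over $B_R(0)\subset B_{2R}(x)$ gives a factor $\omega_{n-1}(2R)^{2-2s}/(2-2s)$. The crucial observation is that $C(n,s)/(2-2s)$ stays bounded for $s\in[s_0,1)$: this follows from $\Gamma(1-s)(1-s)=\Gamma(2-s)$, which makes $C(n,s)$ vanish like $1-s$ as $s\nearrow 1$ and so cancels the singularity in $1/(2-2s)$. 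Hence the inner contribution is bounded by a constant independent of $s$.

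On the outer region $|y|>R$ we have $\eta(y)=0$, so the integrand reduces to $\eta(x)u_s(y)/|x-y|^{n+2s}$. Using $|x-y|\geq |y|/2$ and $u_s(y)\leq|y|^{\gamma_s}$, one obtains
\[
\bigl|I_{\mathrm{out}}\bigr|\leq C(n,s)\,|\eta(x)|\,2^{n+2s}\omega_{n-1}\int_R^\infty \rho^{\gamma_s-1-2s}\,d\rho = c\,\frac{C(n,s)}{2s-\gamma_s(C)}\,R^{\gamma_s-2s},
\]
and choosing $R\geq 1$ the factor $R^{\gamma_s-2s}\leq 1$. Combining the two contributions gives the claimed estimate. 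The only subtle point is the asymptotics of $C(n,s)$ as $s\to 1$, which is what allows the inner-ball contribution (coming from the mild $L^\infty$ bound on $u_s$ near $\overline x$) to remain bounded; the far-field contribution, which is controlled solely by the polynomial growth of $u_s$, is the one responsible for the announced $C(n,s)/(2s-\gamma_s(C))$ factor.
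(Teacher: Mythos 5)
Your overall strategy is the same as the paper's (use $(-\Delta)^s u_s=0$ inside $C$ to reduce to the commutator term, split at a large radius $R$, and let the far field produce the $C(n,s)/(2s-\gamma_s(C))$ factor; your outer estimate is correct and essentially identical to the paper's). The gap is in the inner region. Your claim that the commutator integral $\int_{\mathbb{R}^n} u_s(y)\,\frac{\eta(x)-\eta(y)}{|x-y|^{n+2s}}\,dy$ is \emph{absolutely} convergent because the Lipschitz bound ``kills one power of the singularity'' is false precisely in the regime the lemma is about: with $|\eta(x)-\eta(y)|\le \|\nabla\eta\|_\infty |x-y|$ the integrand near $y=x$ is of size $|x-y|^{1-n-2s}$, and $\int_{B_\delta(x)}|x-y|^{1-n-2s}\,dy=\omega_{n-1}\int_0^\delta \rho^{-2s}\,d\rho$ diverges for every $s\ge 1/2$. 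Consequently the factor you announce, $\omega_{n-1}(2R)^{2-2s}/(2-2s)$, does not follow from your bound (it would require an integrand of size $|x-y|^{2-n-2s}$, i.e.\ a second-order bound on the increment, which the plain Lipschitz estimate does not give); as computed, your inner contribution is $+\infty$ for $s$ close to $1$. In fact, for $x$ in the transition zone where $\nabla\eta(x)\neq 0$ the commutator integral exists only as a principal value, via the cancellation of the odd first-order term.

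The correct mechanism — and the one you were implicitly aiming at with the cancellation $C(n,s)\sim 1-s$ against $1/(2-2s)$ — requires the second-difference (symmetrization) structure: either Taylor-expand to second order and use $|2\eta(x)-\eta(x+z)-\eta(x-z)|\le M|z|^2$ near the singularity (together with the local smoothness/boundedness of $u_s$), which yields the integrable kernel $|z|^{2-n-2s}$ and the factor $\delta^{2-2s}/(2-2s)$ that $C(n,s)$ then compensates, or do as the paper does: control the inner term through $R^{\gamma_s(C)}|(-\Delta)^s\eta(x)|$ and invoke Proposition \ref{labogdan}, whose proof contains exactly this symmetrized second-order estimate with the $C(n,s)/(1-s)$ cancellation. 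With the near-diagonal part repaired in this way (and the principal value retained until the cancellation is used), the rest of your argument goes through and reproduces the paper's conclusion.
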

\proof
Let $R>1$ such that $\overline{B_{2r}(\overline x)}\subset B_{R/2}(0)$. Hence, let fix a point $x\in B_{2r}(\overline x)$. We can express the fractional Laplacian of $u_s\eta$ in the following way
\begin{eqnarray*}
(-\Delta)^s(u_s\eta)(x)&=&\eta(x)(-\Delta)^su_s(x)+C(n,s)\int_{\mathbb{R}^n}u_s(y)\frac{\eta(x)-\eta(y)}{|x-y|^{n+2s}}\mathrm{d}y\nonumber\\
&=&C(n,s)\int_{B_R(0)}u_s(y)\frac{\eta(x)-\eta(y)}{|x-y|^{n+2s}}\mathrm{d}y+C(n,s)\int_{\mathbb{R}^n\setminus B_R(0)}u_s(y)\frac{\eta(x)-\eta(y)}{|x-y|^{n+2s}}\mathrm{d}y.
\end{eqnarray*}
We recall that $u_s(x)=|x|^{\gamma_s(C)}u_s(x/|x|)$ and that for any $s\in(0,1)$ the functions $u_s$ are normalized such that $||u_s||_{L^\infty(S^{n-1})}=1$. Moreover we remark that $\eta(x)-\eta(y)=\eta(x)\geq 0$ in $B_{2r}(\overline x)\times (\mathbb{R}^n\setminus B_R(0))$. Hence, using Proposition \ref{labogdan} and the fact that $\gamma_s(C)<2s$, we obtain\begin{eqnarray*}
|(-\Delta)^s(u_s\eta)(x)|&\leq&C(n,s)\left|\int_{B_R(0)}u_s(y)\frac{\eta(x)-\eta(y)}{|x-y|^{n+2s}}\mathrm{d}y\right|+C(n,s)\left|\int_{\mathbb{R}^n\setminus B_R(0)}u_s(y)\frac{\eta(x)-\eta(y)}{|x-y|^{n+2s}}\mathrm{d}y\right|\nonumber\\
&\leq& R^{\gamma_s(C)}|(-\Delta)^s\eta(x)|+C(n,s)2^{n+2s}\int_{\mathbb{R}^n\setminus B_R(0)}\frac{1}{|y|^{n+2s-\gamma_s(C)}}\mathrm{d}y\nonumber\\
&\leq& \frac{cR^2}{(1+|x|)^{n+2s}}+C(n,s)2^{n+2}\omega_{n-1}\int_R^{+\infty}r^{-1-2s+\gamma_s(C)}\mathrm{d}r\nonumber\\
&\leq& \frac{cR^2}{(1+|x|)^{n+2s}} +\frac{cC(n,s)}{R^{2s-\gamma_s(C)}(2s-\gamma_s(C))}\nonumber\\
&\leq& C_0\left(1+\frac{C(n,s)}{2s-\gamma_s(C)}\right).
\end{eqnarray*}
\endproof
\begin{proof}[Proof of Proposition \ref{localreg}]
Let as before $\eta\in C^\infty_c(B_{2r}(\overline x))$ be a cut-off function such that $0\leq\eta\leq 1$ with $\eta\equiv 1$ in $B_r(\overline x)$.
First, we remark that there exists a constant $c_0>0$ such that for any $s\in(0,1)$, it holds
\begin{equation}\label{bound}
||u_s\eta||_{L^\infty(\mathbb{R}^n)}\leq c_0,
\end{equation}
where $c_0$ depends only on $n,\overline x,r$. In fact, let $R>0$ be such that $\overline{B_{2r}(\overline x)}\subset B_R(0)$. Then, for any $x\in\mathbb{R}^n$, we have $0\leq u_s\eta(x)\leq R^{\gamma_s(C)}\leq R^2$. Using the bound \eqref{bound} and the previous Lemma, we can apply \cite[Theorem 12.1]{MR2494809} obtaining the existence of $\overline\alpha\in(0,1)$ and $C>0$, both depending only on $n,s_0$ and the choice of $B_r(\overline x)$ such that
\begin{align*}
||u_s\eta||_{C^{0,\alpha}(\overline{B_r(\overline x)})}&\leq C(||u_s\eta||_{L^\infty(\mathbb{R}^n)}+||(-\Delta)^s(u_s\eta)||_{L^\infty(B_{2r}(\overline x))})\\
&\leq C\left(c_0+C_0\left(1+\frac{C(n,s)}{2s-\gamma_s(C)}\right)\right),
\end{align*}
for any $s\in[s_0,1)$ and any $\alpha\in(0,\overline\alpha]$. Since $\eta\equiv 1$ in $B_r(\overline x)$ we obtain the result.
\end{proof}
Similarly, now we need to construct some estimate related to the $H^s$ seminorm of the solution $u_s$, Since the functions do not belong to $H^s(\R^n)$, we need to truncate the solution with some cut off function in order to avoid the problems related to the growth at infinity. In such a way, we can use
\begin{equation}\label{hsnorm}
[v]^2_{H^s(\R^n)} = \norm{(-\Delta)^{s/2}v}{L^2(\R^n)}^2 = \int_{\R^n}{v(-\Delta)^s v \mathrm{d}x}.
\end{equation}
which holds for every $v \in H^s(\R^n)$. So, let $\eta\in C^\infty_c(B_2)$ be a radial cut off function such that $\eta\equiv 1 $ in $B_1$ and $0\leq\eta\leq1$ in $B_2$, and consider $\eta_R(x) = \eta(\frac{x-x_0}{R})$ the rescaled cut off function defined in $B_{2R}(x_0)$, for some $R>0$ and $x_0\in\R^n$.
\begin{Proposition}\label{propHs}
Let $s_0\in(0,1)$ and $\eta_R \in C^\infty_c(B_{2R}(x_0))$ previously defined. Then
$$
[u_s\eta_R]^2_{H^s(\R^n)} \leq c\left(1 + \frac{C(n,s)}{2s - \gamma_s(C)} \right) 
$$
for any $s\in[s_0,1)$, where $c>0$ is a constant that depends on $x_0,R,C,s_0$ and $\eta$.
\end{Proposition}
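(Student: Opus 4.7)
The strategy parallels Proposition \ref{localreg}: polarize the seminorm as $[v]^2_{H^s(\mathbb{R}^n)}=\int v(-\Delta)^s v\,\mathrm{d}x$ for $v:=u_s\eta_R$ (after checking that $v\in H^s(\mathbb{R}^n)$, which one may justify by approximation with smooth compactly supported functions), and then control the right-hand side using the $s$-harmonicity of $u_s$ in $C$. The plan is to apply the fractional Leibniz formula
\[
(-\Delta)^s(u_s\eta_R)=u_s(-\Delta)^s\eta_R+\eta_R(-\Delta)^s u_s-I_s(u_s,\eta_R),
\]
with the bilinear kernel $I_s(f,g)(x):=C(n,s)\int\frac{(f(y)-f(x))(g(y)-g(x))}{|x-y|^{n+2s}}\,\mathrm{d}y$, multiply by $v$ and integrate, splitting the seminorm into three pieces.

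The decisive observation is that the middle piece vanishes: $u_s\eta_R^2$ is supported in $\overline C$ (since $u_s\equiv 0$ off $C$) while $(-\Delta)^s u_s\equiv 0$ pointwise in $C$, so the integrand is zero almost everywhere. The first piece $\int u_s^2\eta_R(-\Delta)^s\eta_R\,\mathrm{d}x$ is uniformly bounded in $s\in[s_0,1)$ thanks to Proposition \ref{labogdan} together with the uniform growth bound $u_s(x)\le|x|^{\gamma_s(C)}\le c(R,x_0)$ on $B_{2R}(x_0)$; any spurious $1/(1-s)$ factor appearing in the estimate of $(-\Delta)^s\eta_R$ is absorbed by the matching vanishing of $C(n,s)$ as $s\nearrow 1$. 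For the interaction piece $\int u_s\eta_R\,I_s(u_s,\eta_R)\,\mathrm{d}x$, I would derive a pointwise bound on $|I_s(u_s,\eta_R)(x)|$ for $x$ in the support of $\eta_R$ by splitting the $y$-integration into a far field $\{|y|>M\}$ (with $M$ chosen so large that $\eta_R\equiv 0$ there) and a near field $\{|y|\le M\}$. In the far field $\eta_R(y)=0$, so the integrand reduces to $\eta_R(x)(u_s(x)-u_s(y))/|x-y|^{n+2s}$; using $u_s(y)\le|y|^{\gamma_s(C)}$ and $|x-y|\ge|y|/2$ for $|y|$ large, the contribution collapses to a quantity proportional to
\[
C(n,s)\int_M^{\infty}r^{\gamma_s(C)-1-2s}\,\mathrm{d}r=\frac{c\,C(n,s)}{2s-\gamma_s(C)},
\]
producing exactly the characteristic factor of the target bound. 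In the near field one exploits the Lipschitz estimate $|\eta_R(y)-\eta_R(x)|\le cR^{-1}|x-y|$ together with the uniform $C^{0,s}$-modulus of continuity of $u_s$ on bounded annuli (which follows from the growth estimate $u_s\le c\,\mathrm{dist}(\cdot,\partial C)^s$ in \eqref{c(s)original}) to tame the singular kernel.

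Multiplying the resulting pointwise bound $|I_s(u_s,\eta_R)(x)|\le c(1+C(n,s)/(2s-\gamma_s(C)))$ by $\|v\|_{L^\infty}\cdot|\mathrm{supp}\,v|$ closes the estimate and produces the claimed bound. The main technical obstacle is precisely the near-field part of $I_s$ when $s$ is close to $1$: since $u_s$ is merely $C^{0,s}$ across $\partial C$, the product $(u_s(y)-u_s(x))(\eta_R(y)-\eta_R(x))$ cancels only at order $|y-x|^{1+s}$, and convergence of the resulting singular integral against $|x-y|^{-n-2s}$ hinges on the same delicate matching between $(1-s)^{-1}$ and the vanishing of $C(n,s)$ that was already exploited in the proof of Proposition \ref{labogdan}.
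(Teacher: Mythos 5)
Your decomposition starts exactly as the paper does (polarize the seminorm, observe that $\int u_s\eta_R^2(-\Delta)^su_s\,\mathrm{d}x=0$ because $(-\Delta)^su_s=0$ in $C$ and $u_s\equiv0$ outside, and extract the characteristic factor $\tfrac{C(n,s)}{2s-\gamma_s(C)}$ from the far field), but the way you treat the interaction term contains a genuine gap. You propose a \emph{pointwise} bound on $I_s(u_s,\eta_R)(x)$, and in the near field this forces you to invoke a $C^{0,s}$ modulus of continuity for $u_s$ with a constant uniform in $s\in[s_0,1)$: with only the Lipschitz bound on $\eta_R$ and the $L^\infty$ bound on $u_s$, the kernel $|x-y|^{1-n-2s}$ is not integrable near the diagonal once $s\geq1/2$, so some H\"older regularity of $u_s$ of exponent larger than $2s-1$ is indispensable for your pointwise estimate, and its seminorm enters the final constant. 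But \eqref{c(s)original} does not provide this: it is a two-sided bound on the \emph{size} of $u_s$ (not on its oscillation between two interior points), its constants $c_1(n,s,C),c_2(n,s,C)$ are not uniform in $s$, and it is only available for regular cones, whereas Proposition \ref{propHs} is used in the proof of the main theorem for an arbitrary open cone $C$. A uniform-in-$s$ H\"older bound on annuli is precisely the content of Theorem \ref{thm:holder}, which is proved much later, requires $C\in\mathcal C^{1,1}$, and whose proof uses consequences of the very estimates you are trying to establish here, so importing it at this stage would be circular. Moreover, for $x$ on or near $\partial C$ and $s$ close to $1$ the pointwise value of $I_s(u_s,\eta_R)(x)$ need not even be finite for a general cone, so the whole pointwise strategy breaks down exactly where the difficulty sits.

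The paper circumvents this without any regularity of $u_s$: instead of splitting off $I_s(u_s,\eta_R)$, it writes the cross contribution as $C(n,s)\iint\frac{\eta_R(x)-\eta_R(y)}{|x-y|^{n+2s}}\,u_s(x)u_s(y)\eta_R(x)\,\mathrm{d}y\,\mathrm{d}x$ and symmetrizes in $(x,y)$, which turns the whole seminorm (after the harmonic term drops) into
$$
\frac{C(n,s)}{2}\iint\frac{\abs{\eta_R(x)-\eta_R(y)}^2}{\abs{x-y}^{n+2s}}\,u_s(x)u_s(y)\,\mathrm{d}y\,\mathrm{d}x .
$$
Here the squared difference of the smooth cutoff alone tames the singularity: near the diagonal one uses $\abs{\eta_R(x)-\eta_R(y)}\leq\norm{\nabla\eta_R}{L^\infty}\abs{x-y}$ and only the local $L^\infty$ bound on $u_s$ coming from homogeneity and $\norm{u_s}{L^\infty(S^{n-1})}=1$, producing a factor $\tfrac{C(n,s)}{1-s}$ which stays bounded, while the far field (where $\abs{\eta_R(x)-\eta_R(y)}\leq1$) gives the $\tfrac{C(n,s)}{2s-\gamma_s(C)}$ term, exactly as in your far-field computation. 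If you combine your first piece $\int u_s^2\eta_R(-\Delta)^s\eta_R$ with the interaction piece before estimating, you recover this symmetric expression and your argument can be repaired; as written, the near-field step rests on an estimate that is neither available at this point of the paper nor true in the generality required.
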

\begin{proof}
Let $\eta\in C^\infty_c (B_2)$ be a radial cut off function such that $\eta\equiv 1$ in $B_1$ and $0\leq\eta\leq 1$ in $B_2$, and consider the collection of $(\eta_R)_R$ with $R>0$ defined by $\eta_R(x)=\eta(\frac{x-x_0}{R})$ with some $x_0\in\R^n$. By \eqref{hsnorm}, for every $R>0$ we obtain
$$
[u_s\eta_R]^2_{H^s(\R^n)} = \norm{(-\Delta)^{s/2}(u_s\eta_R)}{L^2(\R^n)}^2 = \int_{\R^n}{u_s\eta_R(-\Delta)^s (u_s\eta_R) \mathrm{d}x}.
$$
By definition of the fractional Laplacian we have
\begin{align*}
\int_{\R^n}{u_s\eta_R(-\Delta)^s (u_s\eta_R) \mathrm{d}x} &= C(n,s)\int_{\R^n\times\R^n}{u_s(x)\eta_{R}(x)\frac{u_s(x)\eta_R(x) - u_s(y)\eta_R(y)}{\abs{x-y}^{n+2s}}\mathrm{d}y\mathrm{d}x}\\
&= \int_{\R^n}{\eta^2_R u_s (-\Delta)^s u_s\mathrm{d}x}+C(n,s)\int_{\R^n\times\R^n}{\frac{\eta_R(x) - \eta_R(y)}{\abs{x-y}^{n+2s}}u_s(x)u_s(y)\eta_R(x)\mathrm{d}y\mathrm{d}x}\\
&= 
\frac{C(n,s)}{2}\int_{\R^n\times\R^n}{\frac{\abs{\eta_R(x) - \eta_R(y)}^2}{\abs{x-y}^{n+2s}}u_s(x)u_s(y) \mathrm{d}y\mathrm{d}x}
\end{align*}
where the last equation is obtained by the symmetrization of the previous integral with respect to the variable $(x,y) \in \R^n \times \R^n$. Before splitting the domain of integration into different subset, it is easy to see that
$$
\begin{aligned}
\eta_R(x) - \eta_R(y) &\equiv  0  & \mbox{in }& B_{R}(x_0)\times B_R(x_0) \cup (\R^n \setminus B_{2R}(x_0)) \times (\R^n \setminus B_{2R}(x_0)) \\
\abs{\eta_R(x) - \eta_R(y)} &\equiv  1 & \mbox{in }& B_{R}(x_0)\times (\R^n \setminus B_{2R}(x_0)) \cup (\R^n \setminus B_{2R}(x_0))\times B_R(x_0).
\end{aligned}
$$
where all the previous balls are centered at the point $x_0$. Hence, given the sets $\Omega_1 = B_{3R}(x_0)\times B_{3R}(x_0)$ and $\Omega_2 = B_{2R}(x_0)\times (\R^n \setminus B_{3R}(x_0)) \cup (\R^n\setminus B_{3R}(x_0)) \times B_{2R}(x_0)$ we have
\begin{align*}
\int_{\R^n\times\R^n}{\frac{\abs{\eta_R(x) - \eta_R(y)}^2}{\abs{x-y}^{n+2s}}u_s(x)u_s(y) \mathrm{d}y\mathrm{d}x}  \leq & \int_{\Omega_1}{\frac{\abs{\eta_R(x) - \eta_R(y)}^2}{\abs{x-y}^{n+2s}}u_s(x)u_s(y) \mathrm{d}y\mathrm{d}x}+\\
&+\int_{\Omega_2}{\frac{\abs{\eta_R(x) - \eta_R(y)}^2}{\abs{x-y}^{n+2s}}u_s(x)u_s(y) \mathrm{d}y\mathrm{d}x}.
\end{align*}
In particular
\begin{align*}
\int_{\Omega_1}{\frac{\abs{\eta_R(x) - \eta_R(y)}^2}{\abs{x-y}^{n+2s}}u_s(x)u_s(y) \mathrm{d}y\mathrm{d}x} &\leq \sup_{B_{3R}(x_0)} u_s^2\int_{B_{3R}(x_0)\times B_{3R}(x_0)}{\frac{\norm{\nabla \eta_R}{L^\infty(\R^n)}^2}{\abs{x-y}^{n+2s-2}}\mathrm{d}y\mathrm{d}x}\\
&\leq \norm{\nabla \eta_R}{L^\infty}^2\sup_{B_{3R}(x_0)} u_s^2\int_{B_{3R}(0)}{\mathrm{d}x \int_{B_{6R}(x)}{\frac{1}{\abs{x-y}^{n+2s-2}}\mathrm{d}y}}\\
&\leq \frac{\norm{\nabla \eta}{L^\infty}^2}{R^2}\sup_{B_{3R}(x_0)} u_s^2 \abs{B_{3R}}\abs{S^{n-1}} \frac{(6R)^{2-2s}}{2(1-s)}\\
&\leq C\norm{\nabla \eta}{L^\infty}^2 \frac{R^{n-2s}}{2(1-s)} \max\{\abs{x_0}^{2\gamma_s},(3R)^{2\gamma_s}\}\norm{u_s}{L^\infty(S^{n-1})}
\end{align*}
where in the second inequality we use the changes of variables $x-x_0$ and $y-x_0$ and the fact that $B_{3R}(0)\times B_{3R}(0) \subset B_{3R}(0)\times B_{6R}(x)$ for every $x \in B_{3R}(0)$. Similarly we have
\begin{align*}
\int_{\Omega_2}{\frac{\abs{\eta_R(x) - \eta_R(y)}^2}{\abs{x-y}^{n+2s}}u_s(x)u_s(y) \mathrm{d}y\mathrm{d}x} &\leq 2 \int_{B_{2R}(x_0)}{u_s(x)\left( \int_{\R^n\setminus B_{3R}(x_0)}{\frac{u_s(y)}{\abs{x-y}^{n+2s}} \mathrm{d}y}\right)\mathrm{d}x}\\
&\leq 2 \int_{B_{2R}(0)}{u_s(x+x_0)\left( \int_{\R^n\setminus B_{3R}(0)}{\frac{u_s(y+x_0)}{\abs{y}^{n+2s}\left(1-\frac{\abs{x}}{\abs{y}}\right)^{n+2s}} \mathrm{d}y}\right)\mathrm{d}x}\\
&\leq 2 \cdot 3^{n+2s}\int_{B_{2R}(0)}{u_s(x+x_0)\left( \int_{\R^n\setminus B_{3R}(0)}{\frac{C(\abs{y}+\abs{x_0})^{\gamma_s}}{\abs{y}^{n+2s}} \mathrm{d}y}\right)\mathrm{d}x}\\
&\leq C \sup_{B_{2R}(x_0)}{u_s} \abs{B_{2R}} \abs{S^{n-1}}2^{\gamma_s} G(x_0,R)
\end{align*}
with
$$
G(x_0,R)=\left\{
\begin{aligned}
\frac{\abs{x_0}^{\gamma_s}}{2s-\gamma_s}(3R)^{-2s}& \mbox{ if } \abs{x_0}\geq 3R\\
\frac{ (3R)^{\gamma_s-2s}}{2s-\gamma_s} & \mbox{ if } \abs{x_0}\leq 3R
\end{aligned}\right.
\leq \frac{(3R)^{-2s}}{2s-\gamma_s}\max\{\abs{x_0},3R\}^{\gamma_s}.
$$
Finally, we obtain the desired bound for the seminorm $[u_s\eta_R]^2_{H^s(\R^n)}$ summing the two terms and recalling that $\norm{u_s}{L^\infty(S^{n-1})}=1$.
\end{proof}

\section{Characteristic exponent $\gamma_s(C)$: properties and asymptotic behaviour}
In this section we start the analysis of the asymptotic behaviour of the homogeneity degree $\gamma_s(C)$ as $s$ converges to $1$. The main results are two: first we get a monotonicity result for the map $s\mapsto \gamma_s(C)$, for a fixed regular cone $C$, which ensures the existence of the limit and, using some comparison result, a bound on the possible value of the limit exponent. Secondly we study the asymptotic behaviour of  the quotient~$\frac{C(n,s)}{2s-\gamma_s(C)}$. \\\\
In order to prove the first result and compare different order of $s$-harmonic functions for different power of $(-\Delta)^s$, we need to introduce some results which give a natural extension of the classic semigroup property of the fractional Laplacian, for function defined on cones which grow at infinity.

\subsection{Distributional semigroup property}
It is well known that if we deal with smooth functions with compact support, or more generally with functions in the Schwartz space $ \mathcal{S}(\R^n)$, a semigroup property holds for the fractional Laplacian, i.e. $(-\Delta)^{s_1}\circ(-\Delta)^{s_2}=(-\Delta)^{s_1+s_2}$, where $s_1,s_2\in(0,1)$ with $s_1+s_2<1$. Since we have to deal with functions in $\mathcal{L}^1_s$ that grow at infinity, we have to construct a distributional counterpart of the semigroup property, in order to compute high order fractional Laplacians for solutions of the problem given in \eqref{Pstheta}.\\\\
First of all, we remark that a solution $u_s$ to \eqref{Pstheta} for a fixed cone $C$ belongs to $\mathcal{L}^1_s$ since $0\leq u_s(x)\leq|x|^{\gamma_s(C)}$ in $\mathbb{R}^n$ with $\gamma_s(C)\in(0,2s)$. Moreover, by the homogeneity one can rewrite the norm \eqref{norml1s} in the following way
\begin{align*}
\norm{u_s}{\mathcal{L}^1_s} = \int_{\R^n}{\frac{u_s(x)}{(1+\abs{x})^{n+2s}}\mathrm{d}x}=&\int_{S^{n-1}}{u_s\mathrm{d}\sigma}\int_{0}^\infty{\frac{\rho^{n-1+\gamma_s(C)}}{(1+\rho)^{n+2s}}\mathrm{d}\rho}\\
=&\frac{\Gamma(n+\gamma_s(C))\Gamma(2s-\gamma_s(C))}{\Gamma(n+2s)}\int_{S^{n-1}}{u_s\mathrm{d}\sigma}.
\end{align*}
In the recent paper \cite{newdefinition} the authors introduced a new notion of fractional Laplacian applying to a
wider class of functions which grow more than linearly at infinity. This is achieved by defining an equivalence class of functions modulo polynomials of a fixed order. However, it can be hardly exploited to the solutions of \eqref{Pstheta} as they annihilate on a set of nonempty interior.\\\\
As shown in \cite[Definition 3.6]{MR1671973}, if we consider a smooth function with compact support $\varphi\in C^\infty_c(\R^n)$(or $\varphi \in C^2_c (\R^n)$), we can define the distribution $k^{2s}$ by the formula
$$
(-\Delta)^s \varphi(0)= (k^{2s},\varphi).
$$
By this definition, it follows that $(-\Delta)^s \varphi(x) = k^{2s} \ast \varphi(x)$.
\begin{Definition}\cite[Definition 3.7]{MR1671973}
For $u\in \mathcal{L}^1_s$ we define the \emph{distributional fractional Laplacian} $(-\widetilde\Delta)^s u$ by the formula
$$
((-\widetilde\Delta)^s u, \varphi) = (u, (-\Delta)^s \varphi),\quad \forall\varphi \in C^\infty_c (\R^n).
$$
\end{Definition}
In particular, since given an open subset $D\subset \R^n$ and $u\in C^2(D)\cap \mathcal{L}^1_s$, the fractional Laplacian exists as a continuous function of $x\in D$ and $(-\widetilde\Delta)^s u = (-\Delta)^s u$ as a distribution in $D$ \cite[Lemma 3.8]{MR1671973}, through the paper we will always use $(-\Delta)^s$ both for the classic and the distributional fractional Laplacian. The following is a useful tool to compute the distributional fractional Laplacian.
\begin{Lemma}\cite[Lemma 3.3]{MR1671973}\label{epsilon}
Assume that
\begin{equation}\label{assumption}
\iint_{\abs{y-x}>\varepsilon}{\frac{\abs{f(x)g(y)}}{\abs{y-x}^{n+2s}}\mathrm{d}x\mathrm{d}y}<+\infty \quad and \quad\int_{\R^n}{\abs{f(x)g(x)}\mathrm{d}x}<+\infty,
\end{equation}
then $((-\Delta)^s_\varepsilon f, g)=(f, (-\Delta)^s_\varepsilon g)$. Moreover if $f\in \mathcal{L}^1_s$ and $g \in C_c(\R^n)$ the assumptions \eqref{assumption} are satisfied for every $\varepsilon>0$.
\end{Lemma}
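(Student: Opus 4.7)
The plan is to treat the proof as a direct computation with Fubini, exploiting the symmetry of the kernel $|x-y|^{-(n+2s)}$. First I would rewrite the truncated operator in its non-symmetric form: since the region $\{|y-x|>\varepsilon\}$ has a radially symmetric complement, the constant
\[
A_\varepsilon := C(n,s)\int_{\{|z|>\varepsilon\}}\frac{1}{|z|^{n+2s}}\,\mathrm{d}z = \frac{C(n,s)\,\omega_{n-1}}{2s}\,\varepsilon^{-2s}
\]
is independent of $x$, so for any $f\in \mathcal{L}^1_s$ one has
\[
(-\Delta)^s_\varepsilon f(x) = A_\varepsilon\, f(x) - C(n,s)\int_{\{|y-x|>\varepsilon\}} \frac{f(y)}{|x-y|^{n+2s}}\,\mathrm{d}y.
\]
Applied to $f$ paired with $g$, and to $g$ paired with $f$, the ``diagonal'' terms $A_\varepsilon\int_{\R^n} f g$ are identical and finite by the second hypothesis, so they cancel in the difference.

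It then suffices to show that the two off-diagonal double integrals coincide. Writing them out,
\[
\bigl((-\Delta)^s_\varepsilon f,g\bigr) - \bigl(f,(-\Delta)^s_\varepsilon g\bigr) = C(n,s)\iint_{\{|y-x|>\varepsilon\}} \frac{f(x)g(y)-f(y)g(x)}{|x-y|^{n+2s}}\,\mathrm{d}x\,\mathrm{d}y.
\]
The first integrability hypothesis says precisely that each of the two pieces is absolutely integrable over $\{|y-x|>\varepsilon\}$, so Fubini applies; swapping the names of the dummy variables $x\leftrightarrow y$ in the second piece, and using the symmetry $|x-y|=|y-x|$, the two pieces agree and the difference vanishes.

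For the verification that $f\in\mathcal{L}^1_s$ and $g\in C_c(\R^n)$ suffice, I would first note that $\mathcal{L}^1_s\subset L^1_{\mathrm{loc}}(\R^n)$, since on any compact set the weight $(1+|x|)^{-(n+2s)}$ is bounded away from zero; boundedness of $g$ and compactness of its support then give $\int |fg|<\infty$. For the first hypothesis, let $K=\operatorname{supp} g$ and choose $R>0$ with $K\subset B_R$; then the $y$-integration runs over $K$, and I split the $x$-integration into $\{|x|\le 2R,\;|x-y|>\varepsilon\}$, where the kernel is bounded by $\varepsilon^{-(n+2s)}$ and $f$ is in $L^1_{\mathrm{loc}}$, and $\{|x|>2R\}$, where $|x-y|\ge |x|/2$ gives $|x-y|^{-(n+2s)}\le c\,(1+|x|)^{-(n+2s)}$, so the contribution is controlled by $\|g\|_\infty\|f\|_{\mathcal{L}^1_s}$ up to a constant.

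The only genuinely delicate point is the Fubini step: everything hinges on the first integrability hypothesis, and without the truncation $\{|y-x|>\varepsilon\}$ the kernel $|x-y|^{-(n+2s)}$ would not be locally integrable near the diagonal. The hypothesis is crafted exactly so that the off-diagonal integral is absolutely convergent and the swap is legitimate; the symmetry of the kernel does the rest. Accordingly, I expect no obstacle beyond carefully bookkeeping the two regions in the verification step.
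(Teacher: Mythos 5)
Your proof is correct: the cancellation of the diagonal terms (finite by the second condition in \eqref{assumption}, since the truncated kernel integrates to the finite constant $A_\varepsilon$), the Fubini swap justified by the first condition, and the verification that $f\in\mathcal{L}^1_s$, $g\in C_c(\R^n)$ imply \eqref{assumption} are all sound. The paper does not prove this lemma but quotes it from \cite[Lemma 3.3]{MR1671973}, and your kernel-symmetry/Fubini argument is exactly the standard proof of that result (the only cosmetic remark being that in the first part $f$ need not belong to $\mathcal{L}^1_s$, so the pointwise splitting off of $A_\varepsilon f(x)$ should be read as valid for a.e.\ $x$ in the support of $g$, which is all the pairing requires and is guaranteed by \eqref{assumption} itself).
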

Before proving the semigroup property, we prove the following lemma which ensures the existence of the $\delta$-Laplacian of the $s$-Laplacian, for $0<\delta<1$.
\begin{Lemma}\label{lemma.estimate}
Let $u_s$ be solution of \eqref{Pstheta} with $C$ a regular cone. Then we have $(-\Delta)^s u_s \in \mathcal{L}^1_\delta$ for any $\delta>0$, i.e.
$$
\int_{\R^n}{\frac{\abs{(-\Delta)^s u_s (x)}}{(1+\abs{x})^{n+2\delta}}\mathrm{d}x} < +\infty.
$$
\end{Lemma}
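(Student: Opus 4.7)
The idea is to exploit $s$-harmonicity inside $C$ to restrict the singular integral to $\mathbb{R}^n\setminus C$ and use $\gamma_s$-homogeneity to separate the radial and angular contributions. Since $u_s\in C^\infty(C)\cap\mathcal{L}^1_s$ and $(-\Delta)^s u_s=0$ pointwise in $C$, by \cite[Lemma 3.8]{MR1671973} the distributional fractional Laplacian vanishes on $C$. For $x\in\mathbb{R}^n\setminus\overline C$, the vanishing $u_s(x)=0$ reduces the defining formula to
\[
(-\Delta)^s u_s(x) = -\,C(n,s)\int_C \frac{u_s(y)}{|x-y|^{n+2s}}\,\mathrm{d}y,
\]
and the substitution $y=|x|z$, together with $u_s(|x|z)=|x|^{\gamma_s(C)}u_s(z)$, yields $(-\Delta)^s u_s(x)=|x|^{\gamma_s-2s}(-\Delta)^s u_s(x/|x|)$. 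Writing $x=r\theta$ with $\theta\in S^{n-1}\setminus\omega$ then decouples the integral we have to estimate:
\[
\int_{\mathbb{R}^n\setminus C}\frac{|(-\Delta)^s u_s(x)|}{(1+|x|)^{n+2\delta}}\,\mathrm{d}x = \left(\int_0^\infty\frac{r^{n-1+\gamma_s-2s}}{(1+r)^{n+2\delta}}\,\mathrm{d}r\right)\left(\int_{S^{n-1}\setminus\omega}|(-\Delta)^s u_s(\theta)|\,\mathrm{d}\sigma\right).
\]
The radial factor is finite: convergence at the origin follows from $n-1+\gamma_s-2s>-1$ (using $n\geq 2$ and $s<1$), and at infinity from $\gamma_s<2s$, independently of $\delta\geq 0$.

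The nontrivial task is the angular bound. I would use the sharp boundary estimate \eqref{c(s)original}, $u_s(y)\leq c\,|y|^{\gamma_s-s}\,\mathrm{dist}(y,\partial C)^s$, and split the integral defining $(-\Delta)^s u_s(\theta)$ according to whether $y$ is close to $\theta$ or not. The far-field part, where $|y-\theta|$ is bounded below, is controlled uniformly in $\theta\in S^{n-1}$ by the $\mathcal{L}^1_s$-norm of $u_s$. The near-field part is only relevant when $\theta$ approaches $\partial\omega$; there one flattens $\partial C$ via a local $C^{1,1}$ diffeomorphism and reduces the estimate to the model half-space computation
\[
\int_{\{z_n>0\}}\frac{z_n^s}{(|z'|^2+(z_n+d)^2)^{(n+2s)/2}}\,\mathrm{d}z \;=\; c_n B(s+1,s)\,d^{-s}, \qquad d=\mathrm{dist}(\theta,\partial C).
\]
This yields the pointwise bound $|(-\Delta)^s u_s(\theta)|\leq c\,(1+\mathrm{dist}(\theta,\partial\omega)^{-s})$, which is integrable on $S^{n-1}$ because $s<1$.

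Combining the two factors concludes that $(-\Delta)^s u_s\in\mathcal{L}^1_\delta$ for every $\delta>0$ (and in fact even for $\delta=0$, by the margin provided by $\gamma_s(C)<2s$). The main difficulty is precisely the local boundary estimate of the previous paragraph: one has to balance the Hölder-type vanishing $\mathrm{dist}(y,\partial C)^s$ of $u_s$ against the singularity $|\theta-y|^{-n-2s}$ of the kernel, and the $C^{1,1}$ regularity of the cone is essential for this — it is what makes the flattening diffeomorphism available and yields the integrable singularity $d^{-s}$.
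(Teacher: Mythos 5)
Your proposal is correct, and its skeleton coincides with the paper's: reduce to $\mathbb{R}^n\setminus C$ by $s$-harmonicity, factor the integral by the $(\gamma_s-2s)$-homogeneity of $(-\Delta)^su_s$, check the radial factor using $0<\gamma_s<2s$, and control the angular factor through a pointwise bound $|(-\Delta)^su_s(\theta)|\lesssim 1+\mathrm{dist}(\theta,\partial C)^{-s}$ on $S^{n-1}$, which is integrable because $s<1$. The only genuine difference is how that near-boundary bound is obtained. You flatten $\partial C$ with a local $C^{1,1}$ diffeomorphism and invoke the model computation $\int_{\{z_n>0\}}z_n^s\,(|z'|^2+(z_n+d)^2)^{-(n+2s)/2}\mathrm{d}z\simeq d^{-s}$ (correct, though the constant depends on $s$ as well as $n$). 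The paper avoids any flattening: for $x\notin C$ and $y\in C$ one has the elementary inequality $\mathrm{dist}(y,\partial C)\le |x-y|$ and also $|x-y|\ge \mathrm{dist}(x,\partial C)$, so the upper bound in \eqref{c(s)original} turns the near-field integral into $\int_{\mathrm{dist}(x,\partial C)}^{R}r^{-1-s}\mathrm{d}r\lesssim \mathrm{dist}(x,\partial C)^{-s}$ directly. Your route buys a sharper, essentially exact asymptotic constant for the model geometry, at the price of a boundary-straightening step that is not needed; the paper's route is shorter and makes clear that the $d^{-s}$ bound is purely metric. Relatedly, your closing claim that the $C^{1,1}$ regularity is essential \emph{because} it provides the flattening map slightly misplaces where the regularity enters: it is needed to have the two-sided boundary estimate \eqref{c(s)original} in the first place (via boundary Harnack/Green function estimates) and to guarantee that $\mathrm{dist}(\cdot,\partial C)^{-s}$ is integrable on a tubular neighborhood of the $(n-2)$-dimensional set $\partial C\cap S^{n-1}$, not to enable the flattening, which the argument can dispense with altogether.
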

\begin{proof}
Since the function $u_s$ is $s$-harmonic in $C$, namely $(-\Delta)^s u_s (x)=0$ for all $x\in C$, we can restrict the domain of integration to $\R^n \setminus C$.\\
By homogeneity and the results in \cite{MR1671973}, we have that the function $(-\Delta)^s u_s$ is $(\gamma_s-2s)$-homogeneous and in particular $x\mapsto (-\Delta)^s u_s(x)$ is a continuous negative function, for every $x\in D \subset\subset \R^n\setminus C$. In order to compute the previous integral, we focus our attention on the restriction of the fractional Laplacian to the sphere $S^{n-1}$, in particular, we prove that there exists $\bar\varepsilon>0$ and $C>0$ such that \begin{equation}\label{estimate}
\abs{(-\Delta)^s u_s(x) }\leq \frac{C}{\mbox{dist}(x,\partial C)^s}\quad \forall x \in N_{\bar\varepsilon}(\partial C)\cap S^{n-1}, 
\end{equation}
where $N_\varepsilon(\partial C)=\{x \in \R^n\setminus C : \mbox{dist}(x,\partial C) \leq \varepsilon\}$ is the tubular neighborhood of $\partial C$ .\\
Hence, fixed $R>0$ small enough, consider initially $\varepsilon<R$ and $x\in S^{n-1}\cap N_\varepsilon(\partial C)$: since $u_s(y) \leq  \abs{y}^{\gamma_s}$ in $\mathbb{R}^n$ and by \eqref{c(s)original} there exists a constant $C>0$ such that for every $y\in C$ we have
$$ u_s(y) \leq C \abs{y}^{\gamma_s -s}\mbox{dist}(y,\partial C)^s, $$
it follows, defining $\delta(x):= \mbox{dist}(x,\partial C)>0$, that
\begin{align*}
\abs{(-\Delta)^s u_s (x)} &= C(n,s)\int_{C\cap B_R(x)}{\frac{u_s(y)}{\abs{x-y}^{n+2s}}\mathrm{d}y} + C(n,s)\int_{C\setminus B_R (x)}{\frac{u_s(y)}{\abs{x-y}^{n+2s}}\mathrm{d}y}\\
 &\leq C(n,s)\int_{C\cap B_R(x)}{\frac{C \abs{y}^{\gamma_s-s} \mbox{dist}(y,\partial C)^s}{\abs{x-y}^{n+2s}}\mathrm{d}y} + C(n,s)\int_{C\setminus B_R (x)}{\frac{\abs{y}^{\gamma_s}}{\abs{x-y}^{n+2s}}\mathrm{d}y}.
\end{align*}
Since $C\cap B_R(x) \subset B_{R}(x) \setminus B_{\delta(x)}(x)$, we have
\begin{align*}
\abs{(-\Delta)^s u_s(x)}
&\leq
C\int_{R\geq\abs{x-y}\geq\delta(x)}{\frac{\abs{y}^{\gamma_{s}-s}}{\abs{x-y}^{n+s}}\mathrm{d}y}+
\int_{\abs{x-y}\geq R}{\frac{(\abs{x-y}+1)^{\gamma_{s}}}{\abs{x-y}^{n+2s}}\mathrm{d}y}\\
&\leq C\int_{R\geq\abs{x-y}\geq\delta(x)}{\frac{1}{\abs{x-y}^{n+s}}\mathrm{d}y}+
\omega_{n-1}\int^\infty_{R}{\frac{(t+1)^{\gamma_{s}}}{t^{1+2s}}\mathrm{d}t}\\
&\leq C\int_{\delta(x)}^R{\frac{1}{r^{1+s}}\mathrm{d}r} + M\\
& \leq C\frac{1}{\mbox{dist}(x,\partial C)^s} + M.
\end{align*}
Moreover, again since $s\in (0,1)$, up to consider a smaller neighborhood $N_\varepsilon (\partial C)$, we obtain that there exists a constant $\bar\varepsilon>0$ small enough and $C>0$ such that
$$
\abs{(-\Delta)^s u_s(x)}\leq \frac{C}{\mbox{dist}(x,\partial C)^s} \quad \mbox{for every } x \in N_{\bar\varepsilon}(\partial C) \cap S^{n-1}.
$$
Now, fixed $\delta>0$ and considered $\bar\varepsilon>0$ of \eqref{estimate}, we have
\begin{align*}
\int_{\R^n\setminus C}{\frac{\abs{(-\Delta)^s u_s (x)}}{(1+\abs{x})^{n+2\delta}}\mathrm{d}x} &= \int_{\R^n\setminus C}{\frac{\abs{x}^{\gamma_s-2s}\abs{(-\Delta)^s u_s\Big(\frac{x}{\abs{x}} \Big)}}{(1+\abs{x})^{n+2\delta}}\mathrm{d}x}\\
& = \int_{0}^{\infty}{\int_{S^{n-1}\cap (\mathbb{R}^n\setminus C)}{\frac{r^{\gamma_s -2s}\abs{(-\Delta)^s u_s(z)}}{(1+r)^{n+2\delta}}r^{n-1}\mathrm{d}\sigma(z)}\mathrm{d}r}\\
&= \int_0^\infty{\frac{r^{n-1+\gamma_s-2s}}{(1+r)^{n+2\delta}}\mathrm{d}r}\int_{S^{n-1}\cap (\mathbb{R}^n\setminus C)}{\abs{(-\Delta)^s u_s(z)}\mathrm{d}\sigma}.
\end{align*}
Since $\gamma_s \in (0,2s)$ and $s\in (0,1)$, it follows
\begin{align*}
\int_{\R^n\setminus C}{\frac{\abs{(-\Delta)^s u_s (x)}}{(1+\abs{x})^{n+2\delta}}\mathrm{d}x} &\leq  C\int_{S^{n-1}\cap N_{\bar\varepsilon}(\partial C)}{\abs{(-\Delta)^s u_s(z)}\mathrm{d}\sigma}+C\int_{((\R^n\setminus C)\setminus N_{\bar\varepsilon}(\partial C))\cap S^{n-1}}{\abs{(-\Delta)^s u_s(z)}\mathrm{d}\sigma}\\
&\leq C\int_{S^{n-1}\cap N_{\bar\varepsilon}(\partial C)}{\frac{1}{\mbox{dist}(z,\partial C)^s}\mathrm{d}\sigma}+M\\
&< +\infty
\end{align*}
where in the second inequality we used that $z\mapsto (-\Delta)^s u_s(z)$ is continuous in every $A\subset \subset S^{n-1}\cap (\R^n\setminus C)$ and in the last one that $\mbox{dist}(x,\partial C)^{-s}\in L^1(S^{n-1}\cap N_{\bar\varepsilon}(\partial C),\mathrm{d}\sigma)$.
\end{proof}
\begin{Proposition}[Distributional semigroup property]\label{semigroup}
Let $u_s$ be a solution of \eqref{Pstheta} with $C$ a regular cone and consider $\delta \in (0,1-s)$. Then
$$
(-\Delta)^{s+\delta}u_s = (-\Delta)^\delta [(-\Delta)^s u_s] \quad \mathrm{in}\ \mathcal{D}'(C)
$$
or equivalently
$$
( (-\Delta)^{s+\delta}u_s, \varphi ) = ((-\Delta)^\delta [(-\Delta)^s u_s], \varphi), \quad \forall \varphi \in C^\infty_c(C).
$$
\end{Proposition}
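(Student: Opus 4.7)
The plan is to test the claimed identity against an arbitrary $\varphi \in C^\infty_c(C)$, unfold both sides via the distributional definition of the fractional Laplacian, and match them through the classical pointwise semigroup property on Schwartz functions together with a symmetry/Fubini step. Both distributions are well-defined: $u_s \in \mathcal{L}^1_s \subset \mathcal{L}^1_{s+\delta}$ since $\gamma_s < 2s$, so $(-\Delta)^{s+\delta}u_s$ makes sense, while Lemma \ref{lemma.estimate} gives $(-\Delta)^s u_s \in \mathcal{L}^1_\delta$, legitimising the iterated right-hand side.

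Applying the distributional definition to both sides, for every $\varphi \in C^\infty_c(C)$ one has
\[
\langle (-\Delta)^{s+\delta}u_s,\varphi\rangle = \int u_s\,(-\Delta)^{s+\delta}\varphi\,dx, \qquad \langle (-\Delta)^\delta[(-\Delta)^s u_s],\varphi\rangle = \int (-\Delta)^s u_s \cdot (-\Delta)^\delta\varphi\,dx,
\]
both integrals being absolutely convergent thanks to the decay estimates provided by Proposition \ref{labogdan}. Since $\varphi \in \mathcal{S}(\mathbb{R}^n)$, the Fourier multiplier identity $|\xi|^{2(s+\delta)} = |\xi|^{2s}|\xi|^{2\delta}$ yields the classical pointwise semigroup $(-\Delta)^{s+\delta}\varphi = (-\Delta)^s\bigl[(-\Delta)^\delta\varphi\bigr]$ as smooth functions on $\mathbb{R}^n$. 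Setting $\psi := (-\Delta)^\delta\varphi$, which is $C^\infty$ with $|\psi(x)| \leq c(1+|x|)^{-n-2\delta}$, the claim therefore reduces to the symmetry identity
\[
\int u_s \cdot (-\Delta)^s \psi\,dx = \int (-\Delta)^s u_s \cdot \psi\,dx.
\]

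The main obstacle is exactly this last identity: $\psi$ is smooth with algebraic decay but is not compactly supported, so Lemma \ref{epsilon} is not directly applicable to the pair $(u_s,\psi)$. The plan is to proceed by truncation. With $\eta_R \in C^\infty_c(B_{2R}(0))$ equal to one on $B_R(0)$, the function $\psi\eta_R$ belongs to $C^\infty_c(\mathbb{R}^n)$, and Lemma \ref{epsilon} (after letting $\varepsilon \to 0$) delivers
\[
\int u_s \cdot (-\Delta)^s(\psi\eta_R)\,dx = \int (-\Delta)^s u_s \cdot \psi\eta_R\,dx.
\]
The right-hand side converges as $R\to\infty$ to the desired limit by dominated convergence, using $(-\Delta)^s u_s \in \mathcal{L}^1_\delta$ and the decay of $\psi$. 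For the left-hand side, I decompose $(-\Delta)^s(\psi\eta_R) = (-\Delta)^s\psi - (-\Delta)^s[\psi(1-\eta_R)]$ and estimate the correction: since $\psi(1-\eta_R)$ is supported in $\{|y|\geq R\}$ with the decay of $\psi$, a direct computation based on the pointwise representation of the fractional Laplacian shows that $|(-\Delta)^s[\psi(1-\eta_R)](x)|$ is bounded by $cR^{-n-2\delta}$ on compact sets with $|x|\leq R/2$ and by $c(1+|x|)^{-n-2s-2\delta}$ globally, so that, paired with the growth control $u_s(x)\leq c|x|^{\gamma_s}$ and $\gamma_s < 2s$, the correction contributes vanishingly in the limit. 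This yields the required symmetry and closes the proof.
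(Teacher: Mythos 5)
Your reduction coincides with the paper's up to the key identity: both arguments write $((-\Delta)^{s+\delta}u_s,\varphi)=(u_s,(-\Delta)^{s+\delta}\varphi)$, use the classical semigroup property on $\varphi\in\mathcal{S}(\R^n)$, use Lemma \ref{lemma.estimate} to give meaning to $(-\Delta)^\delta[(-\Delta)^su_s]$, and reduce everything to the symmetry identity $\int u_s\,(-\Delta)^s\psi\,\mathrm{d}x=\int (-\Delta)^su_s\,\psi\,\mathrm{d}x$ with $\psi=(-\Delta)^\delta\varphi$, i.e.\ to \eqref{finalpart}. Where you genuinely diverge is in proving this identity: the paper truncates the \emph{growing} factor, works with $u_s\eta_R$, and must then run the long five-region estimate ($I_1,\dots,I_5$) to show $\int(-\Delta)^s(u_s\eta_R)(-\Delta)^\delta\varphi\to\int(-\Delta)^su_s(-\Delta)^\delta\varphi$; you truncate the \emph{decaying} factor $\psi$, so the whole difficulty collapses to showing $\int u_s\,(-\Delta)^s[\psi(1-\eta_R)]\,\mathrm{d}x\to0$, a tail estimate for a function supported in $\{|y|\ge R\}$ with decay $|y|^{-n-2\delta}$. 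This is a leaner route and it does work, but not with the bounds you state.

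Two concrete repairs are needed. First, for $|x|\le R/2$ the correct elementary bound is $|(-\Delta)^s[\psi(1-\eta_R)](x)|\le C R^{-n-2s-2\delta}$, since the integrand lives at distance $\ge R/2$ from $x$ (so $|x-y|^{-n-2s}\lesssim |y|^{-n-2s}$) ; your stated $R^{-n-2\delta}$, if used on all of $B_{R/2}$ against $u_s(x)\le|x|^{\gamma_s}$, only yields a contribution of order $R^{\gamma_s-2\delta}$, which does \emph{not} vanish when $\gamma_s>2\delta$ (the typical situation, as $\delta<1-s$ is small). Second, the claimed global bound $c(1+|x|)^{-n-2s-2\delta}$ uniform in $R$ is false for $|x|\gg R$: there $(-\Delta)^s[\psi(1-\eta_R)](x)\approx -C(n,s)|x|^{-n-2s}\int\psi(1-\eta_R)\,\mathrm{d}y$, and the tail mass is only $O(R^{-2\delta})$, so the true bound is $CR^{-2\delta}(1+|x|)^{-n-2s}$. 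Fortunately the corrected bounds still close the argument: one dominates by $C(1+|x|)^{-n-2s}$, which is integrable against $u_s$ precisely because $\gamma_s<2s$, and uses pointwise convergence to $0$. Finally, the step ``Lemma \ref{epsilon} (after letting $\varepsilon\to0$)'' hides a real issue: the support of $\psi\eta_R$ meets $\partial C$, so passing to the limit in $((-\Delta)^s_\varepsilon u_s,\psi\eta_R)$ requires a majorant of $(-\Delta)^s_\varepsilon u_s$, uniform in $\varepsilon$ and locally integrable across $\partial C$. Outside $\overline C$ the truncated integrals are monotone in $\varepsilon$ and Lemma \ref{lemma.estimate} suffices, but inside $C$ near the boundary you must additionally produce a $\mathrm{dist}(x,\partial C)^{-s}$-type bound for the truncated operator (using \eqref{c(s)original} and interior derivative estimates); this $\varepsilon$-uniform analogue of Lemma \ref{lemma.estimate} is the one ingredient your route needs that the paper's statement of that lemma does not literally provide.
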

\begin{proof}
Since $\abs{u_s(x)}\leq \abs{x}^{\gamma_s}$, with $\gamma_s \in (0,2s)$, it is easy to see that $u_s \in \mathcal{L}^1_s \cap C^2(C)$. Moreover, as we have already remarked, if $u_s\in\mathcal{L}^1_s$ then $u_s\in\mathcal{L}^1_{s+\delta}$ for every $\delta >0$. In particular, $(-\Delta)^{s+\delta}u_s$ does exist and it is a continuous function of $x\in C$, for every $\delta \in (0,1-s)$.
By definition of the distributional fractional Laplacian, we obtain
$$
((-\Delta)^{s+\delta} u_s, \varphi) =(u_s,(-\Delta)^{s+\delta} \varphi),
$$
and since for $\varphi \in C^\infty_c(C)\subset\mathcal{S}(\mathbb{R}^n)$ in the Schwarz space, the classic semigroup property holds, we obtain that
$$
((-\Delta)^{s+\delta} u_s, \varphi) =(u_s,(-\Delta)^{s}[(-\Delta)^\delta \varphi]).
$$
On the other hand, since by Lemma \ref{lemma.estimate} we have 
$(-\Delta)^s u_s \in \mathcal{L}^1_\delta$, it follows
\begin{equation}\label{disteps}
((-\Delta)^\delta_\varepsilon[(-\Delta)^s u_s],\varphi)=((-\Delta)^s u_s, (-\Delta)^\delta_\varepsilon \varphi)
\end{equation}
for every $\varepsilon >0$. Since $(-\Delta)^su_s\in\mathcal{L}^1_\delta$ and $\varphi\in C^\infty_c(\mathbb{R}^n)$, the $\delta$-Laplacian of $(-\Delta)^su_s$ does exists in a distributional sense and hence the left hand side in \eqref{disteps} does converge to $((-\Delta)^\delta[(-\Delta)^s u_s],\varphi)$ as $\varepsilon\to0$. Moreover the right hand side in \eqref{disteps} does converge to $((-\Delta)^s u_s, (-\Delta)^\delta \varphi)$ by the dominated convergence theorem, using Proposition \ref{labogdan} and Lemma \ref{lemma.estimate} which give
$$\int_{\mathbb{R}^n}(-\Delta)^su_s(x)(-\Delta)^\delta_\varepsilon\varphi(x)\mathrm{d}x\leq\int_{\mathbb{R}^n}\frac{|(-\Delta)^su_s(x)|}{(1+|x|)^{n+2\delta}}\mathrm{d}x<+\infty.$$
By the previous remarks, 
$$
((-\Delta)^\delta[(-\Delta)^s u_s],\varphi)=((-\Delta)^s u_s, (-\Delta)^\delta \varphi).
$$
In order to conclude the proof of the distributional semigroup property, we need to show that
\begin{equation}\label{finalpart}
(u_s,(-\Delta)^s[(-\Delta)^{\delta}\varphi]) = ((-\Delta)^s u_s,(-\Delta)^{\delta}\varphi),
\end{equation}
which is not a trivial equality, since $(-\Delta)^\delta\varphi \in C^\infty(\R^n)$ is no more compactly supported.\\\\
Let $\eta\in C^\infty_c(B_2(0))$ be a radial cutoff function such that $\eta\equiv 1$ in $B_1(0)$ and $0\leq\eta\leq 1$ in $B_2(0)$, and define $\eta_R(x)=\eta(x/R)$, for $R>0$. Obviously, since $u_s\eta_R \in C_c(\mathbb{R}^n)$ and $(-\Delta)^\delta\varphi \in \mathcal{L}^1_s$, by Lemma \ref{epsilon} we have
\begin{equation}\label{epsR}
(u_s\eta_R,(-\Delta)^s_{\varepsilon}[(-\Delta)^{\delta}\varphi]) = ((-\Delta)^s_{\varepsilon} (u_s\eta_R),(-\Delta)^{\delta}\varphi)
\end{equation}
for every $\varepsilon,R>0$. First, for $R>0$ fixed, we want to pass to the limit for $\varepsilon\to0$. For the left hand side in \eqref{epsR}, we get the convergence to $(u_s\eta_R,(-\Delta)^s[(-\Delta)^{\delta}\varphi])$ since we can apply the dominated convergence theorem. In fact
$$\int_{\mathbb{R}^n}u_s\eta_R(-\Delta)^s_\varepsilon[(-\Delta)^\delta\varphi]\leq c\int_K(-\Delta)^{s+\delta}\varphi<+\infty,$$
where $K$ denotes the support of $u_s\eta_R$. For the right hand side in \eqref{epsR} we observe that, for any $x\in\mathbb{R}^n$
\begin{equation*}
(-\Delta)^s_\varepsilon(u_s\eta_R)(x)=\eta_R(x)(-\Delta)^s_\varepsilon u_s(x)+u_s(x)(-\Delta)^s_\varepsilon\eta_R(x)-I_\varepsilon(u_s,\eta_R)(x),
\end{equation*}
where
$$I_\varepsilon(u_s,\eta_R)(x)=C(n,s)\int_{\mathbb{R}^n\setminus B_\varepsilon(x)}\frac{(u_s(x)-u_s(y))(\eta_R(x)-\eta_R(y))}{|x-y|^{n+2s}}\mathrm{d}y.$$
Obviously the first term $((-\Delta)^s_\varepsilon u_s,\eta_R(-\Delta)^\delta\varphi)\to((-\Delta)^s u_s,\eta_R(-\Delta)^\delta\varphi)$ by definition of the  distributional $s$-Laplacian, since $u_s\in\mathcal{L}^1_s$ and $\eta_R(-\Delta)^\delta\varphi\in C^\infty_c(\mathbb{R}^n)$. The second term $(u_s(-\Delta)^s_\varepsilon\eta_R,(-\Delta)^\delta\varphi)\to(u_s(-\Delta)^s\eta_R,(-\Delta)^\delta\varphi)$ by dominated convergence, since
$$\int_{\mathbb{R}^n}{u_s(-\Delta)^s_\varepsilon\eta_R(-\Delta)^\delta\varphi \mathrm{d}x} \leq c\int_{\mathbb{R}^n}{\frac{u_s(x)}{(1+|x|)^{n+2s}}\mathrm{d}x}.$$ Finally, the last term $(I_\varepsilon(u_s,\eta_R),(-\Delta)^\delta\varphi)\to(I(u_s,\eta_R),(-\Delta)^\delta\varphi)$ by dominated convergence, since
$$\int_{\mathbb{R}^n}{I_\varepsilon(u_s,\eta_R)(-\Delta)^\delta\varphi\mathrm{d}x}\leq C\int_{\mathbb{R}^n}{\abs{(-\Delta)^\delta\varphi}\mathrm{d}x}
,
$$
which is integrable by Proposition \ref{labogdan}.
Finally, passing to the limit for $\varepsilon\to0$, from \eqref{epsR} we get 
\begin{equation}\label{epsR2}
(u_s\eta_R,(-\Delta)^s[(-\Delta)^{\delta}\varphi]) = ((-\Delta)^s(u_s\eta_R),(-\Delta)^{\delta}\varphi),
\end{equation}
for every $R>0$.\\\\
Now we want to prove \eqref{finalpart}, concluding this proof, by passing to the limit in \eqref{epsR2} for $R\to+\infty$. Since we know, by dominated convergence, that the left hand side converges to $(u_s,(-\Delta)^s(-\Delta)^\delta \varphi)$ for $R\to \infty$, we focus our attention on the other one.
At this point, we need to prove that for any $\varphi\in C^{\infty}_c(C)$,
\begin{equation}\label{convR}
\int_{\mathbb{R}^n}(-\Delta)^s(u_s\eta_R)(-\Delta)^\delta\varphi\longrightarrow\int_{\mathbb{R}^n}(-\Delta)^su_s(-\Delta)^\delta\varphi,
\end{equation}
as $R\to+\infty$. First of all, we remark that $(-\Delta)^s(u_s\eta_R)\to(-\Delta)^su_s$ in $L^1_{\mathrm{loc}}(\mathbb{R}^n)$. In fact, let $K\subset\mathbb{R}^n$ be a compact set. There exists $\overline r>0$ such that $K\subset B_{\overline r}$. Then, considering any radius $R>\overline r$, $\eta_R(x)=1$ for any $x\in K$. Hence, for any $R>\overline r$, using the fact that $u_s(x)=|x|^{\gamma_s}u_s(x/|x|)$, we obtain
\begin{align*}
\int_K|(-\Delta)^s(u_s\eta_R)(x)-(-\Delta)^su_s(x)|\mathrm{d}x&=\int_K\mathrm{d}x\left|C(n,s)\mbox{P.V}\int_{\mathbb{R}^n}\frac{u_s(x)\eta_R(x)-u_s(y)\eta_R(y)+u_s(y)-u_s(x)}{|x-y|^{n+2s}}\mathrm{d}y\right|\nonumber\\
&=C(n,s)\int_K\mathrm{d}x\left(\mbox{P.V}\int_{C\setminus B_{R}}\frac{u_s(y)[1-\eta_R(y)]}{|x-y|^{n+2s}}\mathrm{d}y\right)\\
&\leq C(n,s)\int_K\mathrm{d}x\left(\mbox{P.V}\int_{C\setminus B_{R}}\frac{|y|^{\gamma_s}}{(|y|-\overline r)^{n+2s}}\mathrm{d}y\right)\\
&\leq C(n,s)\int_K\mathrm{d}x\left(\mbox{P.V}\int_{C\setminus B_{R}}\frac{|y|^{\gamma_s}}{|y|^{n+2s}(1-\frac{\overline r}{R})^{n+2s}}\mathrm{d}y\right)\\
&= C\left(\frac{R}{R-\overline r}\right)^{n+2s}\lim_{\rho\to+\infty}\int_{R}^\rho\frac{1}{r^{2s-\gamma_s+1}}\mathrm{d}r\nonumber\\
&= C\left(\frac{R}{R-\overline r}\right)^{n+2s}\frac{1}{R^{2s-\gamma_s}}\longrightarrow 0,
\end{align*}
as $R\to+\infty$. Hence we obtain also pointwise convergence almost everywhere. Moreover, we can give the following expression
\begin{equation}
(-\Delta)^s(u_s\eta_R)(x)=\eta_R(x)(-\Delta)^su_s(x)+C(n,s)\mbox{P.V.}\int_{\mathbb{R}^n}u_s(y)\frac{\eta_R(x)-\eta_R(y)}{|x-y|^{n+2s}}\mathrm{d}y.
\end{equation}
We remark that $\eta_R(x)(-\Delta)^su_s(x)\to(-\Delta)^su_s(x)$ and $\int_{\mathbb{R}^n}u_s(y)\frac{\eta_R(x)-\eta_R(y)}{|x-y|^{n+2s}}\mathrm{d}y\to0$ pointwisely. Moreover we can dominate the first term in the following way
\begin{equation*}
\eta_R(x)(-\Delta)^su_s(x)\leq(-\Delta)^su_s(x),
\end{equation*}
and
\begin{equation*}
\int_{\mathbb{R}^n}(-\Delta)^su_s(x)(-\Delta)^\delta\varphi(x)\mathrm{d}x<+\infty
\end{equation*}
since $(-\Delta)^s u_s \in \mathcal{L}^1_\delta$ and using Proposition \ref{labogdan} over $\varphi \in C^\infty_c(C)$. In order to prove \eqref{convR}, we want to apply the dominated convergence theorem, and hence we need the following condition for any $R>0$
\begin{equation*}
I:=\left|\int_{\mathbb{R}^n}(-\Delta)^\delta\varphi(x)\left(\mbox{P.V.}\int_{\mathbb{R}^n}u_s(y)\frac{\eta_R(x)-\eta_R(y)}{|x-y|^{n+2s}}\mathrm{d}y\right)\mathrm{d}x\right|\leq c.
\end{equation*}
Therefore, we will obtain a stronger condition; that is, the existence of a value $k>0$ such that for any $R>1$
$$I\leq\frac{c}{R^k}.$$
We split the region of integration $\mathbb{R}^n\times\mathbb{R}^n$ into five different parts; that is,
\begin{equation*}
\Omega_1:=(\mathbb{R}^n\setminus B_{2R})\times\mathbb{R}^n, \ \Omega_2:=B_{2R}\times B_{2R}, \ \Omega_3:=(B_{2R}\setminus B_R)\times(B_{3R}\setminus B_{2R}),
\end{equation*}
\begin{equation*}
\Omega_4:=(B_{2R}\setminus B_R)\times(\mathbb{R}^n\setminus B_{3R}), \ \Omega_5:=B_R\times(\mathbb{R}^n\setminus B_{2R}).
\end{equation*}
First of all, we remark that $(-\Delta)^s\eta_R(x)=R^{-2s}(-\Delta)^s\eta(x/R)$ and also that $||(-\Delta)^s\eta||_{L^\infty(\mathbb{R}^n)}<+\infty$. For the first term, using the fact that $\eta_R(x)-\eta_R(y)=0$ if $(x,y)\in(\mathbb{R}^n\setminus B_{2R})\times(\mathbb{R}^n\setminus B_{2R})$
\begin{eqnarray*}
I_1&:=&\int_{\mathbb{R}^n\setminus B_{2R}}|(-\Delta)^\delta\varphi(x)|\left|\int_{\mathbb{R}^n}u_s(y)\frac{\eta_R(x)-\eta_R(y)}{|x-y|^{n+2s}}\mathrm{d}y\right|\mathrm{d}x\nonumber\\
&\leq & \int_{\R^n \setminus B_{2R}}{\abs{(-\Delta)^\delta \varphi(x)}\abs{\int_{B_{2R}}{u_s(y)\frac{\eta_R(x)-\eta_R(y)}{\abs{x-y}^{n+2s}}}}\mathrm{d}x}\nonumber\\
&\leq&\int_{\mathbb{R}^n\setminus B_{2R}}|(-\Delta)^\delta\varphi(x)|\left(\sup_{B_{2R}}u_s\right)|(-\Delta)^s\eta_R(x)|\mathrm{d}x\nonumber\\
&\leq&\frac{c}{R^{2s-\gamma_s}}\int_{\mathbb{R}^n}\frac{1}{(1+|x|)^{n+2\delta}}\mathrm{d}x\leq \frac{c}{R^{2s-\gamma_s}}.
\end{eqnarray*}
For the second term, using the fact that $\eta_R(x)-\eta_R(y)\geq 0$ if $(x,y)\in B_{2R}\times(\mathbb{R}^n\setminus B_{2R})$, we obtain as before
\begin{eqnarray*}
I_2&:=&\int_{B_{2R}}|(-\Delta)^\delta\varphi(x)|\left|\int_{B_{2R}}u_s(y)\frac{\eta_R(x)-\eta_R(y)}{|x-y|^{n+2s}}\mathrm{d}y\right|\mathrm{d}x\nonumber\\
&\leq&\int_{B_{2R}}|(-\Delta)^\delta\varphi(x)|\left(\sup_{B_{2R}}u_s\right)|(-\Delta)^s\eta_R(x)|\mathrm{d}x\nonumber\\
&\leq&\frac{c}{R^{2s-\gamma_s}}\int_{\mathbb{R}^n}\frac{1}{(1+|x|)^{n+2\delta}}\mathrm{d}x\leq \frac{c}{R^{2s-\gamma_s}}.
\end{eqnarray*}
For the third part
\begin{equation*}
I_3:=\int_{B_{2R}\setminus B_R}|(-\Delta)^\delta\varphi(x)|\left|\int_{B_{3R}\setminus B_{2R}}u_s(y)\frac{\eta_R(x)-\eta_R(y)}{|x-y|^{n+2s}}\mathrm{d}y\right|\mathrm{d}x,
\end{equation*}
we consider the following change of variables $\xi=x/R\in B_2\setminus B_1$ and $\zeta=y/R\in B_3\setminus B_2$. Hence, using the $\gamma_s$-homogeneity of $u_s$ and the definition of our cut-off functions, we obtain
\begin{equation*}
I_3\leq\frac{R^{2n}}{R^{n+2s-\gamma_s}}\iint_{(B_2\setminus B_1)\times(B_3\setminus B_2)}|(-\Delta)^\delta\varphi(R\xi)|u_s(\zeta)\frac{\eta(\xi)-\eta(\zeta)}{|\xi-\zeta|^{n+2s}}\mathrm{d}\xi\mathrm{d}\zeta.
\end{equation*}
We use the fact that $u_s\in C^{0,s}(B_3\setminus B_1)$ (see \eqref{c(s)original} proved in \cite{MR2213639}) and the cut off function $\eta\in\mathrm{Lip}(B_3\setminus B_1)$; that is, there exists a constant $c>0$ such that
\begin{equation}\label{sholdlip}
|u_s(\xi)-u_s(\zeta)|\leq c|\xi-\zeta|^{s}\qquad\mathrm{and}\qquad |\eta(\xi)-\eta(\zeta)|\leq c|\xi-\zeta|,
\end{equation}
for every $\xi,\zeta \in B_3\setminus B_1$. Hence,
\begin{eqnarray*}
I_3&\leq&
\frac{R^{2n}}{R^{n+2s-\gamma_s}}\iint_{(B_2\setminus B_1)\times(B_3\setminus B_2)}|(-\Delta)^\delta\varphi(R\xi)|\frac{\abs{u_s(\zeta)-u_s(\xi)}\abs{\eta(\xi)-\eta(\zeta)}}{|\xi-\zeta|^{n+2s}}\mathrm{d}\xi\mathrm{d}\zeta\nonumber\\
&&+ \ \frac{R^{2n}}{R^{n+2s-\gamma_s}}\iint_{(B_2\setminus B_1)\times(B_3\setminus B_2)}|(-\Delta)^\delta\varphi(R\xi)|u_s(\xi)\frac{\abs{\eta(\xi)-\eta(\zeta)}}{|\xi-\zeta|^{n+2s}}\mathrm{d}\xi\mathrm{d}\zeta\nonumber\\
&=&J_1+J_2.
\end{eqnarray*}
By \eqref{sholdlip}, we obtain
\begin{eqnarray*}
J_1&\leq& c\frac{R^{2n}}{R^{n+2s-\gamma_s}}\iint_{(B_2\setminus B_1)\times(B_3\setminus B_2)}|(-\Delta)^\delta\varphi(R\xi)|\frac{|\xi-\zeta|^{s+1}}{|\xi-\zeta|^{n+2s}}\mathrm{d}\xi\mathrm{d}\zeta\nonumber\\
&\leq& c\frac{R^{2n}}{R^{n+2s-\gamma_s}}\iint_{(B_2\setminus B_1)\times(B_3\setminus B_2)}\frac{1}{(1+R|\xi|)^{n+2\delta}}\frac{1}{|\xi-\zeta|^{n+s-1}}\mathrm{d}\xi\mathrm{d}\zeta\nonumber\\
&\leq& \frac{c}{R^{2s+2\delta-\gamma_s}}\iint_{(B_2\setminus B_1)\times(B_3\setminus B_2)}\frac{1}{|\xi-\zeta|^{n+s-1}}\mathrm{d}\xi\mathrm{d}\zeta\leq \frac{c}{R^{2s+2\delta-\gamma_s}}.
\end{eqnarray*}
Moreover, using other two changes of variable $(\xi,\zeta)\mapsto(\xi,\xi+h)$ and $(\xi,\zeta)\mapsto(\xi,\xi-h)$, we obtain
\begin{eqnarray*}
J_2&\leq& \frac{R^{2n}}{R^{n+2s-\gamma_s}}\iint_{(B_2\setminus B_1)\times(B_3\setminus B_2)}|(-\Delta)^\delta\varphi(R\xi)|u_s(\xi)\frac{\eta(\xi)-\eta(\zeta)}{|\xi-\zeta|^{n+2s}}\mathrm{d}\xi\mathrm{d}\zeta\nonumber\\
&\leq&\frac{R^{2n}}{R^{n+2s-\gamma_s}}\iint_{(B_2\setminus B_1)\times(B_3\setminus B_2)}\frac{1}{(1+R\abs{\xi})^{n+2\delta}}u_s(\xi)\frac{\eta(\xi)-\eta(\zeta)}{|\xi-\zeta|^{n+2s}}\mathrm{d}\xi\mathrm{d}\zeta\nonumber\\
&\leq& \frac{c}{R^{2s+2\delta-\gamma_s}}\iint_{(B_2\setminus B_1)\times(B_3\setminus B_2)}\frac{\eta(\xi)-\eta(\zeta)}{|\xi-\zeta|^{n+2s}}\mathrm{d}\xi\mathrm{d}\zeta\nonumber\\
&\leq& \frac{c}{R^{2s+2\delta-\gamma_s}}\iint_{(B_2\setminus B_1)\times B_2}\frac{2\eta(\xi)-\eta(\xi+h)-\eta(\xi-h)}{|h|^{n+2s}}\mathrm{d}\xi\mathrm{d} h\nonumber\\
&\leq& \frac{c}{R^{2s+2\delta-\gamma_s}}\left(c+\iint_{(B_2\setminus B_1)\times B_\varepsilon}\frac{<\nabla^2\eta(\xi)h,h>}{|h|^{n+2s}}\mathrm{d}\xi\mathrm{d} h\right)\nonumber\\
&\leq& \frac{c}{R^{2s+2\delta-\gamma_s}}\left(c+\iint_{(B_2\setminus B_1)\times B_\varepsilon}\frac{1}{|h|^{n+2s-2}}\mathrm{d}\xi\mathrm{d} h\right) \leq \frac{c}{R^{2s+2\delta-\gamma_s}}.
\end{eqnarray*}
For the fourth part
\begin{equation*}
I_4:=\int_{B_{2R}\setminus B_R}|(-\Delta)^\delta\varphi(x)|\left|\int_{\mathbb{R}^n\setminus B_{3R}}u_s(y)\frac{\eta_R(x)-\eta_R(y)}{|x-y|^{n+2s}}\mathrm{d}y\right|\mathrm{d}x,
\end{equation*}
we consider, as before, the following change of variables $\xi=x/R\in B_2\setminus B_1$ and $\zeta=y/R\in \mathbb{R}^n\setminus B_3$. Hence,
\begin{eqnarray*}
I_4&\leq& c\frac{R^{2n}}{R^{n+2s-\gamma_s}}\iint_{(B_2\setminus B_1)\times(\mathbb{R}^n\setminus B_3)}|(-\Delta)^\delta\varphi(R\xi)|\frac{|\zeta|^{\gamma_s}}{|\zeta-\xi|^{n+2s}}\mathrm{d}\xi\mathrm{d}\zeta\nonumber\\
&\leq& c\frac{R^{2n}}{R^{n+2s-\gamma_s}}\iint_{(B_2\setminus B_1)\times(\mathbb{R}^n\setminus B_3)}\frac{1}{(1+R|\xi|)^{n+2\delta}}\frac{|\zeta|^{\gamma_s}}{|\zeta-\frac{2\zeta}{|\zeta|}|^{n+2s}}\mathrm{d}\xi\mathrm{d}\zeta\nonumber\\
&\leq& \frac{c}{R^{2s+2\delta-\gamma_s}}\iint_{(B_2\setminus B_1)\times(\mathbb{R}^n\setminus B_3)}\frac{|\zeta|^{\gamma_s}}{|\zeta|^{n+2s}(1-\frac{2}{|\zeta|})^{n+2s}}\mathrm{d}\xi\mathrm{d}\zeta\nonumber\\
&\leq& \frac{c}{R^{2s+2\delta-\gamma_s}}\iint_{(B_2\setminus B_1)\times(\mathbb{R}^n\setminus B_3)}\frac{1}{|\zeta|^{n+2s-\gamma_s}}\mathrm{d}\xi\mathrm{d}\zeta\leq \frac{c}{R^{2s+2\delta-\gamma_s}}.
\end{eqnarray*}
Eventually, we consider the last term
\begin{equation*}
I_5:=\int_{B_R}|(-\Delta)^\delta\varphi(x)|\left|\int_{\mathbb{R}^n\setminus B_{2R}}u_s(y)\frac{\eta_R(x)-\eta_R(y)}{|x-y|^{n+2s}}\mathrm{d}y\right|\mathrm{d}x.
\end{equation*}
Hence we obtain
\begin{eqnarray*}
I_5&\leq& c\int_{B_R}|(-\Delta)^\delta\varphi(x)|\left(\int_{\mathbb{R}^n\setminus B_{2R}}\frac{|y|^{\gamma_s}}{|y-x|^{n+2s}}\mathrm{d}y\right)\mathrm{d}x\nonumber\\
&\leq& c\int_{B_R}|(-\Delta)^\delta\varphi(x)|\left(\int_{\mathbb{R}^n\setminus B_{2R}}\frac{|y|^{\gamma_s}}{|y-\frac{Ry}{|y|}|^{n+2s}}\mathrm{d}y\right)\mathrm{d}x\nonumber\\
&\leq& c\int_{B_R}|(-\Delta)^\delta\varphi(x)|\left(\int_{\mathbb{R}^n\setminus B_{2R}}\frac{|y|^{\gamma_s}}{|y|^{n+2s}(1-\frac{R}{|y|})^{n+2s}}\mathrm{d}y\right)\mathrm{d}x\nonumber\\
&\leq& c\int_{B_R}|(-\Delta)^\delta\varphi(x)|\left(\int_{\mathbb{R}^n\setminus B_{2R}}\frac{1}{|y|^{n+2s-\gamma_s}}\mathrm{d}y\right)\mathrm{d}x\nonumber\\
&\leq& c\left(\int_{\mathbb{R}^n}\frac{1}{(1+|x|)^{n+2\delta}}\mathrm{d}x\right)\left(\int_{2R}^{+\infty}\frac{1}{r^{1+2s-\gamma_s}}\mathrm{d}r\right)\nonumber\\
&=& c\left(\int_{\mathbb{R}^n}\frac{1}{(1+|x|)^{n+2\delta}}\mathrm{d}x\right)\left(\lim_{\rho\to+\infty}\int_{2R}^{\rho}\frac{1}{r^{1+2s-\gamma_s}}\mathrm{d}r\right)\nonumber\\
&\leq&\frac{c}{R^{2s-\gamma_s}}.
\end{eqnarray*}
Since $I\leq\sum_{i=1}^5I_i$, we obtain the desired result.
\end{proof}
At this point, fixed $s\in (0,1)$, by the distributional semigroup property we can compute easily high order fractional Laplacians $(-\Delta)^{s+\delta}$ viewing it as the $\delta$-Laplacian of the $s$-Laplacian.
\begin{Corollary}\label{coroll1}
Let $C$ be a regular cone. For every $\delta \in (0,1-s)$, the solution $u_{s}$ of \eqref{Pstheta} is $(s+\delta)$-superharmonic in $C$ in the sense of distribution, i.e.
$$
((-\Delta)^{s+\delta}u_s, \varphi) \geq 0
$$
for every test function $\varphi\in C^\infty_c (C)$ nonnegative in $C$.\\
Moreover, $u_s$ is also superharmonic in $C$ in the sense of distribution, i.e.
$$
(-\Delta u_s, \varphi) \geq 0
$$
for every test function $\varphi\in C^\infty_c (C)$ nonnegative in $C$.
\end{Corollary}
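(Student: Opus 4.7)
The plan is to exploit Proposition \ref{semigroup} to reduce both inequalities to a transparent sign analysis of the function $f:=(-\Delta)^s u_s$ on $\mathbb{R}^n$.

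First I would record the sign structure of $f$. Since $u_s$ satisfies \eqref{Pstheta}, one has $f\equiv 0$ on $C$. For $x\in\mathbb{R}^n\setminus C$ the principal value collapses because $u_s(x)=0$, so
\begin{equation*}
f(x) \;=\; -C(n,s)\int_{\mathbb{R}^n}\frac{u_s(y)}{|x-y|^{n+2s}}\,\mathrm{d}y \;\leq\; 0,
\end{equation*}
since $u_s\geq 0$ and is supported in $C$. Thus $f\leq 0$ everywhere and $f\equiv 0$ on $C$. Moreover, by Lemma \ref{lemma.estimate}, $f\in\mathcal{L}^1_\delta$ for every $\delta\in(0,1-s)$, so its distributional $\delta$-Laplacian is well-defined.

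For the first assertion, I would fix $x\in C$ and observe that $f$ vanishes in a neighborhood of $x$, so by Lemma 3.8 of \cite{MR1671973} the distributional $\delta$-Laplacian of $f$ at $x$ agrees with the classical pointwise formula. Using $f(x)=0$ and $f|_C\equiv 0$, the PV integral reduces to
\begin{equation*}
(-\Delta)^\delta f(x) \;=\; -C(n,\delta)\int_{\mathbb{R}^n\setminus C}\frac{f(y)}{|x-y|^{n+2\delta}}\,\mathrm{d}y \;\geq\; 0.
\end{equation*}
Combining this with the distributional semigroup identity of Proposition \ref{semigroup}, for any nonnegative $\varphi\in C^\infty_c(C)$ we get
\begin{equation*}
((-\Delta)^{s+\delta}u_s,\varphi) \;=\; ((-\Delta)^\delta f,\varphi) \;=\; \int_C (-\Delta)^\delta f(x)\,\varphi(x)\,\mathrm{d}x \;\geq\; 0,
\end{equation*}
proving the first claim. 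Note that the same equality also yields the pointwise inequality $(-\Delta)^{s+\delta}u_s(x)\geq 0$ on $C$, since both distributions are continuous functions there and hence coincide pointwise.

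For the ordinary Laplacian, I would pass to the limit $\delta\to(1-s)^-$. Since $u_s$ is $\gamma_s(C)$-homogeneous with $\gamma_s(C)<2s<2$, it grows strictly subquadratically at infinity, and it is smooth inside $C$; these are exactly the hypotheses needed for the standard consistency
\begin{equation*}
(-\Delta)^{t}u_s(x)\;\longrightarrow\;-\Delta u_s(x)\qquad\text{as }t\to 1^-,
\end{equation*}
at each $x\in C$. Combined with the pointwise inequality from the previous paragraph, this gives $-\Delta u_s(x)\geq 0$ classically on $C$, whence $(-\Delta u_s,\varphi)=\int_C(-\Delta u_s)\varphi\,\mathrm{d}x\geq 0$ for every nonnegative $\varphi\in C^\infty_c(C)$.

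The principal technical step will be the last limit. Its proof splits the double-difference representation of $(-\Delta)^t u_s(x)$ into a local part near $x$, where a second-order Taylor expansion of $u_s$ together with the asymptotic behavior of $C(n,t)$ as $t\nearrow 1$ gives the Laplacian, and a tail part which is controlled by $\gamma_s(C)<2$ via dominated convergence. This is a standard consistency result, but it is the only place where something beyond the sign bookkeeping of the first three steps is used; everything else is an immediate consequence of the semigroup property already established.
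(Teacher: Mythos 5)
Your argument for the first assertion is essentially the paper's: after the distributional semigroup property (Proposition \ref{semigroup}), both you and the paper use that $(-\Delta)^s u_s\equiv 0$ in $C$ and $(-\Delta)^s u_s(x)=-C(n,s)\int_C u_s(y)|x-y|^{-n-2s}\mathrm{d}y\leq 0$ off the cone, together with Lemma \ref{lemma.estimate} to make $(-\Delta)^\delta[(-\Delta)^s u_s]$ meaningful, and conclude nonnegativity of the pairing. Where you genuinely diverge is the passage to the Laplacian. The paper stays at the distributional level: it writes $((-\Delta)^{s+\delta}u_s,\varphi)=(u_s,(-\Delta)^{s+\delta}\varphi)$ and lets $\delta\to 1-s$ on the test-function side, so the only consistency needed is $(-\Delta)^{t}\varphi\to-\Delta\varphi$ for $\varphi\in C^\infty_c$, with domination uniform in $t$ already supplied by Proposition \ref{labogdan} and the growth $u_s(x)\leq|x|^{\gamma_s}$, $\gamma_s<2s$. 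You instead upgrade the distributional inequality to the pointwise one $(-\Delta)^{s+\delta}u_s(x)\geq 0$ in $C$ (legitimate, since this is a continuous function on $C$) and then invoke a consistency statement for $u_s$ itself, namely $(-\Delta)^{t}u_s(x)\to-\Delta u_s(x)$ for a function that is only locally $C^2$ and has algebraic growth below quadratic; your sketch (Taylor expansion for the local part, $C(n,t)\to 0$ killing the tail since $\gamma_s<2s\leq 2t$) is correct but is precisely the extra technical work the paper's duality trick avoids. In compensation, your route gives the slightly stronger conclusion that $-\Delta u_s\geq 0$ holds classically in $C$, not merely in $\mathcal D'(C)$. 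Both proofs are sound; just be sure, if you write yours out in full, to prove the consistency lemma under the stated hypotheses (local $C^2$ regularity plus $|u_s(y)|\leq|y|^{\gamma_s}$ with $\gamma_s<2$) rather than citing the bounded-function version.
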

\begin{proof}
As said before, the facts that $u_s \in \mathcal{L}^1_{s+\delta}$ and $u_s\in C^2(A)$ for every $A\subset \subset C$ ensure the existence of the $(-\Delta)^{s+\delta} u_s$ and the continuity of the map $x\mapsto (-\Delta)^{s+\delta} u_s(x)$ for every $x\in A\subset\subset C$. Hence at this point, the only part we need to prove is the positivity of the $(s+\delta)$-Laplacian in the sense of the distribution, which is a direct consequence of the previous result.
Indeed, since $u_s$ is a solution of the problem \eqref{Pstheta}, by Proposition \ref{semigroup} we know that for every $\varphi\in C^\infty_c (C)$ we have
\begin{align*}
((-\Delta)^{s+\delta}u_s,\varphi) &= ( (-\Delta)^\delta [(-\Delta)^s u_s],\varphi) \\
&= \int_{C}{\varphi(x)\mbox{ P.V.}\!\int_{\R^n}{\frac{(-\Delta)^s u_s(x) - (-\Delta)^s u_s(y)}{\abs{x-y}^{n+2\delta}}\mathrm{d}y}\mathrm{d}x}.
\end{align*}
where $(-\Delta)^\delta [(-\Delta)^s u_s]$ is well defined since that $(-\Delta)^s u_s \equiv 0 \in C^2(A)$ for every $A\subset\subset C$ and, by Lemma \ref{lemma.estimate}, $(-\Delta)^s u_s \in \mathcal{L}^1_\delta$ for every $\delta \in (0,1-s)$.\\ Consider now nonnegative test function $\varphi\geq0 $ in $C$, since $(-\Delta)^s u_s(x) = 0$ for every $x\in C$, we have for every $x\in \R^n\setminus \overline{C}$
$$
(-\Delta)^s u_s(x)=-\int_{C}{\frac{u_s(y)}{\abs{x-y}^{n+2s}}\mathrm{d}y}\leq 0.
$$
Similarly,
\begin{align*}
( (-\Delta)^\delta [(-\Delta)^s u_s],\varphi) &= \int_{C}{\varphi(x)\int_{\R^n}{\frac{- (-\Delta)^s u_s(y)}{\abs{x-y}^{n+2\delta}}\mathrm{d}y}\mathrm{d}x}\geq 0,
\end{align*}
since the support of $\varphi$ is compact in the cone $C$, and so there exists $\varepsilon>0$ such that $|x-y|>\varepsilon$ in the above integral. We have obtained that for any $\delta\in(0,1-s)$ and any nonnegative $\varphi\in C^\infty_c(C)$
\begin{equation*}
((-\Delta)^{s+\delta}u_s,\varphi)\geq 0,
\end{equation*}
then, passing to the limit for $\delta\to 1-s$, the function $u_s$ is superharmonic in the distributional sense
\begin{equation*}
0\leq\lim_{\delta\to 1-s}((-\Delta)^{s+\delta}u_s,\varphi)=\lim_{\delta\to 1-s}(u_s,(-\Delta)^{s+\delta}\varphi)=(u_s,-\Delta\varphi)=(-\Delta u_s,\varphi).
\end{equation*}
\end{proof}

\subsection{Monotonicity of $s\mapsto\gamma_s(C)$}
The following proposition is a consequence of Corollary \ref{coroll1} and it follows essentially the proof of Lemma 2 in \cite{Bogdan.narrow}.
\begin{Proposition}\label{prop1}
For any fixed regular cone $C$ with vertex in $0$, the map $s\mapsto\gamma_s(C)$ is monotone non decreasing in $(0,1)$.
\end{Proposition}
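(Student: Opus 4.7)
The plan is to compare $u_{s_1}$ and $u_{s_2}$ by combining the distributional superharmonicity coming from Corollary \ref{coroll1} with a weak maximum principle on a bounded domain. Fix $0<s_1<s_2<1$ and set $\delta=s_2-s_1\in(0,1-s_1)$; Corollary \ref{coroll1} then yields that $u_{s_1}$ is $s_2$-superharmonic in $C$ in the sense of distributions. Writing $\gamma_i:=\gamma_{s_i}(C)$ for $i=1,2$ and $\omega:=C\cap S^{n-1}$, I argue by contradiction, assuming $\gamma_1>\gamma_2$.

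The key step is to construct a suitable barrier. By the two-sided estimate \eqref{c(s)original}, the ratio $u_{s_1}/u_{s_2}$ is continuous and strictly positive on $\omega$ and behaves like $\mathrm{dist}(\,\cdot\,,\partial\omega)^{s_1-s_2}\to+\infty$ near $\partial\omega$. Consequently
\[
m:=\inf_{\xi\in\omega}\frac{u_{s_1}(\xi)}{u_{s_2}(\xi)}>0
\]
is attained at some interior point $\xi_*\in\omega$. Define $v:=u_{s_1}-m\,u_{s_2}$; this function is continuous on $\mathbb{R}^n$, lies in $\mathcal{L}^1_{s_2}$ (since $\gamma_1,\gamma_2<2s_2$), vanishes on $\mathbb{R}^n\setminus C$, is nonnegative on $\omega$ with a zero at $\xi_*$, and satisfies $(-\Delta)^{s_2}v\geq 0$ distributionally in $C$, because $u_{s_2}$ is $s_2$-harmonic there.

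Exploiting the homogeneities, for every $x\in C$ with $|x|\geq 1$ and $\hat x:=x/|x|\in\omega$,
\[
v(x)=|x|^{\gamma_2}\Bigl(|x|^{\gamma_1-\gamma_2}u_{s_1}(\hat x)-m\,u_{s_2}(\hat x)\Bigr)\geq |x|^{\gamma_2}\bigl(u_{s_1}(\hat x)-m\,u_{s_2}(\hat x)\bigr)\geq 0,
\]
since $\gamma_1>\gamma_2$ gives $|x|^{\gamma_1-\gamma_2}\geq 1$. Together with $v\equiv 0$ outside $C$, this yields $v\geq 0$ on $\mathbb{R}^n\setminus(C\cap B_1)$. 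I would then invoke the weak distributional maximum principle for $s_2$-superharmonic functions on the bounded domain $\Omega:=C\cap B_1$ to conclude $v\geq 0$ in $\Omega$.

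The contradiction comes from the minimizing ray: along $r\mapsto r\xi_*\in\Omega$ with $r\in(0,1)$,
\[
v(r\xi_*)=m\,u_{s_2}(\xi_*)\bigl(r^{\gamma_1}-r^{\gamma_2}\bigr)<0,
\]
because $\gamma_1>\gamma_2$ forces $r^{\gamma_1}<r^{\gamma_2}$ for $r\in(0,1)$, contradicting $v\geq 0$ in $\Omega$. Hence $\gamma_{s_1}(C)\leq\gamma_{s_2}(C)$. The most delicate step is the distributional maximum principle: continuity of $v$ on $\overline\Omega$, its membership in $\mathcal{L}^1_{s_2}$, and its nonnegativity on $\mathbb{R}^n\setminus\Omega$ together rule out a strictly negative interior minimum, since at such a minimum the pointwise singular integral representation would force $(-\Delta)^{s_2}v<0$, contradicting the distributional inequality.
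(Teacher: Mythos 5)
Your proof is correct, and it runs on the same two engines as the paper's own argument: the distributional superharmonicity of $u_{s_1}$ for the higher exponent $s_2=s_1+\delta$ coming from Corollary \ref{coroll1}, and the two-sided boundary estimate \eqref{c(s)original}, combined with an interior extremum argument on $C\cap B_1$ that is converted into a contradiction by localizing the sign of the pointwise fractional Laplacian. Where you genuinely differ is the normalization of the comparison function. The paper scales the two solutions so that $c_1(s_1)=c_2(s_2)$ in \eqref{c(s)original}, works with $h=u_{s_2}-u_{s_1}$, and needs the explicit manipulation $\mathrm{dist}(x,\partial C)\le |x|$ to force $h\le 0$ on $C\setminus B_1$, while homogeneity near the vertex produces an interior positive maximum. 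You instead normalize by $m=\inf_{\omega}u_{s_1}/u_{s_2}$, using \eqref{c(s)original} only qualitatively (the ratio behaves like $\mathrm{dist}(\cdot,\partial\omega)^{s_1-s_2}\to+\infty$, so the infimum is positive and attained at an interior contact point $\xi_*$), after which both the sign of $v=u_{s_1}-m\,u_{s_2}$ outside $C\cap B_1$ and its strict negativity along the ray $r\xi_*$ follow from homogeneity alone; this touching-point device buys a cleaner comparison at infinity at the modest cost of the attainment argument for the ratio. The only step you should spell out instead of invoking a ``weak distributional maximum principle'' as a black box is the bridge from the pointwise to the distributional level: since $v\in C^2_{\mathrm{loc}}(C)\cap\mathcal{L}^1_{s_2}$, the map $x\mapsto(-\Delta)^{s_2}v(x)$ exists, is continuous in $C$, and coincides with the distributional fractional Laplacian there; at a strictly negative global minimum $x_1\in C\cap B_1$ (global over $\mathbb{R}^n$ because $v\ge 0$ off $C\cap B_1$) the principal-value integral gives $(-\Delta)^{s_2}v(x_1)<0$, hence $(-\Delta)^{s_2}v<0$ on some neighborhood $U(x_1)\subset C$, and testing with a nonnegative $\varphi\in C^\infty_c(U(x_1))$ contradicts $((-\Delta)^{s_2}v,\varphi)=((-\Delta)^{s_2}u_{s_1},\varphi)\ge 0$. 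This is precisely the continuity-localization step the paper performs for its function $h$, and with it your argument is complete.
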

\proof
Fixed the cone $C$, let us denote with $\gamma_s$ and $\gamma_{s+\delta}$ respectively the homogeneities of $u_{s}$ and $u_{s+\delta}$. Let us suppose by contradiction that $\gamma_s>\gamma_{s+\delta}$ for a $\delta\in(0,1-s)$, and let us consider the function
\begin{equation*}
h(x)=u_{s+\delta}(x)-u_s(x)\quad\mathrm{in} \ \mathbb{R}^n,
\end{equation*}
where $u_s$ is the homogeneous solution of \eqref{Pstheta} and $u_{s+\delta}$ is the unique, up to multiplicative constants, nonnegative nontrivial homogeneous and continuous in $\mathbb{R}^n$ solution for
\begin{equation*}
\begin{cases}
(-\Delta)^{s+\delta} u=0, & \mathrm{in}\quad C, \\
u=0, & \mathrm{in}\quad \mathbb{R}^n\setminus C,
\end{cases}
\end{equation*}
of the form
\begin{equation*}
u_{s+\delta}(x)=|x|^{\gamma_{s+\delta}}u_{s+\delta}\left(\frac{x}{|x|}\right).
\end{equation*}
The function $h$ is continuous in $\mathbb{R}^n$ and $h(x)=0$ in $\mathbb{R}^n\setminus C$. We want to prove that $h(x)\leq 0$ in $\mathbb{R}^n\setminus(C\cap B_1)$. Since $h=0$ outside the cone, we can consider only what happens in $C\setminus B_1$. As we already quoted, we have
\begin{equation}\label{c(s)}
c_1(s)|x|^{\gamma_s-s}\mbox{dist}(x,\partial C)^s\leq u_s(x)\leq c_2(s)|x|^{\gamma_s-s}\mbox{dist}(x,\partial C)^s,
\end{equation}
for any $x\in \overline{C}\setminus\{0\}$,
and there exist two constants $c_1(s+\delta),c_2(s+\delta)>0$ such that
\begin{equation*}
c_1(s+\delta)|x|^{\gamma_{s+\delta}-(s+\delta)}\mbox{dist}(x,\partial C)^{s+\delta}\leq u_{s+\delta}(x)\leq c_2(s+\delta)|x|^{\gamma_{s+\delta}-(s+\delta)}\mbox{dist}(x,\partial C)^{s+\delta}.
\end{equation*}
We can choose $u_s$ and $u_{s+\delta}$ so that $c:=c_1(s)=c_2(s+\delta)$ since they are defined up to a multiplicative constant. Then, for any $x\in C\setminus B_1$, since $|x|^{\gamma_{s+\delta}}\leq|x|^{\gamma_s}$, we have
\begin{equation}\label{h<0}
h(x)\leq c|x|^{\gamma_s}\mbox{dist}(x,\partial C)^s\left[\frac{\mbox{dist}(x,\partial C)^\delta}{|x|^\delta}-1\right]\leq 0.
\end{equation}
In fact, if we take $x$ such that $\mbox{dist}(x,\partial C)\leq 1$, then \eqref{h<0} follows by
\begin{equation*}
\frac{\mbox{dist}(x,\partial C)^\delta}{|x|^\delta}-1\leq \mbox{dist}(x,\partial C)^\delta-1\leq 0.
\end{equation*}
Instead, if we consider $x$ so that $\mbox{dist}(x,\partial C)> 1$, then $\mbox{dist}(x,\partial C)^\delta<|x|^\delta$ and hence \eqref{h<0} follows.\\\\
Now we want to show that there exists a point $x_0\in C\cap B_1$ such that $h(x_0)>0$. Let us take a point $\overline x\in S^{n-1}\cap C$ and let $\alpha:=u_{s+\delta}(\overline x)>0$ and $\beta:=u_s(\overline x)>0$. Hence, there exists a small $r>0$ so that $\alpha r^{\gamma_{s+\delta}}>\beta r^{\gamma_s}$, and so, taking $x_0$ with $|x_0|=r$ and so that $\frac{x_0}{|x_0|}=\overline x$, we obtain $h(x_0)>0$.\\\\
If we consider the restriction of $h$ to $\overline{C\cap B_1}$, which is continuous on a compact set, for the considerations done before and for the Weierstrass Theorem, there exists a maximum point $x_1\in C\cap B_1$ for the function $h$ which is global in $\mathbb{R}^n$ and is strict at least in a set of positive measure. Hence,
\begin{equation*}
(-\Delta)^{s+\delta}h(x_1)=C(n,s)\mbox{ P.V.}\int_{\mathbb{R}^n}\frac{h(x_1)-h(y)}{|x_1-y|^{n+2(s+\delta)}}\,\mathrm{d}y>0,
\end{equation*}
and since $(-\Delta)^{s+\delta}h$ is a continuous function in the open cone, there exists an open set $U(x_1)$ with $\overline{U(x_1)}\subset C$ such that
$$(-\Delta)^{s+\delta}h(x)>0\quad\forall x\in U(x_1).$$
But thanks to Corollary \ref{coroll1} we obtain a contradiction since for any nonnegative $\varphi\in C^\infty_c(U(x_1))$
\begin{equation*}\label{hsub}
((-\Delta)^{s+\delta}h,\varphi)=((-\Delta)^{s+\delta}u_{s+\delta},\varphi)-((-\Delta)^{s+\delta}u_s,\varphi)=-((-\Delta)^{s+\delta}u_s,\varphi)\leq 0.
\end{equation*}
\endproof
With the same argument of the previous proof we can show also the following useful upper bound.
\begin{Proposition}\label{prop2}
For any fixed regular cone $C$ with vertex in $0$ and any $s\in(0,1)$, $\gamma_s(C)\leq\gamma(C)$.
\end{Proposition}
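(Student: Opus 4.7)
The plan is to adapt the argument of Proposition \ref{prop1}, substituting the higher-order profile $u_{s+\delta}$ with the classical $\gamma(C)$-homogeneous harmonic function $u_1$ that solves \eqref{P1theta}, and the operator $(-\Delta)^{s+\delta}$ with $-\Delta$. Suppose by contradiction that $\gamma_s(C) > \gamma(C)$. After fixing the free multiplicative constants appropriately, I would consider
\[
h(x) := u_1(x) - u_s(x)\qquad \text{in }\mathbb{R}^n,
\]
which is continuous on $\mathbb{R}^n$ and vanishes identically outside $C$.

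First I would show that $h \le 0$ in $C \setminus B_1$. By the lower bound in \eqref{c(s)original}, $u_s(x) \ge c|x|^{\gamma_s - s}\mbox{dist}(x,\partial C)^s$, and by the classical Hopf / boundary-Harnack estimate on $C^{1,1}$ cones, $u_1(x) \le c|x|^{\gamma - 1}\mbox{dist}(x,\partial C)$, with matching constants after a proper normalization. Factoring yields
\[
h(x) \le c|x|^{\gamma_s - s}\mbox{dist}(x,\partial C)^s \Bigl[|x|^{\gamma - \gamma_s + s - 1}\mbox{dist}(x,\partial C)^{1-s} - 1\Bigr],
\]
and the bracket is non-positive on $C \setminus B_1$ by a short case split on whether $\mbox{dist}(x,\partial C) \le 1$ or not, using $\gamma < \gamma_s$, the trivial $\mbox{dist}(x,\partial C) \le |x|$, and $s < 1$. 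Meanwhile, picking any $\bar x \in S^{n-1}\cap C$, homogeneity gives
\[
h(r\bar x) = r^\gamma u_1(\bar x) - r^{\gamma_s}u_s(\bar x) > 0
\]
for all sufficiently small $r$, since $\gamma < \gamma_s$ and both $u_1(\bar x), u_s(\bar x)>0$. Hence $h$ attains its maximum on the compact set $\overline{C \cap B_1}$ at some interior point $x_1 \in C \cap B_1$, and this maximum is strict on a set of positive measure.

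Finally, by Corollary \ref{coroll1} the function $u_s$ is distributionally superharmonic in $C$, and since $u_1$ is classically harmonic in $C$, one gets $-\Delta h = \Delta u_s \le 0$ in $\mathcal{D}'(C)$. The strong maximum principle for distributional subharmonic functions then forces $h$ to be constant on the connected open set $C$, contradicting $h\equiv 0$ on $\partial C$ together with $h(x_1) > 0$. The delicate point is the boundary asymptotic step: producing matched Hopf-type bounds for $u_s$ and $u_1$ with compatible exponents and carrying out the case split on $\mbox{dist}(x,\partial C)$ is where the $C^{1,1}$ regularity of $C$ really enters, while the distributional sub/superharmonicity needed at the conclusion is supplied directly by Corollary \ref{coroll1}.
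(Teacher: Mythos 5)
Your proposal is correct and follows essentially the same route as the paper's proof: the same comparison function $h=u_1-u_s$, the same matched boundary estimates to force $h\le 0$ outside $C\cap B_1$, the same homogeneity argument producing an interior positive maximum, and the same reliance on Corollary \ref{coroll1} for the distributional superharmonicity of $u_s$. The only difference is the endgame, where you invoke the strong maximum principle for continuous distributionally subharmonic functions on the connected cone, whereas the paper evaluates $\Delta h$ pointwise at a maximum point via a second-derivative test and then tests against a nonnegative bump function supported near that point; your variant is, if anything, slightly more robust since it avoids selecting a maximum with a strictly negative pure second derivative.
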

\proof
Seeking a contradiction, we suppose that there exists $s\in(0,1)$ such that $\gamma_s>\gamma$. Hence we define the function
\begin{equation*}
h(x)=u(x)-u_s(x)\quad\mathrm{in} \ \mathbb{R}^n,
\end{equation*}
where $u_s$ and $u$ are respectively solutions to \eqref{Pstheta} and
\begin{equation}\label{P1theta}
\begin{cases}
-\Delta u=0, & \mathrm{in}\quad C, \\
u=0, & \mathrm{in}\quad \mathbb{R}^n\setminus C.
\end{cases}
\end{equation}
We recall that these solutions are unique, up to multiplicative constants, nonnegative nontrivial homogeneous and continuous in $\mathbb{R}^n$ of the form
\begin{equation*}
u(x)=|x|^{\gamma}u\left(\frac{x}{|x|}\right),\qquad u_s(x)=|x|^{\gamma_s}u_s\left(\frac{x}{|x|}\right).
\end{equation*}
for some $\gamma_s \in (0,2s)$ and $\gamma \in (0,+\infty)$.
The function $h$ is continuous in $\mathbb{R}^n$ and $h(x)=0$ in $\mathbb{R}^n\setminus C$. We want to prove that $h(x)\leq 0$ in $\mathbb{R}^n\setminus(C\cap B_1)$. Since $h=0$ outside the cone, we can consider only what happens in $C\setminus B_1$. So, there exist two constants $c_1(s),c_2(s)>0$ such that, for any $x\in \overline{C}\setminus\{0\}$, it holds \eqref{c(s)}. Moreover there exist two constants $c_1,c_2>0$ such that,
\begin{equation*}
c_1|x|^{\gamma-1}\mbox{dist}(x,\partial C)\leq u(x)\leq c_2|x|^{\gamma-1}\mbox{dist}(x,\partial C).
\end{equation*}
We can choose $u_s$ and $u$ so that $c:=c_1(s)=c_2$ since they are defined up to a multiplicative constant. Then, for any $x\in C\setminus B_1$, since $|x|^{\gamma}\leq|x|^{\gamma_s}$, we have
\begin{equation*}
h(x)\leq c|x|^{\gamma_s}\mbox{dist}(x,\partial C)^s\left[\frac{\mbox{dist}(x,\partial C)^{1-s}}{|x|^{1-s}}-1\right]\leq 0,
\end{equation*}
with the same arguments of the previous proof.\\\\
Now we want to show that there exists a point $x_0\in C\cap B_1$ such that $h(x_0)>0$. Let us take a point $\overline x\in S^{n-1}\cap C$ and let $\alpha:=u(\overline x)>0$ and $\beta:=u_s(\overline x)>0$. Hence, there exists a small $r>0$ so that $\alpha r^{\gamma}>\beta r^{\gamma_s}$, and so, taking $x_0$ with $|x_0|=r$ and so that $\frac{x_0}{|x_0|}=\overline x$, we obtain $h(x_0)>0$.\\\\
If we consider the restriction of $h$ to $\overline{C\cap B_1}$, which is continuous on a compact set, for the considerations done before and for the Weierstrass Theorem, there exists at least a maximum point in $C\cap B_1$ for the function $h$ which is global in $\mathbb{R}^n$. Moreover, since $h$ cannot be  constant on $C\cap B_1$ and it is of class $C^2$ inside the cone, there exists a global maximum $y\in C\cap B_1$ such that, up to a rotation, $\partial^2_{x_ix_i}h(y)\leq 0$ for any $i=1,...,n$ and $\partial^2_{x_jx_j}h(y)<0$ for at least a coordinate direction. Hence
\begin{equation*}
\Delta h(y)=\sum_{i=1}^n \partial^2_{x_ix_i}h(y)<0.
\end{equation*}
By the continuity of $\Delta h$ in the open cone, there exists an open set $U(y)$ with $\overline{U(y)}\subset C$ such that
$$\Delta h(x)<0\quad\forall x\in U(y).$$
Since, by Corollary \ref{coroll1} for any nonnegative $\varphi\in C^\infty_c(U(y))$
\begin{equation*}
(-\Delta u_s,\varphi)\geq 0,
\end{equation*}
hence
\begin{equation*}\label{hsuper}
(\Delta h,\varphi)=(\Delta u,\varphi)-(\Delta u_s,\varphi)=(-\Delta u_s,\varphi)\geq 0,
\end{equation*}
and this is a contradiction.
\endproof

\subsection{Asymptotic behavior of $\frac{C(n,s)}{2s-\gamma_s(C)}$}
Let us define for any regular cone $C$ the limit
\begin{equation*}
\mu(C)=\lim_{s\to 1^-}\frac{C(n,s)}{2s-\gamma_s(C)}\in[0,+\infty].
\end{equation*}
Obviously, thanks to the monotonicity of $s\mapsto\gamma_s(C)$ in $(0,1)$, this limit does exist, but we want to show that $\mu(C)$ can not be infinite. At this point, this situation can happen since $2s-\gamma_s(C)$ can converge to zero and we do not have enough information about this convergence. The study of this limit depends on the cone $C$ itself and so we will consider separately the case of wide cones and narrow cones, which are respectively when $\gamma(C)<2$ and when $\gamma(C)\geq2$. In this section, we prove this result just for regular cones, while in Section \ref{4} we will extend the existence of a finite limit $\mu(C)$ to any unbounded cone, without the monotonicity result of Proposition \ref{prop1}.

\subsubsection{Wide cones: $\gamma(C)<2$}
We remark that, fixed a wide cone $C\subset\mathbb{R}^n$, then there exists $\varepsilon>0$ and $s_0\in(0,1)$, both depending on $C$, such that for any $s\in[s_0,1)$
\begin{equation*}
2s-\gamma_s(C)\geq\varepsilon>0.
\end{equation*}
In fact we know that $s\mapsto\gamma_s(C)$ is monotone non decreasing in $(0,1)$ and $0<\gamma_s(C)\leq\gamma(C)<2$. Hence, defining $\overline{\gamma}(C)=\lim_{s\to 1}\gamma_s(C)\in(0,2)$ we can choose
\begin{equation*}\label{s_0}
s_0:=\frac{\overline{\gamma}(C)-2}{4}+1\in(1/2,1)\quad\mathrm{and}\quad\varepsilon:=\frac{2-\overline{\gamma}(C)}{2}>0,
\end{equation*}
obtaining
\begin{equation*}\label{varepsgamma}
2s-\gamma_s(C)\geq 2s_0-\overline{\gamma}(C)=\varepsilon>0.
\end{equation*}
As a consequence we obtain $\mu(C)=0$ for any wide cone.

\subsubsection{Narrow cones: $\gamma(C)\geq 2$}
Before addressing the asymptotic analysis for any regular cone, we focus our attention on the spherical caps ones with "small" aperture. Hence, let us fix $\theta_0\in(0,\pi/4)$ and for any $\theta\in(0,\theta_0]$, let
\begin{equation*}
\lambda_1(\theta):=\lambda_1(\omega_\theta)=\min_{\substack{
            u\in H^1_0(S^{n-1}\cap C_\theta)\\ u \neq 0}}\frac{\int_{S^{n-1}}\abs{\nabla_{S^{n-1}}u}^2\mathrm{d}\sigma}{\int_{S^{n-1}}u^2\mathrm{d}\sigma}.
\end{equation*}
We have that $\lambda_1(\theta)>2n$, and hence the following problem is well defined
\begin{equation}\label{mu0}
\mu_0(\theta):=\min_{\substack{
            u\in H^1_0(S^{n-1}\cap C_\theta)\\ u \neq 0}}\frac{\int_{S^{n-1}}\abs{\nabla_{S^{n-1}}u}^2-2nu^2\mathrm{d}\sigma}{\left(\int_{S^{n-1}}|u|\mathrm{d}\sigma\right)^2}.
\end{equation}
This number $\mu_0(\theta)$ is strictly positive and achieved by a nonnegative $\varphi\in H^1_0(S^{n-1}\cap C_\theta)\setminus\{0\}$ which is strictly positive on $S^{n-1}\cap C_\theta$ and is obviously solution to
\begin{equation}\label{spheric}
\begin{cases}
-\Delta_{S^{n-1}} \varphi=2n\varphi+\mu_0(\theta)\displaystyle\int_{S^{n-1}}\!\!\varphi\mathrm{d}\sigma & \mathrm{in}\quad S^{n-1}\cap C_\theta, \\
\varphi=0 & \mathrm{in}\quad S^{n-1}\setminus C_\theta,
\end{cases}
\end{equation}
where $-\Delta_{S^{n-1}}$ is the Laplace-Beltrami operator on the unitary sphere $S^{n-1}$.\\
Let now $v$ be the $0$-homogeneous extension of $\varphi$ to the whole of $\mathbb{R}^n$ and $r(x):=|x|$. Such a function will be solution to
\begin{equation}\label{0.ext}
\begin{cases}
-\Delta v=\frac{2nv}{r^2}+\frac{\mu_0(\theta)}{r^2}\displaystyle\int_{S^{n-1}}\!\!v \mathrm{d}\sigma & \mathrm{in}\quad C_\theta, \\
v=0 & \mathrm{in}\quad \mathbb{R}^n\setminus C_\theta.
\end{cases}
\end{equation}
Since the spherical cap $C_\theta\cap S^{n-1}$ is an analytic submanifold of $S^{n-1}$ and the data $(\partial C_\theta \cap S^{n-1},0,\partial_\nu \varphi)$ are not characteristic, by the classic theorem of Cauchy-Kovalevskaya we can extend the solution $\varphi$ of \eqref{spheric} to a function $\tilde{\varphi}$, which is defined in a enlarged cone and it satisfies
$$
\begin{cases}
-\Delta_{S^{n-1}} \tilde{\varphi}=2n\tilde{\varphi}+\mu_0(\theta)\displaystyle\int_{S^{n-1}}{\varphi\mathrm{d}\sigma}& \mathrm{in}\quad S^{n-1}\cap C_{\theta+\varepsilon}, \\
\tilde{\varphi}=\varphi & \mathrm{in}\quad S^{n-1}\cap C_\theta,
\end{cases}
$$
for some $\varepsilon>0$. As in \eqref{0.ext}, we can define $\tilde{v}$ as the $0$-homogenous extension of $\tilde{\varphi}$. Finally, we introduce the following function
\begin{equation*}
v_s(x):=r(x)^{\gamma_s^*(\theta)}v(x),
\end{equation*}
where the choice of the homogeneity exponent $\gamma_s^*(\theta)\in(0,2s)$ will be suggested by the following important result.
\begin{Theorem}\label{teobarriera}
Let $\theta \in (0,\theta_0]$, then there exists $s_0=s_0(\theta)\in(0,1)$ such that
\begin{equation*}
(-\Delta)^sv_s(x)\leq 0\qquad\mathrm{in} \ C_\theta,
\end{equation*}
for any $s\in[s_0,1)$.
\end{Theorem}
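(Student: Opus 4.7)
My plan is to identify two competing contributions in $(-\Delta)^s v_s$ and to choose $\gamma_s^*(\theta)$ so that the negative one strictly dominates. The positive term is a ``local'' contribution that behaves like the classical $-\Delta v_s$, while the negative one is a ``long-range'' contribution coming from the polynomial growth of $v_s$ at infinity. Crucially, since $v_s$ itself depends on $s$ through $\gamma_s^*(\theta)\to 2^-$, the naive limit $(-\Delta)^s v_s \to -\Delta v_s$ fails, and the long-range part leaves a nontrivial nonpositive residue that can be tuned against the positive local one. Because $v_s$ is $\gamma_s^*(\theta)$-homogeneous, $(-\Delta)^s v_s$ is $(\gamma_s^*(\theta)-2s)$-homogeneous, so I will reduce to checking the inequality at $x \in \omega_\theta := S^{n-1}\cap C_\theta$.

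\textbf{Local contribution.} First, a direct computation in polar coordinates, using the PDE \eqref{spheric} for $\varphi$, gives
\[
-\Delta v_s(x) = |x|^{\gamma_s^*(\theta)-2}\Big[(2-\gamma_s^*(\theta))(n+\gamma_s^*(\theta))\,\varphi(x/|x|) + \mu_0(\theta)\int_{S^{n-1}}\varphi\,d\sigma\Big],
\]
which is strictly positive in $C_\theta$. Fix $R\gg 1$ and a small ball $B_\rho(x)\subset C_\theta$. Taylor-expanding $v_s$ in the principal-value integral and invoking the standard asymptotic $C(n,s)\sim (1-s)\cdot 4\Gamma(n/2+1)/\pi^{n/2}$ as $s\to 1^-$, I will show that the restriction of the principal-value integral to $|y-x|<\rho$ converges to $-\Delta v_s(x)$; a supplementary argument (using the smooth extension $\tilde v$) handles the intermediate region $\rho<|y-x|<R$. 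Since $\gamma_s^*(\theta)\to 2^-$, the first summand of $-\Delta v_s$ vanishes and the local contribution tends to the positive constant $\mu_0(\theta)\int\varphi\,d\sigma$.

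\textbf{Long-range contribution and choice of $\gamma_s^*(\theta)$.} For the tail $|y-x|\geq R$, writing $y=x+r\omega$ and expanding $|y|^{\gamma_s^*(\theta)}=r^{\gamma_s^*(\theta)}(1+O(1/r))$ and $|x-y|^{n+2s}=r^{n+2s}$ gives
\[
C(n,s)\int_{|y-x|>R}\!\frac{v_s(x)-v_s(y)}{|x-y|^{n+2s}}\,dy = -\frac{C(n,s)}{2s-\gamma_s^*(\theta)}\,R^{\gamma_s^*(\theta)-2s}\int_{\omega_\theta}\varphi\,d\sigma + O(1/R) + o_{s\to 1}(1),
\]
a strictly negative quantity of order $C(n,s)/(2s-\gamma_s^*(\theta))$. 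I will then \emph{define} $\gamma_s^*(\theta)\in(0,2s)$ implicitly by
\[
\frac{C(n,s)}{2s-\gamma_s^*(\theta)} = \mu_0(\theta)+\eta
\]
for a fixed small $\eta>0$; this is solvable for $s$ near $1$ because $C(n,s)\to 0$, and it forces $\gamma_s^*(\theta)\to 2^-$. Summing the two contributions and choosing $R$ large enough that $O(1/R)$ is negligible yields
\[
(-\Delta)^s v_s(x) \longrightarrow \mu_0(\theta)\int\varphi\,d\sigma - (\mu_0(\theta)+\eta)\int\varphi\,d\sigma = -\eta\int\varphi\,d\sigma < 0
\]
uniformly in $x\in\omega_\theta$, and the inequality extends to all of $C_\theta$ by homogeneity, for every $s$ in a left-neighborhood $[s_0(\theta),1)$ of $1$.

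\textbf{Main obstacle.} The hardest part will be to make the above asymptotics uniform in $x\in\overline{\omega_\theta}$: for $x$ approaching $\partial\omega_\theta$, the ball $B_\rho(x)$ cannot remain inside $C_\theta$ and the direct local expansion breaks down. This is precisely where the smooth extension $\tilde v$ across $\partial C_\theta$ provided by Cauchy--Kovalevskaya is essential. Writing $v_s = \tilde v_s - (\tilde v_s - v_s)$ on $C_{\theta+\varepsilon}$, the difference is supported in the collar $C_{\theta+\varepsilon}\setminus C_\theta$, where $\tilde v<0$ by the Hopf boundary behavior of $\varphi$; this produces an additional favorable nonpositive term and restores uniformity near $\partial\omega_\theta$. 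Precise control of the corrections from expanding $|x-y|^{n+2s}$ and $\varphi(y/|y|)$ at infinity, combined with the sharp asymptotic $C(n,s)\sim(1-s)$, is exactly what forces and justifies the implicit definition of $\gamma_s^*(\theta)$ above.
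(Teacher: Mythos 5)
Your proposal is correct and follows essentially the same route as the paper's proof: reduction to $S^{n-1}\cap C_\theta$ by homogeneity, extraction of the positive local contribution $\mu_0(\theta)\int_{S^{n-1}}\varphi\,\mathrm{d}\sigma$ (via the equation for $\varphi$, the asymptotics $\tfrac{C(n,s)\omega_{n-1}}{4n(1-s)}\to1$, and the Cauchy--Kovalevskaya extension $\tilde v$ to get uniformity up to $\partial\omega_\theta$) balanced against the negative far-field contribution of size $\tfrac{C(n,s)}{2s-\gamma_s^*(\theta)}\int_{S^{n-1}}\varphi\,\mathrm{d}\sigma$, followed by tuning $\gamma_s^*(\theta)$ close to $2s$ so the latter dominates. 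The only differences are organizational: you split the kernel integral into near/intermediate/far regions where the paper expands the product $(-\Delta)^s(r^{\gamma}v)$ into three Leibniz-type terms, and you fix $\gamma_s^*(\theta)$ implicitly by $\tfrac{C(n,s)}{2s-\gamma_s^*(\theta)}=\mu_0(\theta)+\eta$, which gives a fixed negative margin $-\eta\int_{S^{n-1}}\varphi\,\mathrm{d}\sigma$ and is at least as robust as the paper's prescription, while still yielding the bound needed in Corollary \ref{corollmu} after letting $\eta\to0$.
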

\begin{proof}
By the $\gamma_s^*(\theta)$-homogeneity of $v_s$, it is sufficient to prove that $(-\Delta)^s v_s \leq 0$ on $C_\theta \cap S^{n-1}$, since $x\mapsto (-\Delta)^s v_s$ is $(\gamma_s^*(\theta)-2s)$-homogenous. In order to ease the notations, through the following computations we will simply use $\gamma$ instead of $\gamma_s^*(\theta)$ and $o(1)$ for the terms which converge to zero as $s$ goes to $1$. Hence, for $x\in S^{n-1}\cap C_\theta$, we have
$$
(-\Delta)^s v_s (x) = \abs{x}^{\gamma}(-\Delta)^s v(x) + v(x)(-\Delta)^s r^{\gamma}(x) - C(n,s)\int_{\R^n}{\frac{(r^\gamma (x)-r^\gamma(y))(v(x)-v(y))}{\abs{x-y}^{n+2s}}\mathrm{d}y}.
$$
First for $R>0$,
\begin{align*}
(-\Delta)^s r^\gamma(x)= &\, C(n,s)\int_{B_R(x)}{\frac{\abs{x}^{\gamma}-\abs{y}^\gamma}{\abs{x-y}^{n+2s}}\mathrm{d}y}+C(n,s)\int_{\R^n\setminus B_{R}(x)}{\frac{\abs{x}^\gamma-\abs{y}^\gamma}{\abs{x-y}^{n+2s}}\mathrm{d}y}\\
=& \, \frac{C(n,s)}{2}\int_{B_R(0)}{\frac{2\abs{x}^\gamma-\abs{x+z}^\gamma-\abs{x-z}^\gamma}{\abs{z}^{n+2s}}\mathrm{d}z}+C(n,s)\int_{\R^n\setminus B_{R}(x)}{\frac{1-\abs{y}^\gamma}{\abs{x-y}^{n+2s}}\mathrm{d}y}\\
=&\, -\frac{C(n,s)}{2}\int_0^R{\frac{\rho^2\rho^{n-1}}{\rho^{n+2s}}\mathrm{d}\rho}\int_{S^{n-1}}{\langle \nabla^2 \abs{x}^\gamma z,z\rangle \mathrm{d}\sigma}+o(1)+\\
&\,+C(n,s)\abs{S^{n-1}}\int_R^\infty{\frac{1}{\rho^{1+2s}}\mathrm{d}\rho}-C(n,s)\int_{\R^n\setminus B_{R}(x)}{\frac{\abs{y}^\gamma}{\abs{x-y}^{n+2s}}\mathrm{d}y}\\
=&-\frac{C(n,s)}{2}\frac{R^{2-2s}}{2-2s}\int_{S^{n-1}}{\langle \nabla^2 \abs{x}^\gamma z,z\rangle \mathrm{d}\sigma}+\\
&\,-C(n,s)\int_R^\infty{\frac{\rho^{n-1+\gamma}}{\rho^{n+2s}} \int_{S^{n-1}}{\abs{\frac{x}{\rho}-\vartheta}^\gamma \mathrm{d\sigma}(\vartheta)}\mathrm{d}\rho} + o(1).
\end{align*}
Since for every symmetric matrix $A$ we have
$$
\int_{S^{n-1}}{\langle A z, z\rangle \mathrm{d}\sigma} = \frac{\mbox{tr}A}{n}\omega_{n-1}
$$
where $\omega_{n-1}$ is the Lebesgue measure of the $(n-1)$-sphere $S^{n-1}$, we can simplify the first term since $\mbox{tr}\nabla^2\abs{x}^\gamma = \Delta(\abs{x}^\gamma)$ and checking that $\abs{\frac{x}{\rho}-\vartheta}^\gamma = 1 + \gamma\rho^{-1}\langle \vartheta,x\rangle + o(\rho^{-1})$ as $\rho \to \infty$ it follows
\begin{align*}
(-\Delta)^s r^\gamma(x)
=&-\frac{C(n,s)}{2}\frac{R^{2-2s}}{2-2s}\frac{\Delta ( \abs{x}^\gamma)\omega_{n-1}}{n}-C(n,s)\omega_{n-1}\int_R^\infty{\frac{\rho^{n-1+\gamma}}{\rho^{n+2s}} \mathrm{d}\rho} + o(1)\\
=& -\frac{C(n,s)\omega_{n-1}}{4n(1-s)}\gamma(n-2+\gamma)\abs{x}^{\gamma-2}R^{2-2s} - \frac{C(n,s)}{2s-\gamma}\omega_{n-1}R^{\gamma-2s} + o(1)\\
=&-\frac{C(n,s)\omega_{n-1}}{4n(1-s)}\gamma(n-2+\gamma)R^{2-2s} - \frac{C(n,s)}{2s-\gamma}\omega_{n-1}R^{\gamma-2s} + o(1)\\
=&-\frac{C(n,s)\omega_{n-1}}{4n(1-s)}\gamma(n-2+\gamma) - \frac{C(n,s)}{2s-\gamma}\omega_{n-1} + o(1),
\end{align*}
where in the last equality we choose $\gamma=\gamma_s^*(\theta)$ such that $\gamma_s^*(\theta)-2s \to 0$ as $s$ goes to 1.\\
Similarly, if $\tilde{v}$ is the $0$-homogenous extension of $v$ in an enlarged cone, which is such that $v\geq \tilde{v}$ and $v=\tilde{v}$ on $C_\theta\cap S^{n-1}$, it follows
\begin{align*}
(-\Delta)^s v(x) =& \frac{C(n,s)}{2} \int_{\abs{z}<1}{\frac{2v(x)-v(x+z)-v(x-z)}{\abs{z}^{n+2s}}\mathrm{d}z}
+ C(n,s)\int_{\abs{x-y}>1}{\frac{v(x)-v(y)}{\abs{x-y}^{n+2s}}\mathrm{d}y}\\
\leq & \frac{C(n,s)}{2} \int_{\abs{z}<1}{\frac{2\tilde{v}(x)-\tilde{v}(x+z)-\tilde{v}(x-z)}{\abs{z}^{n+2s}}\mathrm{d}z}
+ C(n,s) \int_1^\infty{\frac{\rho^{n-1}}{\rho^{n+2s}}\int_{S^{n-1}}{v(x)-v(y)\mathrm{d}\sigma}\mathrm{d}\rho}\\
=& -\frac{C(n,s)}{2} \int_0^1{\frac{\rho^{n-1}\rho^2}{\rho^{n+2s}}\int_{S^{n-1}}{\langle \nabla^2 \tilde{v}(x) z,z\rangle \mathrm{d}\sigma}\mathrm{d}\rho}
+ o(1)\\
=& \frac{C(n,s)\omega_{n-1}}{4n(1-s)}(-\Delta)\tilde{v}(x)+ o(1),
\end{align*}
where we can use that $\tilde{v}$ solves
$$
-\Delta \tilde{v} = 2n\tilde{v} + \mu_0\int_{S^{n-1}}{v\mathrm{d}\sigma}
$$
in the enlarged cap $S^{n-1}\cap C_{\theta+\varepsilon}$. Finally,
\begin{align*}
C(n,s)\int_{\R^n}{\frac{(\abs{x}^\gamma-\abs{y}^\gamma)(v(x)-v(y))}{\abs{x-y}^{n+2s}}\mathrm{d}y}=&
C(n,s)\left[\int_{\abs{y}<1}{\frac{(1-\abs{y}^\gamma)(v(x)-v(y))}{\abs{x-y}^{n+2s}}\mathrm{d}y}+\right.\\
&\left.+\int_{\abs{y}>1}{\frac{(1-\abs{y}^\gamma)(v(x)-v(y))}{\abs{x-y}^{n+2s}}\mathrm{d}y}\right]
\end{align*}
where the first term is $o(1)$ since
\begin{align*}
\int_0^1{(1-\rho^\gamma)\rho^{n-1}\int_{S^{n-1}}{\frac{v(x)-v(y)}{\abs{x-\rho y}^{n+2s}} \mathrm{d}\sigma}\mathrm{d}\rho} =& \int_0^1{(1-\rho^\gamma)\rho^{n-1}\int_{S^{n-1}}{(v(x)-v(y))(1+o(\rho))\mathrm{d}\sigma}\mathrm{d}\rho}\\
&+ \int_0^R{(1-\rho^\gamma)\rho^{n-1}\int_{S^{n-1}}{(v(x)-v(y))(n+2s)\rho \langle x, y \rangle\mathrm{d}\sigma}\mathrm{d}\rho}.
\end{align*}
Hence, we obtain
\begin{align*}
C(n,s)\int_{\R^n}{\frac{(\abs{x}^\gamma-\abs{y}^\gamma)(v(x)-v(y))}{\abs{x-y}^{n+2s}}\mathrm{d}y}=&
C(n,s)\int_{\abs{y}>1}{\frac{(1-\abs{y}^\gamma)(v(x)-v(y))}{\abs{x-y}^{n+2s}}\mathrm{d}y}+o(1)\\
=&o(1)-C(n,s)\int_{\abs{y}>1}{\frac{\abs{y}^\gamma(v(x)-v(y))}{\abs{x-y}^{n+2s}}\mathrm{d}y}+o(1)\\
=&o(1)-C(n,s)\int_1^\infty{\rho^\gamma \rho^{n-1}\int_{S^{n-1}}{\frac{v(x)-v(y)}{\abs{x-\rho y}^{n+2s}}\mathrm{d}\sigma}\mathrm{d}\rho}\\
=&o(1) - C(n,s)\int_1^\infty{\rho^{-1+\gamma-2s}\int_{S^{n-1}}{(v(x)-v(y))(1+o(\rho^{-1}))\mathrm{d}\sigma}\mathrm{d}\rho}+\\
&-C(n,s)\int_1^\infty{\rho^{-1+\gamma -2s} \int_{S^{n-1}}{(v(x)-v(y))(n+2s)\langle y, x\rangle \rho^{-1}\mathrm{d}\sigma}\mathrm{d}\rho}\\
=& o(1) - \frac{C(n,s)\omega_{n-1}}{2s-\gamma} v(x) + \frac{C(n,s)}{2s-\gamma}\int_{S^{n-1}}{v(y)\mathrm{d}\sigma}.
\end{align*}
Hence, recalling that $\gamma=\gamma^*_s(\theta)$, for $x\in S^{n-1}\cap C_\theta$ we have
\begin{align*}
(-\Delta)^s v_s(x)
&\leq\left(\mu_0(\theta)\frac{C(n,s)\omega_{n-1}}{4n(1-s)} - \frac{C(n,s)}{2s-\gamma_s^*(\theta)}\right)\int_{S^{n-1}}{v_s\mathrm{d}\sigma}
 + \frac{C(n,s)\omega_{n-1}}{4n(1-s)}(n+\gamma_s^*(\theta))(2-\gamma^*_s(\theta))v_s\\
&\leq   \left(\mu_0(\theta) - \frac{C(n,s)}{2s-\gamma^*_s(\theta)}\right) \int_{S^{n-1}}{v_s \mathrm{d}\sigma} + o(1)
\end{align*}
where $o(1)$ is uniform with respect to $\gamma^*_s(\theta)$ as $s\to 1$. In order to obtain a negative right hand side, it is sufficient to choose $\gamma^*_s(\theta) < 2s$ in such a way to make the denominator $2s-\gamma^*_s(\theta)$ small enough and the quotient $\frac{C(n,s)}{2s-\gamma^*_s(\theta)}$ still bounded.
\end{proof}
The previous result suggestes the following choice of the homogeneity exponent
$$\gamma_s^*(\theta):=2s-s\frac{C(n,s)}{\mu_0(\theta)}.$$
We can finally prove the main result of this section.
\begin{Corollary}\label{corollmu}
For any regular cone $C$, $\mu(C)<+\infty$.
\end{Corollary}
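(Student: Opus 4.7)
The plan is to reduce the statement to the case of narrow right circular cones, for which the barrier from Theorem \ref{teobarriera} supplies the bound, and then to propagate it to general regular cones by monotonicity.

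First I would prove that, for the spherical caps $C_\theta$ with $\theta\in(0,\theta_0]$, one has $\gamma_s(C_\theta)\leq \gamma_s^*(\theta)=2s-sC(n,s)/\mu_0(\theta)$ whenever $s$ is close enough to $1$. The strategy is a comparison between the canonical $s$-harmonic solution $u_s$ of \eqref{Pstheta} on $C_\theta$ and the subsolution $v_s(x)=|x|^{\gamma_s^*(\theta)}v(x)$ furnished by Theorem \ref{teobarriera}, whose $s$-Laplacian is nonpositive in $C_\theta$ and which vanishes identically on $\mathbb{R}^n\setminus\overline{C_\theta}$. Arguing by contradiction and assuming $\gamma_s(C_\theta)>\gamma_s^*(\theta)$, I would rescale $u_s$ by a large enough multiplicative constant $\lambda>0$ so that the sharp bound \eqref{c(s)original} (with $\mathrm{dist}(\cdot,\partial C_\theta)^s$ boundary behavior) dominates the linear boundary decay of $v_s$ (inherited from the smoothness of the eigenfunction $\varphi$), forcing $h:=v_s-\lambda u_s\leq 0$ on $\mathbb{R}^n\setminus(C_\theta\cap B_1)$. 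Along the axis of $C_\theta$, the homogeneity gap makes $v_s(re)/u_s(re)\sim r^{\gamma_s^*(\theta)-\gamma_s(C_\theta)}$ unbounded as $r\to 0^+$, so $h$ is strictly positive at some interior point of $C_\theta\cap B_1$ and hence admits a strict positive global maximum at some $x_1\in C_\theta\cap B_1$. This yields $(-\Delta)^sh(x_1)>0$ from the sign of the principal value integral, contradicting $(-\Delta)^sh(x_1)=(-\Delta)^sv_s(x_1)-\lambda(-\Delta)^su_s(x_1)\leq 0$ granted by Theorem \ref{teobarriera}. Rearranging yields $C(n,s)/(2s-\gamma_s(C_\theta))\leq\mu_0(\theta)/s$, whence $\mu(C_\theta)\leq \mu_0(\theta)<+\infty$.

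For a general regular cone $C$, since $C$ is open and nonempty, one can insert a rotated right circular cone $R(C_\theta)\subseteq C$ with $\theta\in(0,\theta_0]$ and $R\in SO(n)$. By rotation invariance $\gamma_s(R(C_\theta))=\gamma_s(C_\theta)$, and by the domain monotonicity of the Dirichlet eigenvalue $\omega\mapsto\lambda_1^s(\omega)$ together with the monotonicity of $t\mapsto\gamma_s(t)$, the inclusion gives $\gamma_s(C)\leq \gamma_s(C_\theta)$. Consequently $C(n,s)/(2s-\gamma_s(C))\leq C(n,s)/(2s-\gamma_s(C_\theta))\leq \mu_0(\theta)/s$, and passing to the limit $s\to 1^-$ we conclude $\mu(C)\leq \mu_0(\theta)<+\infty$.

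The main obstacle is the comparison in the first step: the two functions $u_s$ and $v_s$ have intrinsically different boundary behaviors ($\mathrm{dist}^s$ versus $\mathrm{dist}$), so the scaling $\lambda$ must be tuned, in the spirit of the estimate \eqref{h<0}, to dominate $v_s$ on $C_\theta\setminus B_1$ uniformly both far from and close to the lateral boundary, while still leaving room for positivity of $h$ inside $B_1$. Once this sign control is in place, the maximum principle argument mirrors the ones used in Propositions \ref{prop1} and \ref{prop2}.
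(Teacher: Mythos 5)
Your plan is correct and is essentially the paper's own argument: a comparison of $u_s$ with the barrier $v_s$ of Theorem \ref{teobarriera} (your rescaling of $u_s$ by $\lambda$ is just the paper's normalization of $v_s$ up to a multiplicative constant), sign control on $\mathbb{R}^n\setminus(C_\theta\cap B_1)$ via the boundary estimates, an interior positive maximum contradicting $(-\Delta)^s v_s\le 0$, hence $\gamma_s(\theta)\le\gamma_s^*(\theta)$ and $\mu(\theta)\le\mu_0(\theta)$, and finally the inclusion $C_\theta\subset C$ with monotonicity of the characteristic exponent. No genuine differences from the paper's proof.
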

\proof
We will show that $\mu(\theta)<+\infty$ for any $\theta\in(0,\theta_0]$. Then, fixed an unbounded regular cone $C$, there exists a spherical cone $C_\theta$ such that $\theta\in(0,\theta_0]$ and $C_\theta\subset C$. Since by inclusion $\gamma_s(C)<\gamma_s(\theta)$, we obtain
\begin{equation*}
\mu(C)\leq\mu(\theta)<+\infty.
\end{equation*}
We want to show that fixed $\theta\in(0,\theta_0]$, $\gamma_s(\theta)\leq\gamma_s^*(\theta)$ for any $s\in[s_0(\theta),1)$, where the choice of $s_0(\theta)\in(0,1)$ is given in Theorem \ref{teobarriera}. The proof of this fact is based on considerations done in Proposition \ref{prop1}. By contradiction, $\gamma_s(\theta)>\gamma_s^*(\theta)$. Let
$$h(x)=v_s(x)-u_s(x).$$
The function $h$ is continuous in $\mathbb{R}^n$ and $h(x)=0$ in $\mathbb{R}^n\setminus C_\theta$. We want to prove that $h(x)\leq 0$ in $\mathbb{R}^n\setminus(C_\theta\cap B_1)$. Since $h=0$ outside the cone, we can consider only what happens in $C_\theta\setminus B_1$. By \eqref{c(s)}, there exist two constants $c_1(s),c_2(s)>0$ such that, for any $x\in \overline{C_\theta}\setminus\{0\}$,
\begin{equation*}
c_1(s)|x|^{\gamma_s-s}\mbox{dist}(x,\partial C_\theta)^s\leq u_s(x)\leq c_2(s)|x|^{\gamma_s-s}\mbox{dist}(x,\partial C_\theta)^s,
\end{equation*}
and there exist two constants $c_1,c_2>0$ such that
\begin{equation*}
c_1|x|^{\gamma_s^*-1}\mbox{dist}(x,\partial C_\theta)\leq v_{s}(x)\leq c_2|x|^{\gamma_s^*-1}\mbox{dist}(x,\partial C_\theta).
\end{equation*}
We can choose $v_s$ so that $c:=c_1(s)=c_2$ since it is defined up to a multiplicative constant. Then, for any $x\in C_\theta\setminus B_1$, since $|x|^{\gamma_s^*}\leq|x|^{\gamma_s}$, we have
\begin{equation*}
h(x)\leq c|x|^{\gamma_s}\mbox{dist}(x,\partial C_\theta)^s\left[\frac{\mbox{dist}(x,\partial C_\theta)^{1-s}}{|x|^{1-s}}-1\right]\leq 0.
\end{equation*}
Now we want to show that there exists a point $x_0\in C_\theta\cap B_1$ such that $h(x_0)>0$. Let us consider for example the point $\overline x\in S^{n-1}\cap C_\theta$ determined by the angle $\vartheta=\theta/2$, and let $\alpha:=v_s(\overline x)>0$ and $\beta:=u_s(\overline x)>0$. Hence, there exists a small $r>0$ so that $\alpha r^{\gamma_s^*}>\beta r^{\gamma_s}$, and so, taking $x_0$ with angle $\vartheta=\theta/2$ and $|x_0|=r$, we obtain $h(x_0)>0$.\\\\
If we consider the restriction of $h$ to $\overline{C_\theta\cap B_1}$, which is continuous on a compact set, for the considerations done before and for the Weierstrass Theorem, there exists a maximum point $x_1\in C_\theta\cap B_1$ for the function $h$ which is global in $\mathbb{R}^n$ and is strict at least in a set of positive measure. Hence,
\begin{equation*}
(-\Delta)^sh(x_1)=C(n,s)\mbox{ P.V.}\int_{\mathbb{R}^n}\frac{h(x_1)-h(y)}{|x_1-y|^{n+2s}}\,\mathrm{d}y>0,
\end{equation*}
and since $(-\Delta)^sh$ is a continuous function in the open cone, there exists an open set $U(x_1)$ with $\overline{U(x_1)}\subset C_\theta$ such that
$$(-\Delta)^sh(x)>0\quad\forall x\in U(x_1).$$
But thanks to Theorem \ref{teobarriera} we obtain a contradiction since for any nonnegative $\varphi\in C^\infty_c(U(x_1))$
\begin{equation*}
((-\Delta)^sh,\varphi)=((-\Delta)^sv_s,\varphi)-((-\Delta)^su_s,\varphi)=((-\Delta)^sv_s,\varphi)\leq 0,
\end{equation*}
where the last inequality holds for any $s\in[s_0(\theta),1)$. Hence, for any $\theta\in(0,\theta_0]$
\begin{equation}\label{upperbound.mu}
\mu(\theta)=\lim_{s\to 1^-}\frac{C(n,s)}{2s-\gamma_s(\theta)}\leq\lim_{s\to 1^-}\frac{C(n,s)}{2s-\gamma_s^*(\theta)}=\mu_0(\theta)<+\infty.
\end{equation}
\endproof

\section{The limit for $s\nearrow 1$}\label{4}
In this section we prove the main result, Theorem \ref{teolimit1}, emphasizing the difference between wide and narrow cones. Then we improve the asymptotic analysis proving uniqueness of the limit under assumptions on the geometry and the regularity of $C$.\\\\
Let $C\subset \R^n$ be an open cone and consider the minimization problem
\begin{equation}\label{lambda1}
\lambda_1(C)=\inf\left\{\ddfrac{\int_{S^{n-1}}|\nabla_{S^{{n-1}}}u|^2\mathrm{d}\sigma}{\int_{S^{n-1}}u^2\mathrm{d}\sigma} \ : \ u\in H^1(S^{n-1})\setminus\{0\} \ \mathrm{and} \ u=0 \ \mathrm{in} \ S^{n-1}\setminus C\right\},
\end{equation}
which is strictly related to the homogeneity of the solution of \eqref{P1theta} by $\lambda_1(C)=\gamma(C) ( \gamma(C) + n -2)$.\\
Moreover, if $\gamma(C) >  2$, equivalently if $\lambda_1(C)> 2n$, the problem
\begin{equation}\label{mu01}
\mu_0(C):=\inf\left\{\ddfrac{\int_{S^{n-1}}\abs{\nabla_{S^{n-1}}u}^2-2nu^2\mathrm{d}\sigma}{\left(\int_{S^{n-1}}|u|\mathrm{d}\sigma\right)^2} \ : \ u\in H^1(S^{n-1})\setminus\{0\} \ \mathrm{and} \ u=0 \ \mathrm{in} \ S^{n-1}\setminus C\right\}
\end{equation}
is well defined and the number $\mu_0(C)$ is strictly positive. \\\\By a standard argument due to the variational characterization of the previous quantities, we already know the existence of a nonnegative eigenfunction $\varphi \in H^1_0(S^{n-1}\cap C)\setminus\{0\}$ associated to the minimization problem \eqref{lambda1} and a nonnegative function $\psi \in H^1_0(S^{n-1}\cap C)\setminus\{0\}$ that achieves the minimum \eqref{mu01}, since the numerator in \eqref{mu01} is a coercive quadratic form equivalent to the one in \eqref{lambda1}.\\\\
Since the cone $C$ may be disconnected, it is well known that $\varphi$ is not necessarily unique. Instead, the function $\psi$ is unique up to a multiplicative constant, since it solves
\begin{equation}\label{spheric1}
\begin{cases}
-\Delta_{S^{n-1}} \psi=2n\psi+\mu_0(C)\displaystyle\int_{S^{n-1}}\!\!\psi\mathrm{d}\sigma & \mathrm{in}\quad S^{n-1}\cap C, \\
\psi=0 & \mathrm{in}\quad S^{n-1}\setminus C.
\end{cases}
\end{equation}
In fact, due to the integral term in the equation, the solution $\psi$ must be strictly positive in every connected component of $C$ and localizing the equation in a generic component we can easily get uniqueness by maximum principle.\\\\
A fundamental toll in order to reach as $s\to 1$ the space $H^1_{\tiny{\mbox{loc}}}$, is the following result
\begin{Proposition}\cite[Corollary 7]{bourgain:hal-00747692}\label{Brezis}
Let $\Omega\subset \R^n$ be a bounded domain. For $1 < p < \infty$, let $f_s\in W^{s,p}(\Omega)$, and assume that
$$
[f_s]_{W^{s,p}(\Omega)}\leq C_0.
$$
Then, up to a subsequence, $(f_s)$ converges in $L^p(\Omega)$ as $s\to 1$(and, in fact, in $W^{t,p}(\Omega)$, for all $t<1$) to some $f\in W^{1,p}(\Omega)$.
\end{Proposition}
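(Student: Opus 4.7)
The statement is an $L^p$-compactness result coupled with identification of the limit as a $W^{1,p}$-function. Under the hypothesis that the fractional seminorm $[f_s]_{W^{s,p}(\Omega)}$ (taken with the Bourgain--Brezis--Mironescu normalization, which carries a factor $(1-s)^{1/p}$ so that nontrivial $W^{1,p}$-functions have finite limit seminorm) is uniformly bounded by a constant $C_0$, I would split the proof into two main parts: first, precompactness in $L^p(\Omega)$; second, identification of the limit and the regularity upgrade to $W^{1,p}$.

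For precompactness, the key observation is that on a bounded domain and for $0<t<s<1$, one has the elementary monotonicity
\begin{equation*}
[f]_{W^{t,p}(\Omega)} \leq \mathrm{diam}(\Omega)^{s-t}\,[f]_{W^{s,p}(\Omega)}.
\end{equation*}
Fixing any $t\in(0,1)$, this puts $(f_s)_{s\in(t,1)}$ in a bounded subset of $W^{t,p}(\Omega)$. The Rellich--Kondrachov theorem for fractional Sobolev spaces on bounded Lipschitz domains then yields a subsequence converging to some $f$ in $L^p(\Omega)$, with $f\in W^{t,p}(\Omega)$ for every $t<1$.

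To upgrade $f$ to $W^{1,p}(\Omega)$, one invokes the BBM lower semicontinuity: if $g_k\to g$ in $L^p$ along $s_k\nearrow 1$ and the normalized seminorms $[g_k]_{W^{s_k,p}}$ remain bounded, then $g\in W^{1,p}$ with a quantitative bound on $\|\nabla g\|_{L^p}$. This is proved by testing against $\varphi\in C^\infty_c(\Omega)$, representing partial derivatives via difference quotients, and passing to the limit with Fatou's lemma and the BBM identity. Finally, the convergence in $W^{t,p}(\Omega)$ for every $t<1$ follows from the interpolation $[f_{s_k}-f]_{W^{t,p}}\leq C\|f_{s_k}-f\|_{L^p}^{1-\theta}[f_{s_k}-f]_{W^{t',p}}^\theta$ with $t<t'<1$ and $\theta=t/t'$, using the uniform $W^{t',p}$-bound from the first step.

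The main obstacle is the second step, the regularity jump from fractional to first-order Sobolev regularity in the limit $s=1$. It is captured by the BBM asymptotic identity
\begin{equation*}
\lim_{s\to 1}(1-s)\iint\frac{|f(x)-f(y)|^p}{|x-y|^{n+sp}}\,\mathrm{d}x\,\mathrm{d}y = K_{n,p}\|\nabla f\|_{L^p}^p,
\end{equation*}
which is nontrivial and requires the correct $(1-s)^{1/p}$-normalization to be built into the seminorm in order to be informative in the limit. Without this normalization the standing hypothesis would be vacuous for any nontrivial $W^{1,p}$-limit. The first and third steps are comparatively routine once the normalization conventions are fixed in accordance with the BBM asymptotics.
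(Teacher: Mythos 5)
The paper offers no proof of this Proposition: it is quoted (with the normalization remark that follows it) from \cite{bourgain:hal-00747692}, so your attempt can only be measured against the original BBM argument. Your overall architecture --- precompactness in $L^p$, identification of the limit in $W^{1,p}$ via the BBM liminf/asymptotic formula, then interpolation to upgrade to $W^{t,p}$ convergence for $t<1$ --- is the standard route, and your Steps 2 and 3 are fine in outline. The genuine gap is in Step 1, which is exactly where the content of the theorem sits. The ``elementary monotonicity'' $[f]_{W^{t,p}(\Omega)}\le \mathrm{diam}(\Omega)^{s-t}[f]_{W^{s,p}(\Omega)}$ follows from the pointwise kernel comparison only for the \emph{unnormalized} Gagliardo seminorms. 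With the $(1-s)^{1/p}$-normalization built in --- which, as you yourself insist, is the only reading under which the hypothesis is not vacuous, and which the paper adopts by absorbing $C(n,s)$ into $[\cdot]_{H^s}$ --- the kernel comparison would require $1-t\le (1-s)\,\mathrm{diam}(\Omega)^{(s-t)p}$, false as $s\to1$ with $t$ fixed. Under the actual hypothesis the unnormalized $s$-seminorm may be as large as $C_0(1-s)^{-1/p}$, so your inequality yields no uniform bound in $W^{t,p}$ for fixed $t$. (If instead you read the hypothesis with unnormalized seminorms, then by the BBM formula the limit would necessarily be constant, emptying the conclusion.)

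A uniform fixed-$t$ bound under the normalized hypothesis is in fact true, but it is itself a nontrivial result (the ``limiting embedding'' estimates of Bourgain--Brezis--Mironescu, with constants independent of $s$), not an elementary kernel comparison. Alternatively --- and this is how the cited compactness result is actually proved --- one bypasses fixed-$t$ bounds altogether and gets $L^p$-precompactness from the Riesz--Fr\'echet--Kolmogorov criterion, using a translation/mollification estimate of the type $\norm{f_s(\cdot+h)-f_s}{L^p(\omega)}^p\le C\,\abs{h}^{sp}\,(1-s)\,[f_s]^p_{W^{s,p}(\Omega)}$ for $\omega\subset\subset\Omega$, with $C$ independent of $s$, together with a uniform $L^p$ bound; the $W^{1,p}$ membership of the limit then follows from the liminf inequality, as in your Step 2. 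So the step you call ``comparatively routine'' conceals the main compactness mechanism. Finally, note that some uniform $L^p$ (or full-norm) control must be assumed: a bound on the seminorm alone cannot give $L^p$-compactness, as adding constants $c_s\to\infty$ shows. The statement as transcribed omits this, and in the application the authors indeed supply the $L^2$ bound for $u_s\eta_K$ separately before invoking the Proposition.
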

In \cite{bourgain:hal-00747692} the authors used a different notation since in our paper the normalization constant $C(n,s)$ is incorporate in the seminorm $[\cdot]_{H^s}$, in order to obtain a continuity of the norm $\norm{\cdot}{H^s}$ for $s \in (0,1]$.

\subsection{Proof of Theorem \ref{teolimit1}}
\begin{proof}[\unskip\nopunct]
 Let $C$ be an open cone and $C_R$ be a regular cone with section on $S^{n-1}$ of class $C^{1,1}$ such that $C_R\subset C$ and $\partial C_R \cap \partial C = \{0\}$. 
\\\\By monotonicity of the homogeneity degree $\gamma_s(\cdot)$ with respect to the inclusion, we directly obtain $\gamma_s(C)< \gamma_s(C_R)$ and consequently, up to consider a subsequence, we obtain the existence of the following finite limits
\begin{equation}\label{upto}
  \overline{\gamma}(C) = \lim_{s \to 1}\gamma_{s}(C), \quad \mu(C) = \lim_{s \to 1}\frac{C(n,s)}{2s - \gamma_{s}(C)}.
\end{equation}
Since $\gamma_s(C)<2s$, then $\overline{\gamma}(C) \leq 2$ and similarly $\mu(C) \in [0,+\infty)$.\vspace{0.2cm}\\
Let $K\subset \R^n$ be a compact set and consider $x_0 \in K$ and $R>0$ such that $K \subset B_R(x_0)$. Given $\eta\in C^\infty_c(B_2)$, a radial cut off function such that $\eta\equiv 1$ in $B_1$ and $0\leq\eta\leq1$ in $B_2$, consider the rescaled function $\eta_K(x)= \eta(\frac{x-x_0}{R})$ which satisfies $\eta_K \equiv 1$ on $K$.\\
By Proposition \ref{propHs}, we have
$$
[u_s \eta_K]^2_{H^s(B_{2R}(x_0))}\leq [u_s\eta_K]^2_{H^s(\R^n)} \leq M(n,K)\left [\frac{C(n,s)}{2(1-s)} + \frac{C(n,s)}{2s - \gamma_s} \right],
$$
and similarly
\begin{align*}
\norm{u_s \eta_K}{H^s(B_{2R}(x_0))}^2 &\leq \norm{u_s \eta_K}{L^2(\R^n)}^2 + [u_s \eta_K]_{H^s(\R^n)}^2 \\
&\leq M(n,K)\left [\frac{C(n,s)}{2(1-s)} + \frac{C(n,s)}{2s - \gamma_s} +1\right]\\
& \leq M(n,K)\left [\frac{2n}{\omega_{n-1}} + c\mu(C) +1\right].
\end{align*}
By applying Proposition \ref{Brezis} with $\Omega= B_{2R}(x_0)$, we obtain that, up to a subsequence, $u_{s}\eta_K \to \overline{u}\eta_K$ in $L^2(B_{2R}(x_0))$ and
$$
\norm{\overline{u}\eta_K}{H^1(B_{2R}(x_0))}^2 \leq M(n,K)
$$
up to relabeling the constant $M(n,K)$.\\
By construction, since $\eta_K \equiv 1$ on $K$ and $\eta_K \in [0,1]$, we obtain that $u_s \to \overline{u}$ in $L^2(K)$ and similarly
$$
\norm{\overline{u}}{H^1(K)}\leq \norm{\overline{u} \eta_K}{H^1(K)}\leq \norm{\overline{u} \eta_K}{H^1(B_{2R}(x_0))}<\infty,
$$
which gives us the local integrability in $H^1(\R^n)$.\vspace{0.3cm}\\
By Proposition \ref{propcomp} and Corollary \ref{corollmu} we obtain, up to pass to a subsequence, uniform in $s$ bound in $C^{0,\alpha}_{\mathrm{loc}}(C)$ for $(u_{s})$. Then, since we obtain uniform convergence on compact subsets of $C$, the limit must be necessary nontrivial with $||\overline u||_{L^\infty(S^{n-1})}=1$, nonnegative and $\overline\gamma(C)$-homogeneous.\\


Let $\varphi \in C^{\infty}_c(C)$ be a positive smooth function compactly supported such that $\mbox{supp }\varphi\subset B_\rho$, for some $\rho>0$. By definition of the distributional fractional Laplacian
$$
0 = \int_{\R^n}{\varphi (-\Delta)^{s}u_{s} \mathrm{d}x} =  \int_{\R^n}{u_{s} (-\Delta)^{s}\varphi \mathrm{d}x} =\int_{\R^n\setminus B_{\rho}}{u_{s} (-\Delta)^{s}\varphi \mathrm{d}x} + \int_{B_{\rho}}{u_{s} (-\Delta)^{s}\varphi \mathrm{d}x}.
$$
Since
$$
\frac{1}{\abs{x-y}^{n+2s}} = \frac{1}{\abs{x}^{n+2s}}\left(1- (n+2s)\frac{y}{\abs{x}}  \int_{0}^1{\frac{\frac{x}{\abs{x}}-t\frac{y}{\abs{x}} }{\abs{\frac{x}{\abs{x}}-\frac{ty}{\abs{x}}}^{n+2s+2}}\mathrm{d}t}\right),
$$
by definition of the fractional Laplacian for regular functions, it follows
\begin{align*}
\int_{\R^n\setminus B_\rho}{u_s (-\Delta)^s \varphi \mathrm{d}x} =& C(n,s) \int_{\R^n \setminus B_{\rho}}{u_s(x) \int_{\mbox{supp }\varphi}{\frac{-\varphi(y)}{\abs{y-x}^{n+2s}}\mathrm{d}y}\mathrm{d}x}\\
=& C(n,s) \int_{\R^n\setminus B_\rho}{\frac{u_s(x)}{\abs{x}^{n+2s}}\int_{\mbox{supp }\varphi}{-\varphi(y)\mathrm{d}y}\mathrm{d}x}+\\
&+ C(n,s)(n+2s)\int_{\R^n \setminus B_\rho}{\frac{u_s(x)}{\abs{x}^{n+2s+1}}\psi(x)\mathrm{d}x},
\end{align*}
for some $\psi \in L^\infty$. Moreover, since $u_s$ is $\gamma_s(C)$-homogeneous with $\gamma_s(C) <2s$, we have
$$
C(n,s) \int_{\R^n\setminus B_\rho}{\frac{u_s(x)}{\abs{x}^{n+2s}}\mathrm{d}x} = \frac{C(n,s)}{2s-\gamma_s(C)} \rho^{\gamma_s(C) -2s}\int_{S^{n-1}}{u_s(\theta) \mathrm{d}\sigma}
$$
and similarly
$$
C(n,s) \abs{\int_{\R^n\setminus B_\rho}{\frac{u_s(x)}{\abs{x}^{n+2s+1}}\psi(x)\mathrm{d}x}} \leq \frac{C(n,s)\norm{\psi}{L^\infty}}{2s-\gamma_s(C)+1} \rho^{\gamma_s(C) -2s-1}\int_{S^{n-1}}{u_s(\theta) \mathrm{d}\sigma}=o(1).
$$
Hence, for each $s \in (0,1)$
\begin{align*}
\int_{B_{\rho}}{u_{s} (-\Delta)^{s}\varphi \mathrm{d}x} &=
\int_{\R^n\setminus B_{\rho}}{u_{s} (-\Delta)^{s}\varphi \mathrm{d}x}\\
&= C(n,s)\int_{\R^n\setminus B_{\rho}}{u_{s}(x)\int_{\mbox{supp }\varphi}{\frac{\varphi(y)}{\abs{x-y}^{n+2s}}\mathrm{d}y}\mathrm{d}x}\\
&= \frac{C(n,s)}{2s - \gamma_{s}(C)}\int_{\mbox{supp }\varphi}{\varphi(x)\mathrm{d}x}\int_{S_{n-1}}{u_{s}\mathrm{d}\sigma} + o(1)
\end{align*}
and passing through the limit, up to a subsequence, we obtain
\begin{align*}
\int_{B_\rho}{\overline{u}(-\Delta)\varphi\mathrm{d}x} &=  \mu(C)
\int_{S_{n-1}}{\overline{u}\mathrm{d}\sigma}
\int_{\mbox{supp }\varphi}{\varphi(x)\mathrm{d}x}\\
&=\int_{B_\rho}{\left(\mu(C)\int_{S_{n-1}}{\overline{u}\mathrm{d}\sigma} \right)\varphi(x)\mathrm{d}x},
\end{align*}
which implies, integrating by parts, that
$$
-\Delta \overline{u} = \mu(C)\int_{S^{n-1}}{\overline{u}\mathrm{d}\sigma} \quad \mbox{in }\mathcal{D}'(C).
$$
Since the function $\overline{u}$ is $\overline{\gamma}(C)$-homogenous, we get
\begin{equation}\label{limit}
-\Delta_{S^{n-1}}\overline{u} = \overline{\lambda}\overline{u} + \mu(C) \int_{S^{n-1}}{\overline{u}\mathrm{d}\sigma} \quad \mbox{on } S^{n-1}\cap C,
\end{equation}
where $\overline{\lambda} = \overline{\gamma}(C)(\overline{\gamma}(C)+n-2)$ is the eigenvalue associated to the critical exponent $\overline{\gamma}(C)\leq 2$. \\\\
Consider now a nonnegative $\varphi \in H^1_0(S^{n-1}\cap C)\setminus \{0\}$,  strictly positive on $S^{n-1}\cap C$ which achieves \eqref{lambda1}. Then
\begin{equation}\label{test1}
    -\Delta_{S^{n-1}}\varphi = \lambda_1(C) \varphi , \quad\mbox{in }\,\,H^{-1}(S^{n-1}\cap C).
\end{equation}
By testing this equation with $\overline{u}$ and integrating by parts, we obtain
\begin{equation}\label{compare}
\left( \lambda_1(C) - \overline{\lambda}\right) \int_{S^{n-1}}{\overline{u}\varphi \mathrm{d}\sigma} = \mu(C) \int_{S^{n-1}}{\overline{u}\mathrm{d}\sigma}\int_{S^{n-1}}{\varphi\mathrm{d}\sigma}\geq 0
\end{equation}
which implies that in general $\gamma(C) \geq \overline{\gamma}(C)$ and $\gamma(C)=\overline{\gamma}(C)$ if and only if $\mu(C)=0$.
\subsubsection{Wide cones: $\gamma(C)<2$}
By the previous remark we have $\overline{\gamma}(C)<2$ and by definition of $\mu(C)$, it follows $\mu(C)=0$. Since $\varphi$ is the trace on $S^{n-1}$ of an homogenous harmonic function on $C$,  we obtain that $\overline{\gamma}(C)=\gamma(C)$ and $\overline{u}$ is an homogeneous nonnegative harmonic function on $C$ such that $\norm{\overline{u}}{L^\infty(S^{n-1})}=1$.
\subsubsection{Narrow cones: $\gamma(C)\geq2$}
If $\overline{\gamma}(C)< 2$ we have $\mu(C)=0$ and consequently $\lambda_1(C)=\overline{\lambda}$, which is a contradiction since $\gamma(C)\geq 2>\overline{\gamma}(C)$. Hence, if $C$ is a narrow cone we get $\overline{\gamma}(C)=2$. Since $\gamma(C)=2$ is trivial and it follows directly from the previous computations, consider now $\mu_0(C)$ as the minimum defined in \eqref{mu01}, which is well defined and strictly positive since we are focusing on the remaining case $\gamma(C)>2$.
We already remarked that it is achieved by a nonnegative $\psi\in H^1_0(S^{n-1}\cap C)\setminus\{0\}$ which is strictly positive on $S^{n-1}\cap C$ and solution of
\begin{equation*}
-\Delta_{S^{n-1}} \psi=2n\psi+\mu_0(C)\displaystyle\int_{S^{n-1}}\!\!\psi\mathrm{d}\sigma \quad \mathrm{in }\,\,H^{-1}(S^{n-1}\cap C).
\end{equation*}
As we already did in the previous cases, by testing this equation with $\overline{u}$ we obtain $\mu(C)= \mu_0(C)$.\\
By uniqueness of the limits $\overline{\gamma}(C)$ and $\mu(C)$, the result in \eqref{upto} holds for $s\to 1$ and not just up to a subsequence.
\end{proof}
\begin{remark}\label{rem:uniqueness}
The possible obstruction to the existence  of the limit of $u_s$ as $s$ converge s to one lies in the possible lack of uniqueness of  nonnegative solutions to \eqref{limit.equation1} such that $\norm{\overline{u}}{L^\infty(S^{n-1})}=1$. This is the reason why we need to extract subsequences in the asymptotic analysis of Theorem  \ref{teolimit1}.    More precisely, uniqueness of \eqref{lambda1} implies uniqueness of the limit $\overline{u}$ in the case $\gamma(C)\leq 2$ and uniqueness of \eqref{mu01} in the case $\gamma(C)>2$. When $C$ is connected \eqref{lambda1} is attained by a unique normalized nonnegative solution via a standard argument based upon the maximum priciple.  On the other hand,  as we already remarked, when $\gamma(C)>2$, problem \eqref{mu01} always admits  a unique solution. Ultimately, the main obstacle in this analysis is the disconnection of the cone $C$ when $\gamma(C)\leq2$: in this case we cannot always ensure the uniqueness of the solution of the limit problem and even the positivity of the limit function $\overline{u}$ on every connected components of $C$.
\end{remark}
The following example  shows  uniqueness of the limit function $\overline{u}$ due to the nonlocal nature of the fractional Laplacian under a symmetry assumption on the cone $C$.
\begin{Proposition}
Let $C= C_1 \cup\dots\cup C_m$ be a union of disconnected cones such that  $C_1$ is connected and there are orthogonal  maps $\Phi_2,\dots,\Phi_m\in O(n) $ (e.g. reflections about hyperplanes) such that $C_i=\Phi_i(C_1)$ and and $\Phi_i(C)=(C)$ for $i=2,\dots,m$. Let $(u_s)$ be the family of nonnegative solutions to \eqref{Pstheta} such that $\norm{u_s}{L^\infty(S^{n-1})}=1$. Then there exists the limit of $u_s$ as $s\nearrow 1$  in $L^2_{\mathrm{loc}}(\mathbb{R}^n)$ and uniformly on compact subsets of $C$.
\end{Proposition}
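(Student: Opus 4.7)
The plan is to exploit the uniqueness (up to a positive constant) of the normalized $\gamma_s(C)$-homogeneous $s$-harmonic solution of \eqref{Pstheta} to transfer the geometric symmetry of $C$ onto every $u_s$, then pass this symmetry to any subsequential limit $\overline u$, and finally to use the uniqueness of the limit profile on the connected piece $C_1$ together with that symmetry to identify $\overline u$ completely.

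First I would show that $u_s\circ\Phi_i=u_s$ for every $s\in(0,1)$ and every $i\in\{2,\dots,m\}$. Since $\Phi_i\in O(n)$ is an isometry and $\Phi_i(C)=C$, the composition $\widetilde u_s:=u_s\circ\Phi_i$ is again nonnegative, $s$-harmonic in $C$, vanishes outside $C$, is $\gamma_s(C)$-homogeneous, and satisfies $\|\widetilde u_s\|_{L^\infty(S^{n-1})}=1$. The Ba\~nuelos--Bogdan uniqueness recalled in the Introduction therefore forces $\widetilde u_s=u_s$. Along any subsequence $s_k\nearrow 1$ for which Theorem~\ref{teolimit1} produces a limit $\overline u$ in $L^2_{\mathrm{loc}}(\mathbb R^n)$ and uniformly on compact subsets of $C$, this identity passes to the limit in both topologies, yielding $\overline u\circ\Phi_i=\overline u$ for every $i$.

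The central step is to identify $\overline u$. If $\gamma(C)>2$, uniqueness already holds by Remark~\ref{rem:uniqueness}. Assume $\gamma(C)\leq 2$. The variational identity $\lambda_1(C)=\min_i\lambda_1(C_i)$ combined with $\lambda_1(C_i)=\lambda_1(C_1)$ (from the isometry $\Phi_i$) gives $\gamma(C)=\gamma(C_1)$; Theorem~\ref{teolimit1} then yields $\mu(C)=0$ and $\overline\gamma(C)=\gamma(C_1)$, so $\overline u$ is a nonnegative $\gamma(C_1)$-homogeneous harmonic function in $C$. Its restriction to the connected cone $C_1$ is thus a nonnegative $\gamma(C_1)$-homogeneous harmonic function vanishing on $\partial C_1\setminus\{0\}$, and such a profile is unique up to a multiplicative constant by simplicity of the first Dirichlet eigenvalue of $-\Delta_{S^{n-1}}$ on the connected spherical domain $S^{n-1}\cap C_1$. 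The symmetry $\overline u\circ\Phi_i=\overline u$ extends this identification to every component with the \emph{same} multiplicative constant, which is then pinned down by the normalization $\|\overline u\|_{L^\infty(S^{n-1})}=1$. Since every subsequential limit coincides with this explicit function, a standard Urysohn-type argument promotes subsequential convergence to convergence of the entire family $(u_s)$.

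The main obstacle is this identification step: when $C$ is disconnected and $\gamma(C)\leq 2$, the first Dirichlet eigenspace on $S^{n-1}$ carries independent eigenfunctions supported on the various $C_i$, so the limit problem typically admits a multi-parameter family of nonnegative solutions and the subsequential limit need not be unique. The orthogonal symmetry of $C$ is exactly what synchronizes the weights of $\overline u$ across the components and restores a one-dimensional family of admissible limits.
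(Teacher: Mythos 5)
Your argument is correct and follows essentially the same route as the paper: you derive $u_s=u_s\circ\Phi_i$ from the Ba\~nuelos--Bogdan uniqueness of the homogeneous $s$-harmonic profile, pass this symmetry to every subsequential limit given by Theorem \ref{teolimit1}, and then use connectedness of $C_1$ (simplicity of the first Dirichlet eigenvalue on $S^{n-1}\cap C_1$) together with the normalization to identify the limit uniquely, hence to upgrade subsequential to full convergence. The extra detail you give in the wide disconnected case (via $\lambda_1(C)=\lambda_1(C_1)$ and synchronization of the constants across components) is just an expanded version of the paper's closing remark that connectedness of $C_1$ yields uniqueness of the symmetric limit.
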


\begin{proof}
We remark that, for any element of the orthogonal group $\Phi \colon \R^n \to \R^n$,
  \begin{align*}
    (-\Delta)^s \left(u\circ \Phi\right) (x)  =  C(n,s) \mbox{ P.V.}\int_{\R^n}{\frac{u(\Phi(x))-u(y)}{\abs{\Phi(x)-y}^{n+2s}}\mathrm{d}y}
     = (-\Delta)^s u\left(\Phi(x)\right)\;.
   \end{align*}
By the uniqueness result \cite[Theorem 3.2 ]{banuelos} of $s$-harmonic functions on cones, we infer that $u_s \equiv u_s\circ\Phi_i$, for every $i=2,\dots,m$. Therefore, there holds convergence to  $\overline{u}$, where
satisfies $\norm{\overline{u}}{L^\infty(S^{n-1})}=1$, and it is a solution of
\begin{equation}
\begin{cases}\label{limit.equation.sym}
-\Delta\overline u=\mu(C)\displaystyle\int_{S^{n-1}}{\!\!\!\!\overline u \mathrm{d}\sigma}& \mathrm{in} \ C,\\
\overline u \geq0 & \mathrm{in} \ C,\\
\overline u=0 & \mathrm{in} \ \mathbb{R}^n\setminus C\;,
\end{cases}
\end{equation}
 such that $\overline u\equiv \overline u\circ\Phi_i$ for every $i=2,\dots,m$. Finally, connectedness of $C_1$ yields uniqueness of such solution
also for narrow cones.
\end{proof}

\subsection{Proof of Corollary \ref{cor1}}
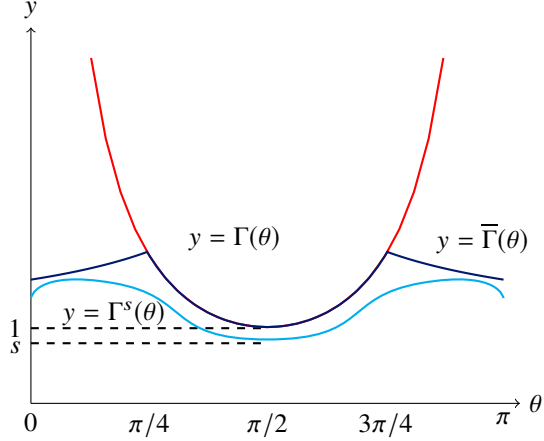
\begin{figure}[ht!]
\begin{tikzpicture}

          \draw[->] (0,0) -- (6.5,0) node[right] {$\theta$};
          \draw[->] (0,0) -- (0,5) node[above] {$y$};
          \draw	(0,0) node[anchor=north] {0}
		(pi/2,0) node[anchor=north] {$\pi/4$}
		(pi,0) node[anchor=north] {$\pi/2$}
		(3*pi/2,0) node[anchor=north] {$3\pi/4$}
		(2*pi,0) node[anchor=north] {$\pi$}
		(0,1) node[anchor=east] {$1$}
		(0,0.75) node[anchor=east] {$s$};
		
	\draw(2.7,2.2) node{$y=\Gamma(\theta)$}
		(1.1,1.2) node{$y=\Gamma^s(\theta)$}
        (6,2.2) node{$y=\overline{\Gamma}(\theta)$};

                   \draw [domain=0:pi,thick, dashed] plot (\x,1);
          \draw [domain=0:pi,thick,dashed] plot (\x,0.8);
          \draw [domain=0.8:2*pi-0.8,red,thick] plot (\x,{-1.85+4.5/\x+4.5/(2*pi-\x)});
         \draw [cyan,thick] plot [smooth, tension=1.2] coordinates { (0,1.4) (1.1,1.6) (pi,0.85) (2*pi-1.1,1.6)  (2*pi,1.4)};
           \draw [domain=pi/2-0.036:2*pi-pi/2+0.036,navyblue,thick=2] plot (\x,{-1.85+4.5/\x+4.5/(2*pi-\x)});
          \draw [domain=0:pi/2-0.036,navyblue,thick=2] plot [smooth] (\x,{0.93+4.5/(2*pi-1.38*\x)});
          \draw [domain=2*pi-pi/2+0.036:2*pi,navyblue,thick=2] plot (\x,{0.93+4.5/(-0.76*pi +1.38*\x)});

         \end{tikzpicture}
         \caption{Values of the limit $\overline{\Gamma}(\theta)=\lim_{s \to 1}\Gamma^s(\theta)$ and $\Gamma(\theta)$, for $n=2$.}
         \end{figure}

\begin{proof}[\unskip\nopunct]
Corollary \ref{cor1} is an easy application of our main Theorem \ref{teolimit1}, since it is a consequence of the Dini's Theorem for a monotone sequence of continuous functions which converges pointwisely to a continuous function on a compact set. In fact, fixed $s\in(0,1)$, the function $\theta\mapsto\gamma_s(\theta)$ is continuous in $[0,\pi)$ with $\gamma_s(0)=2s$ and $\gamma_s(\pi)=0$. Moreover this function is also monotone decreasing in $[0,\pi]$ and since there exists the limit
\begin{equation*}
\lim_{\theta\to\pi^-}\gamma_s(\theta)=\begin{cases}
\frac{2s-1}{2} &\mathrm{if \ }n=2 \ \mathrm{and \ }s>\frac{1}{2},\\
\gamma_s(\pi)=0 & \mathrm{otherwise},
\end{cases}
\end{equation*}
we can extend $\theta\mapsto\gamma_s(\theta)$ to a continuous function in $[0,\pi]$ (see \cite{MR2213639}). Nevertheless, the limit $\overline\gamma(\theta)=\lim_{s\to1}\gamma_s(\theta)=\min\{\gamma(\theta),2\}$ is continuous on $[0,\pi]$ with
\begin{equation*}
\overline\gamma(\pi)=\begin{cases}
\frac{1}{2} &\mathrm{if \ }n=2,\\
0 & \mathrm{otherwise}.
\end{cases}
\end{equation*}
Eventually, for any fixed $\theta\in[0,\pi]$, the function $s\mapsto\gamma_s(\theta)$ is monotone nondecreasing in $(0,1)$. By the Dini's Theorem the convergence is uniform on $[0,\pi]$. This fact obviously implies the uniform convergence
$$\Gamma^s(\theta)=\frac{\gamma_s(\theta)+\gamma_s(\pi-\theta)}{2}\longrightarrow\overline\Gamma(\theta)=\frac{\overline\gamma(\theta)+\overline\gamma(\pi-\theta)}{2}$$
in $[0,\pi]$, and hence
$$\nu_s^{ACF}=\min_{\theta\in[0,\pi]}\Gamma^s(\theta)\longrightarrow\min_{\theta\in[0,\pi]}\overline\Gamma(\theta)=\nu^{ACF}.$$
\end{proof}

\section{Uniform in $s$ estimates in $C^{0,\alpha}$ on annuli}
We have already remarked in Section 2 that, if you take a cone $C=C_\omega$ with $\omega\subset S^{n-1}$ a finite union of connected $C^{1,1}$ domain $\omega_i$, such that $\overline{\omega}_i \cup\overline{ \omega}_j = \emptyset$ for $i\neq j$, by \cite[Lemma 3.3]{MR2213639} we have \eqref{c(s)original}.\\ Hence
solutions $u_s$ to \eqref{Pstheta} are $C^{0,s}(S^{n-1})$ and for any fixed $\alpha\in(0,1)$, any solution $u_s$ with $s\in(\alpha,1)$ is $C^{0,\alpha}(S^{n-1})$; that is, there exists $L_s>0$ such that
\begin{equation*}
\sup_{x,y\in S^{n-1}}\frac{|u_s(x)-u_s(y)|}{|x-y|^\alpha}=L_s.
\end{equation*}
Let us consider an annulus $A=A_{r_1,r_2}=B_{r_2}\setminus \overline{B_{r_1}}$ with $0<r_1<r_2<+\infty$. We have the following result.
\begin{Lemma}\label{lemmanello}
Let $\alpha\in(0,1)$, $s_0\in(\max\{1/2,\alpha\},1)$ and $A$ an annulus centered at zero. Then there exists a constant $c>0$ such that any solution $u_s$ to \eqref{Pstheta} with $s\in[s_0,1)$ satisfies
\begin{equation*}
\sup_{x,y\in A}\frac{|u_s(x)-u_s(y)|}{|x-y|^\alpha}\leq cL_s.
\end{equation*}
\end{Lemma}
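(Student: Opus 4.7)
The plan is to establish the estimate by a direct polar decomposition of the homogeneous function $u_s$, which reduces the H\"older bound on $A$ to the given spherical H\"older bound together with elementary control on the radial profile $r\mapsto r^{\gamma_s}$. For $x,y\in A$, I would write $x=r\theta$ and $y=\rho\phi$ with $r,\rho\in[r_1,r_2]$ and $\theta,\phi\in S^{n-1}$, and exploit $\gamma_s$-homogeneity to split
\begin{equation*}
u_s(x)-u_s(y)=r^{\gamma_s}\bigl(u_s(\theta)-u_s(\phi)\bigr)+\bigl(r^{\gamma_s}-\rho^{\gamma_s}\bigr)u_s(\phi),
\end{equation*}
and estimate the two contributions separately.

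For the angular piece I would use the elementary inequality $|\theta-\phi|=\bigl|x/|x|-y/|y|\bigr|\le 2|x-y|/\min(|x|,|y|)\le (2/r_1)|x-y|$, together with the spherical bound $|u_s(\theta)-u_s(\phi)|\le L_s|\theta-\phi|^\alpha$. Since $\gamma_s\in(0,2s)\subset(0,2)$ for $s\in[s_0,1)$, the prefactor $r^{\gamma_s}\le r_2^{\gamma_s}$ is uniformly bounded in $s$, yielding $r^{\gamma_s}|u_s(\theta)-u_s(\phi)|\le c_1 L_s|x-y|^\alpha$ with $c_1=c_1(r_1,r_2,\alpha)$.

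For the radial piece, the mean value theorem gives $|r^{\gamma_s}-\rho^{\gamma_s}|\le \gamma_s\,\xi^{\gamma_s-1}|r-\rho|$ for some $\xi\in[r_1,r_2]$; the coefficient $\gamma_s\,\xi^{\gamma_s-1}$ is uniformly bounded by a constant depending only on $r_1,r_2$, since $\gamma_s\in(0,2)$ excludes degeneration at either endpoint. Combining this with $|r-\rho|\le|x-y|$, the trivial estimate $|x-y|\le(2r_2)^{1-\alpha}|x-y|^\alpha$ valid on the bounded annulus, and $|u_s(\phi)|\le\|u_s\|_{L^\infty(S^{n-1})}=1$, I would obtain $|r^{\gamma_s}-\rho^{\gamma_s}||u_s(\phi)|\le c_2|x-y|^\alpha$ with $c_2=c_2(r_1,r_2,\alpha)$ independent of $s$.

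The only delicate point, and essentially the main obstacle, is to absorb the bare-constant bound on the radial piece into $cL_s$ rather than leaving it as a raw constant. For this I would exploit the normalization $\|u_s\|_{L^\infty(S^{n-1})}=1$ together with the fact that $u_s\equiv 0$ on the nonempty set $S^{n-1}\setminus\omega$: choosing $\theta^*\in\omega$ with $u_s(\theta^*)=1$ and any $\theta'\in S^{n-1}\setminus\omega$, the H\"older inequality on the sphere yields $1=|u_s(\theta^*)-u_s(\theta')|\le L_s|\theta^*-\theta'|^\alpha\le 2^\alpha L_s$, hence the uniform lower bound $L_s\ge 2^{-\alpha}$. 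This allows the replacement $\|u_s\|_{L^\infty(S^{n-1})}\le 2^\alpha L_s$ in the radial estimate and produces the claimed inequality with $c=c_1+2^\alpha c_2$ independent of $s\in[s_0,1)$.
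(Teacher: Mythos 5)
Your proof is correct, and its overall strategy coincides with the paper's: exploit the $\gamma_s$-homogeneity to separate an angular contribution, controlled by the spherical seminorm $L_s$, from a radial contribution, controlled by the uniform-in-$s$ regularity of $t\mapsto t^{\gamma_s}$ on $[r_1,r_2]$ together with the normalization $\|u_s\|_{L^\infty(S^{n-1})}=1$ and $\gamma_s\in(0,2)$. The technical execution differs slightly: the paper first scales the spherical estimate to every sphere $S^{n-1}_r$ (getting $L_s r^{\gamma_s-\alpha}\le cL_s$) and then, for points on different spheres, inserts the intermediate point $z$ on the ray through $x$ at radius $|y|$, using the obtuse angle $\widehat{xzy}$ to get $|z-y|\le|x-y|$ and the $\alpha$-H\"older continuity of $t\mapsto t^{\gamma_s}$ for the radial increment; you instead split algebraically $u_s(x)-u_s(y)=r^{\gamma_s}(u_s(\theta)-u_s(\phi))+(r^{\gamma_s}-\rho^{\gamma_s})u_s(\phi)$, use the projection inequality $|x/|x|-y/|y||\le 2|x-y|/r_1$, and handle the radial factor by the mean value theorem plus $|x-y|\le(2r_2)^{1-\alpha}|x-y|^\alpha$. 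These are equivalent in substance. The genuine added value of your write-up is the last step: you correctly observe that the radial term only yields a raw constant times $|x-y|^\alpha$, and you absorb it into $cL_s$ by proving the uniform lower bound $L_s\ge 2^{-\alpha}$ (the maximum $1$ of the continuous function $u_s$ on $S^{n-1}$ and a point of $S^{n-1}\setminus C$, at mutual distance at most $2$). The paper's proof states the final bound $\le cL_s|x-y|^\alpha$ without making this absorption explicit, so your argument closes a small gap that the published proof leaves implicit; note only that in the application to Theorem \ref{thm:holder} one has $L_{s_k}\to\infty$, so there the distinction is harmless.
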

\proof
First of all we remark that
\begin{equation}\label{alphasph}
\sup_{x,y\in S^{n-1}_r}\frac{|u_s(x)-u_s(y)|}{|x-y|^\alpha}\leq cL_s,
\end{equation}
for any $r\in(r_1,r_2)$. In fact, by the $\gamma_s$-homogeneity of our solutions, we have
\begin{equation*}
\sup_{x,y\in S^{n-1}_r}\frac{|u_s(x)-u_s(y)|}{|x-y|^\alpha}= L_sr^{\gamma_s-\alpha},
\end{equation*}
and since $(2s_0-1)/2\leq\gamma_s(C)<2$ for any $s\in[s_0,1)$ by the inclusion $C\subset\mathbb{R}^n\setminus\{\mathrm{half-line \ from \ }0\}$, we obtain \eqref{alphasph}.\\\\
Now we can show what happens considering $x,y\in A$ which are not on the same sphere. We can suppose without loss of generality that $x\in S^{n-1}_{R}$, $y\in S^{n-1}_{r}$ with $r_1<r<R<r_2$. Hence let us take the point $z$ obtained by the intersection between $S^{n-1}_{r}$ and the half-line connecting $0$ and $x$ ($z$ may be $y$ itself). Hence
\begin{eqnarray*}
|u_s(x)-u_s(y)|&\leq&|u_s(x)-u_s(z)|+|u_s(z)-u_s(y)|\nonumber\\
&\leq&u_s(x/|x|)||x|^{\gamma_s}-|z|^{\gamma_s}|+cL_s|z-y|^\alpha\nonumber\\
&\leq&cL_s|x-y|^\alpha.
\end{eqnarray*}
In fact we remark that $||u_s||_{L^\infty(S^{n-1})}=1$. Moreover, since the angle $\beta=\widehat{xzy}\in(\pi/2,\pi]$, obviously $|z-y|^\alpha\leq|x-y|^\alpha$. Moreover by the $\alpha$-H\"older continuity of $t\mapsto t^{\gamma_s}$ in $(r_1,r_2)$ and the bounds $(2s_0-1)/2\leq\gamma_s(C)<2$, one can find a universal constant $c>0$ such that\begin{equation*}
||x|^{\gamma_s}-|z|^{\gamma_s}|\leq c||x|-|z||^\alpha\leq c|x-z|^\alpha\leq c|x-y|^\alpha,
\end{equation*}
where the last inequality holds since $z$ is the point on $S^{n-1}_{r}$ which minimizes the distance $\mbox{dist}(x,S^{n-1}_{r})$.
\endproof
\subsection{Proof of Theorem \ref{thm:holder}.}
\begin{proof}[\unskip\nopunct]
Seeking a contradiction,
\begin{equation}\label{contra}
\max_{x,y\in S^{n-1}}\frac{|u_{s_k}(x)-u_{s_k}(y)|}{|x-y|^\alpha}=L_{s_k}=L_k\to+\infty,\qquad\mathrm{as} \ s_k\to1.
\end{equation}
We can consider the sequence of points $x_k,y_k\in S^{n-1}$ which realizes $L_k$ at any step. It is easy to see that this couple belongs to $\overline{C}\cap S^{n-1}$. Moreover we can always think $x_k$ as the one closer to the boundary $\partial C\cap S^{n-1}$. Therefore, to have \eqref{contra}, we have $r_k=|x_k-y_k|\to0$. Hence, without loss of generality, we can assume that $x_k,y_k$ belong defenetively to the same connected component of $C$ and
\begin{equation*}
\frac{|u_{s_k}(y_k)-u_{s_k}(x_k)|}{r_k^\alpha}=L_k,\qquad\frac{y_k-x_k}{r_k}\to e_1.
\end{equation*}
Let us define
\begin{equation*}
u^k(x)=\frac{u_{s_k}(x_k+r_kx)-u_{s_k}(x_k)}{r_k^\alpha L_k},\qquad x\in\Omega_k=\frac{C-x_k}{r_k}.
\end{equation*}
We remark that $u^k(0)=0$ and $u^k((y_k-x_k)/r_k)=1$.\\\\
Moreover we can have two different situations.
\begin{itemize}
\item[$\mathbf{Case \ 1:}$] If
$$\frac{r_k}{\mbox{dist}(x_k,\partial C)}\to0,$$
then the limit of $\Omega_k$ is $\mathbb{R}^n$.
\item[$\mathbf{Case \ 2:}$] If
$$\frac{r_k}{\mbox{dist}(x_k,\partial C)}\to l\in(0,+\infty],$$
then the limit of $\Omega_k$ is an half-space $\mathbb{R}^n\cap\{x_1>0\}$.
\end{itemize}
In any case let us define $\Omega_\infty$ this limit set. Let us consider the annulus $A^*:=B_{3/2}\setminus\overline {B_{1/2}}$. By Lemma \ref{lemmanello} and the definition of $u^k$, we obtain, for any $k$,
\begin{equation}
\sup_{x,y\in A^*_k}\frac{|u^k(x)-u^k(y)|}{|x-y|^\alpha}\leq c,
\end{equation}
where $A^*_k:=\frac{A^*-x_k}{r_k}\to\mathbb{R}^n$ and the constant $c>0$ depends only on $\alpha$ and $A^*$. Let us consider a compact subset $K$ of $\Omega_\infty$. Since for $k$ large enough $K\subset A^*_k$, functions $u^k$ are $C^{0,\alpha}(K)$ uniformly in $k$. This is due also to the fact that they are uniformly in $L^\infty(K)$, since $|u^k(x)-u^k(0)|\leq c|x|^\alpha$ on $K$. Hence $u^k\to\overline u$ uniformly on compact subsets of $\Omega_\infty$. Moreover $\overline u$ is globally $\alpha$-H\"older continuous and it is not constant, since $\overline u(e_1)-\overline u(0)=1$. To conclude, we will show that $\overline u$ is harmonic in the limit domain $\Omega_\infty$; that is, for any $\phi\in C^\infty_c(\Omega_\infty)$
\begin{equation*}
\int_{\Omega_\infty}\phi(-\Delta)\overline u \mathrm{d}x=0,
\end{equation*}
and this fact will be a contradiction with the global H\"older continuity. In fact we can apply Corollary 2.3 in \cite{MR2599456}, if  $\Omega_\infty=\mathbb{R}^n$ directly on the function $\overline u$ and if $\Omega_\infty=\mathbb{R}^n\cap\{x_1>0\}$, since $\overline u=0$ in $\partial\Omega_\infty$, we can use the same result over its odd reflection. Hence we want to prove
\begin{equation*}
\int_{\Omega_\infty}\phi(-\Delta)\overline u\mathrm{d}x=\int_{\Omega_\infty}\overline u(-\Delta)\phi \mathrm{d}x=\lim_{k\to+\infty}\int_{B_R}u^k(-\Delta)^{s_k}\phi \mathrm{d}x=0,
\end{equation*}
where $B_R$ contains the support of $\phi$ and the second equality holds by the uniform convergences  $u^k\to\overline u$ and $(-\Delta)^{s_k}\phi\to(-\Delta)\phi$ on compact subsets of $\Omega_\infty$, since $\phi$ is a smooth function compactly supported. Moreover, since $u^k$ is $s_k$-harmonic on $\Omega_k$, and for $k$ large enough the support of $\phi$ is contained in this domain, we have
\begin{equation*}
\int_{\mathbb{R}^n}u^k(-\Delta)^{s_k}\phi\mathrm{d}x=\int_{\mathbb{R}^n}{\phi(-\Delta)^{s_k}u^k \mathrm{d}x}=0.
\end{equation*}
In order to conclude we want
\begin{equation*}
\lim_{k\to+\infty}\int_{\mathbb{R}^n\setminus B_R}u^k(-\Delta)^{s_k}\phi \mathrm{d}x=0.
\end{equation*}
Hence, defining $\eta=x_k+r_kx$ and using Remark \ref{labogdancns}, we obtain
\begin{equation*}
\left|\int_{\mathbb{R}^n\setminus B_R}u^k(-\Delta)^{s_k}\phi\mathrm{d}x\right|\leq\frac{C(n,s_k)}{L_k}r_k^{2s_k-\alpha}\int_{|\eta-x_k|>Rr_k}\frac{|u_{s_k}(\eta)-u_{s_k}(x_k)|}{|\eta-x_k|^{n+2s_k}}\mathrm{d}\eta.
\end{equation*}
For $k$ large enough, we notice that we can choose $\varepsilon>0$ such that the set $\{\eta\in \R^n \ : \ Rr_k<|\eta-x_k|<\varepsilon\}$ is contained in $A^*$. So, we can split the integral obtaining\vspace{0.3cm}
$$
\vspace{0.3cm}\int_{|\eta-x_k|>Rr_k}\!\!\frac{|u_{s_k}(\eta)-u_{s_k}(x_k)|}{|\eta-x_k|^{n+2s_k}}\mathrm{d}\eta \leq \int_{R r_k<|\eta-x_k|<\varepsilon}\!\!\frac{|u_{s_k}(\eta)-u_{s_k}(x_k)|}{|\eta-x_k|^{n+2s_k}}\mathrm{d}\eta +\int_{|\eta-x_k|>\varepsilon}\!\!\frac{|u_{s_k}(\eta)-u_{s_k}(x_k)|}{|\eta-x_k|^{n+2s_k}}\mathrm{d}\eta $$
where we have\vspace{0.2cm}
\begin{align*}
\frac{C(n,s_k)r_k^{2s_k-\alpha}}{L_k}\int_{R r_k<|\eta-x_k|<\varepsilon}\!\!\frac{|u_{s_k}(\eta)-u_{s_k}(x_k)|}{|\eta-x_k|^{n+2s_k}}\mathrm{d}\eta &\leq C(n,s_k)r_k^{2s_k-\alpha}c\omega_{n-1}\int_{Rr_k}^{\varepsilon}t^{-1+\alpha-2s_k}\mathrm{d}t\\
=&\frac{C(n,s_k)c\omega_{n-1}}{2s_k-\alpha}\left(R^{\alpha-2s_k}-\frac{r^{2s_k-\alpha}_k}{\varepsilon^{2s_k-\alpha}}\right)
\end{align*}
and similarly
\begin{align*}
\frac{C(n,s_k)r_k^{2s_k-\alpha}}{L_k}\int_{|\eta-x_k|>\varepsilon}\!\!\frac{|u_{s_k}(\eta)-u_{s_k}(x_k)|}{|\eta-x_k|^{n+2s_k}}\mathrm{d}\eta &\leq \frac{C(n,s_k)r_k^{2s_k-\alpha}c\omega_{n-1}}{L_k}\int_{\varepsilon}^\infty \frac{(1+t)^{\gamma_{s_k}}}{{t}^{1+2s_k}}\mathrm{d}t\\
=&\frac{C(n,s_k)r_k^{2s_k-\alpha}c\omega_{n-1}}{L_k}\left(1+\frac{\varepsilon^{\gamma_{s_k}-2s_k}}{2s_k-\gamma_{s_k}}\right).
\end{align*}
Finally, recalling that $r_k\to0$, $C(n,s_k)\to0$, $L_k\to \infty$ and  $2s_k-\alpha>0$ taking $s_0>1/2$, we obtain
$$
\left|\int_{\mathbb{R}^n\setminus B_R}u^k(-\Delta)^{s_k}\phi\mathrm{d}x\right|\leq \left(C(n,s_k)+ \frac{C(n,s_k)}{2s_k-\gamma_{s_k}}\frac{r_n^{2s_k-\alpha}}{L_k}\right)M
$$
which converges to zero as we claimed, since
$$\frac{C(n,s_k)}{2s_k-\gamma_{s_k}(C)}\to\mu(C)\in[0,+\infty)$$
in any regular cone $C\subset \R^n$.
\end{proof}

\bibliography{Biblio}

\begin{thebibliography}{10}

\bibitem{MR3039830}
M.~Allen.
\newblock Separation of a lower dimensional free boundary in a two-phase
  problem.
\newblock {\em Math. Res. Lett.}, 19(5):1055--1074, 2012.

\bibitem{AlCh2015}
M.~Allen and H.~C. Lara.
\newblock Free boundaries on two-dimensional cones.
\newblock {\em J. Geom. Anal.}, 25(3):1547--1575, 2015.

\bibitem{MR732100}
H.~W. Alt, L.~A. Caffarelli, and A.~Friedman.
\newblock Variational problems with two phases and their free boundaries.
\newblock {\em Trans. Amer. Math. Soc.}, 282(2):431--461, 1984.

\bibitem{banuelos}
R.~Banuelos and K.~Bogdan.
\newblock Symmetric stable processes in cones.
\newblock {\em Potential Analysis}, 21(3):263--288, 2004.

\bibitem{BaFiRo}
B.~Barrios, A.~Figalli, and X.~Ros-Oton.
\newblock Global regularity for the free boundary in the obstacle problem for
  the fractional laplacian.
\newblock {\em preprint}, to appear, 2015.

\bibitem{MR1671973}
K.~Bogdan and T.~Byczkowski.
\newblock Potential theory for the {$\alpha$}-stable {S}chr\"odinger operator
  on bounded {L}ipschitz domains.
\newblock {\em Studia Math.}, 133(1):53--92, 1999.

\bibitem{Bogdan.narrow}
K.~Bogdan, B.~Siudeja, and A.~St\'{o}s.
\newblock Martin kernel for fractional laplacian in narrow cones.
\newblock {\em Potential Analysis}, 42(4):839--859, 2015.

\bibitem{bourgain:hal-00747692}
J.~Bourgain, H.~Brezis, and P.~Mironescu.
\newblock {Another look at Sobolev spaces}, 2001.
\newblock Original research article appeared at in Optimal Control and Partial
  Differential Equations IOS Press ISBN 1 58603 096 5.

\bibitem{CafSilvaSavin}
L.~Caffarelli, D.~De~Silva, and O.~Savin.
\newblock The two membranes problem for different operators.
\newblock {\em Ann. Inst. H. Poincar\'e Anal. Non Lin\'eaire}, 34(4):899--932,
  2017.

\bibitem{MR2145284}
L.~Caffarelli and S.~Salsa.
\newblock {\em A geometric approach to free boundary problems}, volume~68 of
  {\em Graduate Studies in Mathematics}.
\newblock American Mathematical Society, Providence, RI, 2005.

\bibitem{MR2354493}
L.~Caffarelli and L.~Silvestre.
\newblock An extension problem related to the fractional {L}aplacian.
\newblock {\em Comm. Partial Differential Equations}, 32(7-9):1245--1260, 2007.

\bibitem{MR2494809}
L.~Caffarelli and L.~Silvestre.
\newblock Regularity theory for fully nonlinear integro-differential equations.
\newblock {\em Comm. Pure Appl. Math.}, 62(5):597--638, 2009.

\bibitem{MR2393430}
L.~A. Caffarelli and F.-H. Lin.
\newblock Singularly perturbed elliptic systems and multi-valued harmonic
  functions with free boundaries.
\newblock {\em J. Amer. Math. Soc.}, 21(3):847--862, 2008.

\bibitem{ChenSong}
Z.~Chen and R.~Song.
\newblock Estimates on {G}reen functions and {P}oisson kernels for symmetric
  stable processes.
\newblock {\em Mathematische Annalen}, 312(3):465--501, 1998.

\bibitem{MR2146353}
M.~Conti, S.~Terracini, and G.~Verzini.
\newblock Asymptotic estimates for the spatial segregation of competitive
  systems.
\newblock {\em Adv. Math.}, 195(2):524--560, 2005.

\bibitem{MR2863857}
E.~N. Dancer, K.~Wang, and Z.~Zhang.
\newblock The limit equation for the {G}ross-{P}itaevskii equations and {S}.
  {T}erracini's conjecture.
\newblock {\em J. Funct. Anal.}, 262(3):1087--1131, 2012.

\bibitem{concentration}
J.~D\'avila, M.~del Pino, S.~Dipierro, and E.~Valdinoci.
\newblock Concentration phenomena for the nonlocal {S}chr\"odinger equation
  with {D}irichlet datum.
\newblock {\em Anal. PDE}, 8(5):1165--1235, 2015.

\bibitem{guide}
E.~Di~Nezza, G.~Palatucci, and E.~Valdinoci.
\newblock Hitchhiker's guide to the fractional {S}obolev spaces.
\newblock {\em Bull. Sci. Math.}, 136(5):521--573, 2012.

\bibitem{newdefinition}
S.~Dipierro, O.~Savin, and E.~Valdinoci.
\newblock Definition of fractional laplacian for functions with polynomial
  growth.
\newblock {\em Rev. Mat. Iberoam}.

\bibitem{allfunctions}
S.~Dipierro, O.~Savin, and E.~Valdinoci.
\newblock All functions are locally {$s$}-harmonic up to a small error.
\newblock {\em J. Eur. Math. Soc. (JEMS)}, 19(4):957--966, 2017.

\bibitem{MR0412442}
S.~Friedland and W.~K. Hayman.
\newblock Eigenvalue inequalities for the {D}irichlet problem on spheres and
  the growth of subharmonic functions.
\newblock {\em Comment. Math. Helv.}, 51(2):133--161, 1976.

\bibitem{GaRo}
N.~Garofalo and X.~Ros-Oton.
\newblock Structure and regularity of the singular set in the obstacle problem
  for the fractional laplacian.
\newblock {\em preprint}, to appear, 2017.

\bibitem{MR2213639}
K.~Michalik.
\newblock Sharp estimates of the {G}reen function, the {P}oisson kernel and the
  {M}artin kernel of cones for symmetric stable processes.
\newblock {\em Hiroshima Math. J.}, 36(1):1--21, 2006.

\bibitem{MR2599456}
B.~Noris, H.~Tavares, S.~Terracini, and G.~Verzini.
\newblock Uniform {H}\"older bounds for nonlinear {S}chr\"odinger systems with
  strong competition.
\newblock {\em Comm. Pure Appl. Math.}, 63(3):267--302, 2010.

\bibitem{spectralLs}
A.~R\"uland.
\newblock Unique continuation for fractional {S}chr\"odinger equations with
  rough potentials.
\newblock {\em Communications in Partial Differential Equations},
  40(1):77--114, 2015.

\bibitem{Sha2004}
H.~Shahgholian.
\newblock When does the free boundary enter into corner points of the fixed
  boundary?
\newblock {\em Zap. Nauchn. Sem. S.-Peterburg. Otdel. Mat. Inst. Steklov.
  (POMI)}, 310(Kraev. Zadachi Mat. Fiz. i Smezh. Vopr. Teor. Funkts. 35
  [34]):213--225, 229, 2004.

\bibitem{silvestrethesis}
L.~E. Silvestre.
\newblock {\em Regularity of the obstacle problem for a fractional power of the
  {L}aplace operator}.
\newblock ProQuest LLC, Ann Arbor, MI, 2005.
\newblock Thesis (Ph.D.)--The University of Texas at Austin.

\bibitem{MR2984134}
H.~Tavares and S.~Terracini.
\newblock Regularity of the nodal set of segregated critical configurations
  under a weak reflection law.
\newblock {\em Calc. Var. Partial Differential Equations}, 45(3-4):273--317,
  2012.

\bibitem{tvz2}
S.~Terracini, G.~Verzini, and A.~Zilio.
\newblock Uniform {H}\"older regularity with small exponent in
  competition-fractional diffusion systems.
\newblock {\em Discrete and Continuous Dynamical Systems- Series A},
  34(6):2669--2691, 2014.

\bibitem{tvz1}
S.~Terracini, G.~Verzini, and A.~Zilio.
\newblock Uniform {H}\"older bounds for strongly competing systems involving
  the square root of the laplacian.
\newblock {\em Journal of the European Mathematical Society},
  18(12):2865--2924, 2016.

\bibitem{2016arXiv160403264T}
S.~Terracini and S.~Vita.
\newblock On the asymptotic growth of positive solutions to a nonlocal elliptic
  blow-up system involving strong competition.
\newblock {\em Annales de l'Institut Henri Poincare (C) Non Linear Analysis},
  pages~--, 2017.

\bibitem{ww}
K.~Wang and J.~Wei.
\newblock On the uniqueness of solutions of a nonlocal elliptic system.
\newblock {\em Mathematische Annalen}, 365(1-2):105--153, 2016.

\end{thebibliography}
\bibliographystyle{abbrv}

\end{document}